\newcommand{\ds}{\displaystyle }
\newcommand{\vc}[1]{{\boldsymbol #1}} 
\newcommand{\vcn}[1]{{\bf #1}}
\newcommand{\sr}[1]{{\mathcal #1}}
\newcommand{\dd}[1]{\mathbb{#1}}
\newcommand{\eq}[1]{(\ref{eq:#1})}
\newcommand{\lem}[1]{Lemma~\ref{lem:#1}}
\newcommand{\cor}[1]{Corollary~\ref{cor:#1}}
\newcommand{\thr}[1]{Theorem~\ref{thr:#1}}
\newcommand{\rem}[1]{Remark~\ref{rem:#1}}
\newcommand{\app}[1]{Appendix~\ref{app:#1}}
\newcommand{\sectn}[1]{Section~\ref{sec:#1}}
\newcommand{\lemt}[1]{\ref{lem:#1}}
\newcommand{\sect}[1]{\ref{sec:#1}}
\newcommand{\pend}{\hfill \thicklines \framebox(6.6,6.6)[l]{}}
\newenvironment{proof*}[1]{\noindent {\sc  #1} \rm}{\pend}
\newtheorem{theorem}{Theorem}[section]
\newtheorem{lemma}{Lemma}[section]
\newtheorem{remark}{Remark}[section]
\newtheorem{corollary}{Corollary}[section]
\newenvironment{mylist}[1]{\begin{list}{}
{\setlength{\itemindent}{#1mm}}
{\setlength{\itemsep}{0ex plus 0.2ex}}
{\setlength{\parsep}{0.5ex plus 0.2ex}}
{\setlength{\labelwidth}{10mm}}
}{\end{list}}
\newcommand{\setnewcounter} {
\setcounter{subsection}{0}
\setcounter{equation}{0}
\setcounter{figure}{0}
\setcounter{conjecture}{0}
\setcounter{assumption}{0}
\setcounter{question}{0}
\setcounter{definition}{0}
\setcounter{theorem}{0}
\setcounter{corollary}{0}
\setcounter{lemma}{0}
\setcounter{proposition}{0}
\setcounter{remark}{0}
}
\begin{document}
 \title{\Large \bf Heavy traffic limit of stationary distribution of\\ the multi-level single server queue}

%%%%%%%%%%%%%%
%   AUTHORS  %
%%%%%%%%%%%%%%
\author{Masahiro Kobayashi\\ Tokai University \and Masakiyo Miyazawa\\ Tokyo University of Science \and Yutaka Sakuma \\National Defense Academy of Japan}
\date{\today}

\maketitle

\begin{abstract}

\citet{AtarMiya2025} recently introduced a single server queue with queue length dependent arrival and service processes, and name it a multi-level queue. They prove that the heavy traffic limit of its queue length process weakly converges to a reflected diffusion with discontinuously state-dependent drift and deviations.

We derive the heavy traffic limit of the stationary queue length distribution of this multi-level queue in a closed form, which agrees with the stationary distribution of the reflected diffusion obtained by \citet{Miya2024b}. Thus, those results show the limit interchange of process and stationary distribution in heavy traffic.

The multi-level queue of \cite{AtarMiya2025} is a simpler version of the 2-level $GI/G/1$ queue of \citet{Miya2025} and its extension for multi-levels. For this 2-level queue in heavy traffic, the process limit is unknown, and the distributional limit is obtained for limited cases under extra conditions. Nevertheless, it is shown that the method developed in \cite{Miya2025} perfectly works for the present multi-level queue.

\end{abstract}

\keyword{Keywords:}{single server queue, level dependent arrival and service, stationary distribution, heavy traffic, diffusion approximation, Palm distribution.}

%\footnotetext[1]{Department of Information Sciences, Tokyo University of Science, Noda, Chiba, Japan.}

\section{Introduction}
\label{sec:introduction}
\setnewcounter

Recently, \citet{AtarMiya2025} introduced a multi-level queue, and prove that the heavy traffic limit of its queue length process is a reflected diffusion with discontinuously state dependent drift and deviations. This multi-level queue is a simpler version of the 2-level $GI/G/1$ queue studied in \cite{Miya2025}, and has a single server and finitely many levels which constitute a partition of $\dd{R}_{+} \equiv [0,\infty)$ of intervals, and service speed depends on the level to which the queue length belongs. These assumptions are exactly the same as the 2-level $GI/G/1$ queue. However, their arrival processes are different. In the multi-level queue of \cite{AtarMiya2025}, the arrival times of customers are generated by a single renewal process whose time scale is changed similarly to the service speed, while different renewal processes are chosen similarly depending on the queue length in the 2-level $GI/G/1$ queue.

We consider the multi-level queue of \cite{AtarMiya2025}, but focus on the heavy traffic limit of the stationary distribution of its queue length, which is not addressed in \cite{AtarMiya2025} but important in application. Our interest is in the existence of this distributional limit and its closed form expression. This existence follows from the tightness of the sequence of the stationary distributions of the diffusion scaled queue length processes and \lem{th1} in \sectn{concluding} (see \cite{Bill2000} for the definition of the tightness). Furthermore, also by \lem{th1}, if the distributional limit exists, it is identical with the stationary distribution of the reflected diffusion of \cite{AtarMiya2025} by the result of \cite{Miya2024b} (see \rem{MLQ-main 2} for details).

Thus, the tightness is a key to prove the existence of the distributional limit of our interest. There are two approaches to prove this tightness in general. One is to use the corresponding process limit and a similar result to \lem{th1} when they are available. This line of this approach is well developed in the literature (e.g., see \cite{BudhLee2009}), and used to show the so called limit interchange, meaning that process and distributional limits are interchangeable. However, this approach may be hard to apply to queues with state-dependent arrivals and services as will be discussed in \sectn{concluding}. Another is to use the stationary equation or directly compute the stationary distribution. This approach is applicable when the process limit is not available, but needs to show the uniqueness of the distributional limit in addition to the tightness. Its typical approach is to consider the sequence of the stationary equations, each of which is called a basic adjoint relationship, BAR for short. This BAR approach has been successfully employed in queues and their networks with arrivals and services not depending on queue lengths (e.g., see \cite{BravDaiMiya2017,BravDaiMiya2024,Miya2017}).

In this paper, we employ the BAR approach, particularly following it developed for the 2-level $GI/G/1$ queue in \cite{Miya2025}. For this  $GI/G/1$ queue, the process limit is not available, and the heavy traffic limit of the stationary distributions is obtained in limited cases under some extra conditions (see Theorem 3.1 of \cite{Miya2025}). Contrary to this situation, the process limit is available for the multi-level queue. However, we prove the tightness and compute the distributional limit, not using any information on the corresponding process limit of \cite{AtarMiya2025} and the stationary distribution of \cite{Miya2024b}. These results are given in \thr{MLQ-main}, which is a main result of this paper. In particular, the limit of the stationary distribution for $K=2$ exactly corresponds to that in (ii) of Theorem 3.1 of \cite{Miya2025} (see \rem{MLQ-main 1}).

Thus, as for the heavy traffic limit of the stationary distribution, the BAR approach nicely works for the multi-level queue, but not fully for the 2-level $GI/G/1$ queue. From this observation, it is interesting to see whether the BAR approach of this paper is useful for more complicated queues including their networks which have state-dependent time changes for arrivals and services. They may be an interesting topic in future research.

This paper is made up by five sections. In \sectn{preliminaries}, we introduce the sequence of indexed multi-level queues together with notations, present the heavy traffic assumptions, and discuss a stationary framework including Palm distributions. Main result, \thr{MLQ-main}, is presented in \sectn{main}, and proved in \sectn{proofs}. Finally, the approach to prove the tightness not using the BAR is discussed in \sectn{concluding} with help of \app{lem-th1}.

\section{Preliminaries and assumptions}
\label{sec:preliminaries}
\setnewcounter

In this section, we present preliminary results and assumptions in four subsections. In Sections \sect{multi-level} and \sect{time-evolution}, we introduce the multi-level queue and describe its time evolution. We then present heavy traffic conditions for diffusion approximation in \sectn{HT-a}.

In those three subsections, we follow the notation system of \cite{AtarMiya2025} in principle. We also use standard notations such as $a \vee b = \max(a,b)$ and $a \wedge b = \min(a,b)$ for $a,b \in \dd{R}$, where $\dd{R}$ is the set of all real numbers. As usual convention, all processes are assumed to be right-continuous with left-limits unless stated otherwise.

However, there are two differences in our notations from those in \cite{AtarMiya2025}. First, we do not need the time scaling for diffusion approximation because we only consider the stationary distributions. Because of this, we do not include time scaling in system parameters as in \cite{AtarMiya2025} except for \sectn{concluding}, where process limits are considered. Secondly, we need a Markov process for describing the multi-level queue, which is not the case there. This Markov process will be used to characterize the stationary distribution. We denote the Markov and its queue length process by $X(\cdot)$ and $L(\cdot)$, respectively, while $X(\cdot)$ is used for the queue length process in \cite{AtarMiya2025}. These differences in mind, we introduce notations.

Finally, in \sectn{stationary}, we discuss the stationary framework for the diffusion approximation of the stationary distribution, following \cite{Miya2024,Miya2025}. Namely, we introduce a stochastic basis $(\Omega,\sr{F},\dd{F},\dd{P})$ and time-shift operator semigroup $\Theta_{\bullet} \equiv \{\Theta_{t}; t \ge 0\}$ on $\Omega$. Then, we define Palm distributions concerning counting processes following \cite{Miya2024}.

\subsection{Sequence of multi-level queues}
\label{sec:multi-level}

We index the sequence of the multi-level queues by positive integers $n$. These multi-level queues have the common number of levels independent of $n$, which is denoted by $K \ge 2$. Thus, these multi-level queues are also called $K$-level queues. Denote the levels of the $n$-th system by nonnegative real numbers $\ell^{(n)}_{0}, \ell^{(n)}_{1}, \ldots, \ell^{(n)}_{K-1}$ which satisfy that $\ell^{(n)}_{0} = 0 < \ell^{(n)}_{1} < \ldots < \ell^{(n)}_{K-1} < \infty$ and $\ell^{(n)}_{i} - \ell^{(n)}_{i-1} \ge 1$ for $i=1,2,\ldots, K-1$. Here, we can limit $\ell^{(n)}_{i}$'s to be integers, but we do not so because real numbers are more appearing when the queue length is scaled. By the same reason, we choose $\dd{R}_{+} = [0,\infty)$ as the common state space of the queue length processes. We partition this state space $\dd{R}_{+}$ as 
\begin{align*}
 & S^{(n)}_{1} = [0,\ell^{(n)}_{1}], \quad S^{(n)}_{i} = (\ell^{(n)}_{i-1}, \ell^{(n)}_{i}], \quad i =2, 3, \ldots, K-1, \quad S^{(n)}_{K} = (\ell^{(n)}_{K-1}, \infty).
\end{align*}
For convenience, we set $S^{(n)}_{0} = \{0\}$, and call $S^{(n)}_{i}$ the $i$-th level set. Note that the definition of $S^{(n)}_{1}$ is slightly different from that in \cite{AtarMiya2025}. We assume that each system has a single server and infinite buffer, and customers are served in the manner of first-come first-served, where the server is busy as long as the system is not empty. 

Following \cite{AtarMiya2025}, we introduce common counting processes $A(t)$ and $S(t)$ for the arrivals and services of the $n$-th system. Denote the $j$-th counting times of $A$ and $S$ by $t_{A,j}$ and $t_{S,j}$, respectively, for $j=1,2,\ldots$. Let $t_{A,0} = t_{S,0} = 0$, and let
\begin{align*}
  T_{A}(j) = t_{A,j+1} - t_{A,j}, \qquad T_{S}(j) = t_{S,j+1} - t_{S,j}, \qquad j=0, 1, \ldots.
\end{align*}
Then, we have
\begin{align*}
  A(t) = \min\left\{k \ge 0; t_{A,k} > t\right\}, \qquad S(t) = \min\left\{k \ge 0; t_{S,k} > t\right\}, \qquad t \ge 0.
\end{align*}

Assume the following conditions for the arrivals and the service times.

\begin{mylist}{3}
\item [(\sect{preliminaries}.a)] $A(\cdot)$ and $S(\cdot)$ are delayed renewal processes and independent of each other. Namely, $T_{A}(j) > 0$, $j=1,2,\ldots$ are $i.i.d.$ random variables, while $T_{A}(0) \ge 0$ is independent of $T_{A}(j)$ for $j \ge 1$. Similar independences are assumed for $T_{S}(j)$ for $j=0,1,\ldots$.

\item [(\sect{preliminaries}.b)]  Denote by $T_{A}$ and $T_{S}$ random variables subject to the common distributions of $T_{A}(j)$ and $T_{S}(j)$ for $j \ge 1$, respectively. These $T_{A}$ and $T_{S}$ have unit means and finite variances $\sigma_{A}^{2}$ and $\sigma_{S}^{2}$ such that $\sigma_{A}^{2} + \sigma_{S}^{2} > 0$, respectively, while $T_{A}(0)$ and $T_{S}(0)$ do not necessarily have the same distributions as $T_{A}$ and $T_{S}$, respectively, but have finite means and variances.
\end{mylist}

Using these renewal processes, we inductively define the queue length process $L^{(n)}(\cdot) \equiv \{L^{(n)}(t); t \ge 0\}$, arrival counting process $N^{(n)}_{e}(\cdot)$ and departure counting process $N^{(n)}_{d}(\cdot)$ of the $n$-th multi-level queue. 

Let $t^{(n)}_{e,j}$ be the $j$-th arrival time, and let $R^{(n)}_{e}(t)$ be the nominal time to the next arrival at time $t$, which has the following dynamics. Let $R^{(n)}_{e}(t^{(n)}_{e,j}) = T_{A}(j)$, and let $\lambda^{(n)}_{i} > 0$ be the speed of time for arrivals under $L^{(n)}(t) \in S^{(n)}_{i}$ for $i=1,2,\ldots,K$, namely,
\begin{align}
\label{eq:Re-t}
   \frac {d}{dt} R^{(n)}_{e}(t) = - \sum_{i=1}^{K} \lambda^{(n)}_{i} 1(L^{(n)}(t) \in S^{(n)}_{i}), \qquad t \ge t^{(n)}_{e,j},
\end{align}
as long as $\inf_{t^{(n)}_{e,j} < s \le t} R^{(n)}_{e}(s) > 0$, then the $j+1$ arrival time is defined as $t^{(n)}_{e,j+1} = \inf \{t > t^{(n)}_{e,j}; R^{(n)}_{e}(t-) = 0\}$. We let $\lambda^{(n)}_{0} = \lambda^{(n)}_{1}$, which is not assumed in \cite{AtarMiya2025} because $S^{(n)}_{1} = (0,\ell^{(n)}_{1}]$ there. However, this difference does not make any change for the process limit in \cite{AtarMiya2025}. Note that $R^{(n)}_{e}(t)$ is not the residual arrival time to the next arrival, but the amount of time to be processed until the next arrival under the time change. We call $R^{(n)}_{e}(t)$ the nominal time of residual arrival at $t$. Thus,  the counting process $N^{(n)}_{e}(\cdot)$ for arrivals is defined as
\begin{align}
\label{eq:Ne-t}
  N^{(n)}_{e}(t) = \min\{j \ge 0; t < t^{(n)}_{e,j+1}\}, \qquad t \ge 0.
\end{align}

Similarly, let $t^{(n)}_{d,j}$ be the $j$-th departure time, then the nominal residual departure time $R^{(n)}_{d}(t)$ is defined, where $R^{(n)}_{d}(t_{d,j}-) = 0$ and $R^{(n)}_{d}(t_{d,j}) = T_{S}(j)$, and \eq{Re-t} is changed to
\begin{align}
\label{eq:Rd-t}
   \frac {d}{dt} R^{(n)}_{d}(t) = - \sum_{i=1}^{K} \mu^{(n)}_{i} 1(L^{(n)}(t) \in S^{(n)}_{i}) + \mu^{(n)}_{1} 1(L^{(n)}(t) \in S^{(n)}_{0}), \qquad t \ge t^{(n)}_{d,j},
\end{align}
as long as $\inf_{t^{(n)}_{d,j} < s \le t} R^{(n)}_{d}(s) > 0$, where $\mu^{(n)}_{i} > 0$ be the service speed under $L^{(n)}(t)$ in $S^{(n)}_{1} \setminus S^{(n)}_{0}$ or in $S^{(n)}_{i}$ for $i = 2,3,\ldots,K$. Then, the counting process $N^{(n)}_{d}(\cdot)$ for departures is defined as
\begin{align}
\label{eq:Nd-t}
  N^{(n)}_{d}(t) = \min\{j \ge 0; t < t^{(n)}_{d,j+1}\}, \qquad t \ge 0.
\end{align}
Note that $R^{(n)}_{d}(t)$ is unchanged when $L^{(n)}(t) = 0$, that is, the system is empty.

Since the multi-level queue has a single server, we must have
\begin{align}
\label{eq:Ln-t}
  L^{(n)}(t) = L^{(n)}(0) + N^{(n)}_{e}(t) - N^{(n)}_{d}(t), \qquad t \ge 0.
\end{align}
Define processes $R^{(n)}(\cdot) \equiv \{R^{(n)}(t); t \ge 0\}$ and $X^{(n)}(\cdot) \equiv \{X^{(n)}(t); t \ge 0\}$ as
\begin{align*}
  R^{(n)}(t) = (R^{(n)}_{e}(t), R^{(n)}_{d}(t)), \qquad X^{(n)}(t) = (L^{(n)}(t), R^{(n)}_{e}(t), R^{(n)}_{d}(t)), \qquad t \ge 0,
\end{align*}
then it is not hard to see that $X^{(n)}(\cdot)$ is inductively constructed through $\{X^{(n)}(s); 0 \le s \le t^{(n)}_{j}\}$ for $j=0,1,\ldots$, where 
\begin{align*}
  t^{(n)}_{j} = \inf_{t \ge 0} \{t \ge 0; j \le N^{(n)}_{e}(t) + N^{(n)}_{d}(t) \}.
\end{align*}
Note that $t^{(n)}_{0} = 0$. Define the counting process $N^{(n)}(\cdot)$ generated by $t^{(n)}_{j}$'s as
\begin{align}
\label{eq:Nn-t}
  N^{(n)}(t) = \inf\{j \ge 0; t < t^{(n)}_{j+1}\}, \qquad t \ge 0.
\end{align}
Note that $X^{(n)}(t)$ is continuous in $t \ge 0$ except for the counting instants of $N^{(n)}(\cdot)$.

By the construction of $X^{(n)}(\cdot)$, it can be defined on the common stochastic basis $(\Omega,\sr{F},\dd{F},\dd{P})$ with the common time-shift operator $\Theta_{\bullet}$ because $\{X^{(n)}(\cdot); n \ge 1\}$ is the collection of countably many stochastic processes. Note that $X^{(n)}(t)$ is right-continuous with left-hand limits in $t \ge 0$, and $X^{(n)}(\cdot)$ can be defined as a strong Markov process with respect to the filtration $\dd{F}^{(n)} \equiv \{\sr{F}^{(n)}_{t}; t \ge 0\}$, where $\sr{F}^{(n)}_{t} = \sigma(\{X^{(n)}(s); s \in [0,t]\})$, that is, the $\sigma$-field generated by $X^{(n)}(s)$ for $s \in [0,t]$. Throughout the paper, we consider this strong Markov process.

\subsection{Time evolution of the $n$-th multi-level queue}
\label{sec:time-evolution}

Note that $X^{(n)}(\cdot)$ for the $K$-level queue is uniquely constructed by the following data.
\begin{align}
\label{eq:M-primitive}
  A, S, \{(\ell^{(n)}_{i}, \lambda^{(n)}_{i}, \mu^{(n)}_{i}); i=0,1,\ldots,K\},
\end{align}
where $\ell^{(n)}_{K} = \infty$ and $\mu^{(n)}_{0} = 0$. We call this data the modeling primitives of the $n$-th system. We next describe the time evolution of $X^{(n)}(\cdot)$ using a test function. For this, we introduce some notations. We choose $\dd{R}_{+}^{3}$ for the state space of $X^{(n)}(\cdot)$. Let $D(\dd{R}_{+}^{3})$ be the set of all measurable functions from $\dd{R}_{+}^{3}$ to $\dd{R}$ which have finitely many discontinuities at most in each finite time interval. Let $D_{b}(\dd{R}_{+}^{3})$ be the subset $D(\dd{R}_{+}^{3})$ whose elements are all bounded functions, and let $D_{b}^{p}(\dd{R}_{+}^{3})$ be the subset of $D_{b}(\dd{R}_{+}^{3})$ whose elements have bounded partial derivatives from the right concerning each variable. For $f \in D_{b}^{p}(\dd{R}_{+}^{3})$, denote the partial derivatives of $f(x_{1},x_{2},x_{3})$ concerning $x_{i}$ from the right by $\frac {\partial^{+}}{\partial x_{i}}$ for $i=1,2,3$.

Define operator $\sr{H}^{(n)}$ which maps $f \in D_{b}^{p}(\dd{R}_{+}^{3})$ to $\sr{H}^{(n)} f \in D_{b}(\dd{R}_{+}^{3})$ by
\begin{align*}
  & \sr{H}^{(n)} f(\vc{x}) = - \sum_{i = 1}^{K} \lambda^{(n)}_{i} 1(x_{1} \in S^{(n)}_{i}) \frac {\partial^{+}}{\partial x_{2}} f(\vc{x}) \nonumber\\
  & \quad  - \sum_{i = 1}^{K} \mu^{(n)}_{i} 1(x_{1} \in S^{(n)}_{i}) \frac {\partial^{+}}{\partial x_{3}} f(\vc{x}) + \mu^{(n)}_{1} 1(x_{1} \in S^{(n)}_{0}) \frac {\partial^{+}}{\partial x_{3}} f(\vc{x}), \quad \vc{x} \equiv (x_{1}, x_{2}, x_{3}) \in \dd{R}_{+}^{3}.
\end{align*}
This $\sr{H}^{(n)}$ describes the changes of $X^{(n)}(t)$ between the adjacent counting instants of $N^{(n)}(\cdot)$.

We also introduce a difference operator $\Delta$ to describe the discontinuous changes of $X^{(n)}(t)$ at counting instants of $N^{(n)}(\cdot)$. Define it as $\Delta g(t) = g(t) - g(t-)$ for function $g$ from $\dd{R}_{+}$ to $\dd{R}$ which is right-continuous with left-limits. At these counting instants, arrival and service completion may simultaneously occur. To describe them precisely, we assume that an arrival occurs first, and introduce an intermediate state between state changes by arrival and departure.

Thus, we define $\dd{F}$-adapted process $X^{(n)}_{1}(\cdot) \equiv \{(L^{(n)}_{1}(t), R^{(n)}_{e,1}(t), R^{(n)}_{d,1}(t)); t \ge 0\}$ as 
\begin{align*}
  & X^{(n)}_{1}(t) = 
\begin{cases}
X^{(n)}(t-), & \Delta N^{(n)}_{e}(t) = 0,\\
 (L^{(n)}(t-) + 1,  T_{A}(N_{e}(t)), R^{(n)}_{d}(t-)), & \Delta N^{(n)}_{e}(t) = 1,
\end{cases}
\end{align*}
and call $X^{(n)}_{1}(t)$ an intermediate state. Then, just after time $t$, we have
\begin{align*}
  X^{(n)}(t) = \begin{cases}
(L^{(n)}_{1}(t) -1, R^{(n)}_{e,1}(t), T_{S}(N^{(n)}_{d}(t))), & \Delta N^{(n)}_{d}(t) = 1\\
X^{(n)}_{1}(t), & \Delta N^{(n)}_{d}(t) = 0.   
\end{cases}
\end{align*}
Thus, we can see that the arrival first assumption has no influence on the queue length. 

For convenience, we define another difference operators $\Delta_{e}$ and $\Delta_{d}$ as
\begin{align*}
 & \Delta_{e} f(X^{(n)})(t) = f(X^{(n)}_{1}(t)) - f(X^{(n)}(t-)),\\
 & \Delta_{d} f(X^{(n)})(t) = f(X^{(n)}(t)) - f(X^{(n)}_{1}(t)),
\end{align*}
where function $f(X^{(n)})$ is defined by $f(X^{(n)})(t) = f((X^{(n)}(t))$ for $t \ge 0$. Recall the definition \eq{Nn-t} of $N^{(n)}(\cdot)$, and assume that
\begin{align}
\label{eq:test-f}
  \mbox{$f(X^{(n)})(t)$ is continuous in $t \ge 0$ if $\Delta N^{(n)}(t) = 0$.}
\end{align}
Then, $\Delta f(X^{(n)})(t) = \Delta_{e} f(X^{(n)})(t) + \Delta_{d} f(X^{(n)})(t)$, so, from the elementary observation of the time evolution of $X^{(n)}(t)$, we have
\begin{align}
\label{eq:fXn-t}
  f(X^{(n)}(t)) & = f(X^{(n)}(0)) + \int_{0}^{t} \sr{H}^{(n)} f(X^{(n)})(u) du \nonumber\\
  & \quad + \sum_{u \in (0,t]} [\Delta_{e} f(X^{(n)})(u) + \Delta_{d} f(X^{(n)})(u)], \qquad f \in D_{b}^{p}(\dd{R}_{+}^{3}),
\end{align}
where the summation is well defined because $\Delta_{e} f(X^{(n)})(u)$ and $\Delta_{d} f(X^{(n)})(u)$ vanish except for finitely many $u$ in $(0,t]$. 

\subsection{Heavy traffic assumptions}
\label{sec:HT-a}

We introduce heavy traffic conditions for the sequence of the multi-level queues indexed by $n$. We first recall some of the modeling primitives of the $n$-th system. 
\begin{align*}
 & \ell^{(n)}_{i}, \quad \lambda^{(n)}_{i}, \quad \mu^{(n)}_{i}, \qquad i=0,1,\ldots, K, \\
 & \dd{E}[T_{A}] = 1, \quad \sigma_{A}^{2} = \dd{E}[(T_{A} - 1)^{2}], \qquad \dd{E}[T_{S}]= 1, \quad \sigma_{S}^{2} = \dd{E}[(T_{S} - 1)^{2}].
\end{align*}
Using these notations, we define
\begin{align*}
   & \rho^{(n)}_{i} = \lambda^{(n)}_{i}/\mu^{(n)}_{i}, \qquad \sigma^{(n)}_{i} = (\lambda^{(n)}_{i} \sigma_{A}^{2} + \mu^{(n)}_{i} \sigma_{S}^{2})^{1/2}, \qquad i=1,2, \ldots, K,
\end{align*}
where $\sigma^{(n)}_{i} > 0$ by (\sect{preliminaries}.b) in \sectn{multi-level} and (\sect{preliminaries}.c) below.

We aim for $X^{(n)}(\cdot)$ to represent heavy traffic as $n \to \infty$. For this, we assume the following three conditions as $n \to \infty$.
\begin{mylist}{3}
\item [(\sect{preliminaries}.c)] There are $\lambda_{i} > 0, \; \widehat{\lambda}_{i} \in \dd{R}$ for $i=0,1, \ldots, K$ and $\mu_{i} > 0, \; \widehat{\mu}_{i} \in \dd{R}$ for $i=1,2, \ldots, K$ such that
\begin{align*}
  \lambda^{(n)}_{i} = \lambda_{i} + \widehat{\lambda}_{i} n^{-1/2} + o(n^{-1/2}) > 0, \qquad \mu^{(n)}_{i} = \mu_{i} + \widehat{\mu}_{i} n^{-1/2} + o(n^{-1/2}) > 0.
\end{align*}

\item [(\sect{preliminaries}.d)] For each $i=1,2, \ldots, K$, let $b_{i} = \widehat{\lambda}_{i} - \widehat{\mu}_{i}$, then $\lambda^{(n)}_{i} - \mu^{(n)}_{i} = b_{i} n^{-1/2} + o(n^{-1/2})$.

\item [(\sect{preliminaries}.e)] There are constants $\ell_{1}, \ell_{2}, \ldots, \ell_{K}$ such that $0 < \ell_{1} < \ell_{2}< \ldots < \ell_{K}$ and $\ell^{(n)}_{i} = \ell_{i} n^{1/2} + o(1)$ for $i=1,2,\ldots,K$.
\end{mylist}
Here, we write $f(n) = o(g(n))$ as $n \to \infty$ for positive valued functions $f, g$ of $n$ if \linebreak $\lim_{n \to \infty} f(n)/g(n) = 0$. Similarly, $f(n) = O(g(n))$ if $\limsup_{n \to \infty} f(n)/g(n) < \infty$.
Note that (\sect{preliminaries}.c) and (\sect{preliminaries}.d) imply that
\begin{align}
\label{eq:la-mu-limit}
  \lambda^{(n)}_{i} \to \lambda_{i}, \quad \mu^{(n)}_{i} \to \mu_{i}, \quad n \to \infty, \qquad \lambda_{i} = \mu_{i}, \qquad i=1,2,\ldots,K.
\end{align}

Let $\sigma_{i} = (\lambda_{i} \sigma_{A}^{2} + \mu_{i} \sigma_{S}^{2})^{1/2}$, and let
\begin{align*}
   & S_{1} = [0,\ell_{1}], \qquad S_{i} = (\ell_{i-1}, \ell_{i}], \quad i =2, 3, \ldots, K-1, \qquad S_{K} = (\ell_{K-1}, \infty).
\end{align*}
For the notational convenience, we introduce the following functions of $x \in \dd{R}_{+}$.
\begin{align}
\label{eq:b-sigma-x}
 & b(x) \equiv \sum_{i = 1}^{K} b_{i} 1(x \in S_{i}), \qquad \sigma(x) \equiv \sum_{i = 1}^{K} \sigma_{i} 1(x \in S_{i}).
\end{align}
Then, from (\sect{preliminaries}.c), (\sect{preliminaries}.d) and \eq{la-mu-limit}, 
\begin{align}
\label{eq:b-sigma-lim}
\begin{split}
 & \lim_{n \to \infty} n^{1/2} \sum_{i = 1}^{K} (\lambda^{(n)}_{i} - \mu^{(n)}_{i}) 1(n^{1/2} x \in S^{(n)}_{i}) = b(x), \\
 & \lim_{n \to \infty} \sum_{i =1}^{K} \sigma^{(n)}_{i} 1(n^{1/2} x \in S^{(n)}_{i}) = \sigma(x).
 \end{split}
\end{align}

\subsection{Stationary framework and Palm distributions}
\label{sec:stationary}

Our main interest is the stationary distributions of the indexed systems and its diffusion scaled limit in heavy traffic. For this, we need a condition for their stability, which is given below.
\begin{itemize}
\item [(\sect{preliminaries}.f)] $\rho^{(n)}_{K} < 1$ for all $n \ge 1$ and $b_{K} < 0$.				
\end{itemize}
We assume this stability condition throughout the paper from now on. Then, for each $n \ge 1$, the Markov process $X^{(n)}(\cdot)$ has the stationary distribution. Hence, we can make it a stationary process. In what follows, we consider this $X^{(n)}(\cdot)$ as a stationary strong Markov process for all $n \ge 1$ unless otherwise stated. Denote its stationary distribution by $\pi^{(n)}$, and let $\dd{P}_{\pi^{(n)}}$ be the probability measure on $(\Omega,\sr{F}^{(n)}_{\infty})$ such that
\begin{align}
\label{eq:Pn-stationary}
  \dd{P}_{\pi^{(n)}}(\Theta_{t}^{-1} A) = \dd{P}_{\pi^{(n)}}(A), \qquad A \in \sr{F}^{(n)}_{\infty}, t \ge 0,
\end{align}
where $\sr{F}^{(n)}_{\infty} = \sigma(\cup_{t \ge 0} \sr{F}^{(n)}_{t})$. Since we only consider the sequence of $X^{(n)}(\cdot)$ for $n \ge 1$, we can assume that
\begin{align}
\label{eq:P-stationary}
  \dd{P}(\Theta_{t}^{-1} A) = \dd{P}(A), \qquad A \in \sr{F}, t \ge 0,
\end{align}
appropriately choosing $(\Omega,\sr{F},\dd{F})$, and consider $\dd{P}_{\pi^{(n)}}$ as the restriction of $\dd{P}$ on $(\Omega,\sr{F}^{(n)}_{\infty})$. In this setting, we denote the expectation of a random variable $X$ generated from $X^{(n)}(\cdot)$ simply by $\dd{E}(X)$, instead of $\dd{E}_{\pi^{(n)}}(X)$.

Thus, we have the stationary framework on the stochastic basis $(\Omega,\sr{F},\dd{F},\dd{P})$. On this framework, $X^{(n)}(\cdot)$ is a stationary process consistent with time-shift $\Theta_{\bullet}$. Then, counting processes $N^{(n)}_{e}(\cdot)$ and $N^{(n)}_{d}(\cdot)$ are also stationary processes consistent with $\Theta_{\bullet}$ from their constructions by \eq{Re-t}, \eq{Ne-t}, \eq{Rd-t} and \eq{Nd-t}. Let
\begin{align*}
  \alpha^{(n)}_{e} = \dd{E}[N^{(n)}_{e}(1)], \qquad \alpha^{(n)}_{d} = \dd{E}[N^{(n)}_{d}(1)].
\end{align*}
We note the following simple facts.
\begin{lemma}
\label{lem:alpha 1}
$\alpha^{(n)}_{e}$ and $\alpha^{(n)}_{d}$ are finite.
\end{lemma}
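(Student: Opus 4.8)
The statement is that $\alpha^{(n)}_{e} = \dd{E}[N^{(n)}_{e}(1)]$ and $\alpha^{(n)}_{d} = \dd{E}[N^{(n)}_{d}(1)]$ are finite. The plan is to bound each counting process over a unit interval by a counting process that does \emph{not} depend on the state, and whose expectation is manifestly finite by the renewal structure assumed in (\sect{preliminaries}.a)--(\sect{preliminaries}.b).

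First I would treat $N^{(n)}_{e}(\cdot)$. By \eq{Re-t} the nominal residual arrival time $R^{(n)}_{e}$ decreases at speed $\sum_{i} \lambda^{(n)}_{i} 1(L^{(n)}(t) \in S^{(n)}_{i})$, which is at most $\lambda^{(n)}_{\max} \equiv \max_{1 \le i \le K} \lambda^{(n)}_{i} < \infty$ at every instant. Hence between consecutive arrivals the elapsed real time is at least $T_{A}(j)/\lambda^{(n)}_{\max}$; equivalently, $N^{(n)}_{e}$ is stochastically dominated by the (time-scaled) renewal counting process $\widetilde{A}(t)$ generated by the inter-event times $T_{A}(j)/\lambda^{(n)}_{\max}$. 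More precisely, on the stationary basis, $N^{(n)}_{e}(1) \le \widetilde{A}(1)$ pathwise, and $\dd{E}[\widetilde{A}(1)] < \infty$ because $\widetilde{A}$ is a delayed renewal process whose generic inter-event time $T_{A}/\lambda^{(n)}_{\max}$ has a strictly positive mean $1/\lambda^{(n)}_{\max}$ and finite variance, and the delay $T_{A}(0)/\lambda^{(n)}_{\max}$ has finite mean; the finiteness of the renewal function over a bounded interval is standard (e.g.\ it follows from Wald's identity or from subadditivity of the renewal function). The same argument, using \eq{Rd-t} and $\mu^{(n)}_{\max} \equiv \max_{1 \le i \le K} \mu^{(n)}_{i}$ together with the fact that $R^{(n)}_{d}$ is frozen while the system is empty (which only \emph{slows} the departure clock), gives $N^{(n)}_{d}(1) \le \widetilde{S}(1)$ pathwise for the renewal process $\widetilde{S}$ built from $T_{S}(j)/\mu^{(n)}_{\max}$, and hence $\alpha^{(n)}_{d} < \infty$.

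A small point to handle carefully is the delay (first inter-event time $T_{A}(0)$, $T_{S}(0)$) and the stationary initialization: because we took $X^{(n)}(\cdot)$ stationary, the "residual" of the first event at time $0$ is not $T_{A}(0)$ itself but some stationary overshoot. To keep the argument clean I would instead bound $N^{(n)}_{e}(1)$ crudely: in real time $1$, the nominal arrival clock can process at most $\lambda^{(n)}_{\max}$ units, so $N^{(n)}_{e}(1)$ is at most the number of indices $j \ge 0$ for which the partial sums $\sum T_{A}$ (shifted by the stationary residual) fall in an interval of length $\lambda^{(n)}_{\max}$; this is dominated by $1 + \sup_{s \ge 0}\big(A^{\circ}(s + \lambda^{(n)}_{\max}) - A^{\circ}(s)\big)$ where $A^{\circ}$ is an ordinary (undelayed) renewal process with inter-event law $T_{A}$, and the expectation of the latter over a bounded window is finite by the elementary renewal bound $\dd{E}[A^{\circ}(c)] \le c/\dd{E}[\min(T_A, c)] + 1$ or by the finite-variance assumption directly. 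The main (and only real) obstacle is thus bookkeeping: making the pathwise domination precise on the stationary stochastic basis, and invoking the right elementary renewal-theoretic fact that a renewal function is finite on bounded intervals given a positive-mean, finite-variance (indeed just positive-mean) inter-event distribution. Once that domination is in place, finiteness of $\alpha^{(n)}_{e}$ and $\alpha^{(n)}_{d}$ is immediate, and the lemma follows.
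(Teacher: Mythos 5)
Your main argument (the first paragraph) is correct and is essentially the paper's: both rest on the observation that the nominal arrival clock runs at speed at most $\lambda^{(n)}_{\max}$, so the inter-arrival times dominate $T_{A}(j)/\lambda^{(n)}_{\max}$ and $N^{(n)}_{e}$ is pathwise dominated by the corresponding time-scaled renewal process (likewise for departures, where freezing the clock on the empty state only slows it down). The only difference is the closing step: the paper uses stationarity to write $\alpha^{(n)}_{e}=\dd{E}[N^{(n)}_{e}(1)]=\lim_{t\to\infty}N^{(n)}_{e}(t)/t$ and bounds the long-run rate via the strong law for $\sum_{j}T_{A}(j)$, whereas you bound $\dd{E}[N^{(n)}_{e}(1)]$ directly by the renewal function over a bounded interval; both are legitimate, and yours handles the stationary initialization more transparently. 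One genuine flaw, though, is the ``crude'' alternative in your second paragraph: $\sup_{s\ge 0}\bigl(A^{\circ}(s+c)-A^{\circ}(s)\bigr)$ is almost surely infinite whenever the inter-event distribution is not bounded away from zero (runs of many short inter-event times occur infinitely often by Borel--Cantelli), and in any case you cannot pass from $\sup_{s}\dd{E}[A^{\circ}(s+c)-A^{\circ}(s)]<\infty$ to finiteness of $\dd{E}[\sup_{s}(\cdots)]$. The correct treatment of the random starting point is what your first paragraph already does implicitly: condition on the first arrival epoch after time $0$, after which the remaining count in $(0,1]$ is that of an ordinary renewal process over a nominal window of length at most $\lambda^{(n)}_{\max}$, so $\dd{E}[N^{(n)}_{e}(1)]\le 1+\dd{E}[A^{\circ}(\lambda^{(n)}_{\max})]<\infty$. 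Drop the supremum formulation and keep the domination argument; with that, the proof is complete.
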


\begin{proof}
It follows from $R^{(n)}_{e}(t^{(n)}_{e,j}) =  T_{A}(j)$ and \eq{Re-t} that
\begin{align*}
  0 & = R^{(n)}_{e}(t^{(n)}_{e,j+1}-) = R^{(n)}_{e}(t^{(n)}_{e,j}) + \int_{t^{(n)}_{e,j}}^{t^{(n)}_{e,j+1}} (R^{(n)}_{e})'(u) du\\
  & =  T_{A}(j) - \int_{t^{(n)}_{e,j}}^{t^{(n)}_{e,j+1}} \sum_{i=1}^{K} \lambda^{(n)}_{i} 1(L^{(n)}(u) \in S^{(n)}_{i}) du.
\end{align*}
This implies that
\begin{align*}
  t^{(n)}_{e,j+1} - t^{(n)}_{e,j} & = \inf\left\{t > 0; \sum_{i=1}^{K} \lambda^{(n)}_{i} \int_{t^{(n)}_{e,j}}^{t^{(n)}_{e,j}+t} 1(L^{(n)}(u) \in S^{(n)}_{i}) du \ge T_{A}(j)\right\} \\
  & \le (\max_{0 \le i \le K} \lambda^{(n)}_{i})^{-1} T_{A}(j).
\end{align*}
Since $N^{(n)}_{e}(\cdot)$ is a stationary increasing counting process, we have, by the law of large numbers,
\begin{align*}
  \dd{E}[N^{(n)}_{e}(1)] & = \lim_{t \to \infty} N^{(n)}(t)/t = \lim_{j \to \infty} N^{(n)}(t^{(n)}_{e,j})/t^{(n)}_{e,j} = \lim_{j \to \infty} j/t^{(n)}_{e,j} \le (\max_{0 \le i \le K} \lambda^{(n)}_{i})^{-1} \dd{E}[T_{A}].
\end{align*}
Hence, $\alpha^{(n)}_{e} = \dd{E}[N^{(n)}_{e}(1)] < \infty$. The finiteness of $\alpha^{(n)}_{d}$ is similarly proved.
\end{proof}

Since $\alpha^{(n)}_{e}$ and $\alpha^{(n)}_{d}$ are finite, we can define probability distributions $\dd{P}^{(n)}_{e}$ and $\dd{P}^{(n)}_{d}$ on $(\Omega,\sr{F})$ by
\begin{align}
\label{eq:Palm 1}
  \dd{P}^{(n)}_{v}(A) = (\alpha^{(n)}_{v})^{-1} \dd{E}\left[ \int_{(0,1]} 1_{\Theta_{u}^{-1} A} N^{(n)}_{v}(du) \right], \qquad A \in \sr{F}, v=e,d,
\end{align}
where $1_{A}$ is the indicator function of the event $A \in \sr{F}$. For $v=e,d$, $\dd{P}^{(n)}_{v}$ is called a Palm distribution concerning the counting process $N^{(n)}_{v}$. The intuitive meaning of the Palm distributions is briefly explained in \cite{Miya2025} (see \cite{Miya2024} for their details).

We use the Palm distributions for the derivation of the stationary equations as shown below. Take the expectation of \eq{fXn-t} for $t=1$ by $\dd{P}$, then the stationarity of $X^{(n)}(\cdot)$ and the definition of the Palm distribution yield
\begin{align*}
 & \dd{E}\left[\int_{0}^{1} \sr{H}^{(n)} f(X^{(n)}(u)) du\right] = \int_{0}^{1} \dd{E}\left[\sr{H}^{(n)} f(X^{(n)}(u))\right] du = \dd{E}\left[\sr{H}^{(n)} f(X^{(n)}(0))\right],\\
 & \dd{E}\left[\sum_{u \in (0,1]} \Delta_{v} f(X^{(n)})(u)\right] = \alpha^{(n)}_{v} \dd{E}^{(n)}_{v}\left[\Delta_{v} f(X^{(n)})(0) \right], \qquad v= e,d,
\end{align*}
where $\dd{E}^{(n)}_{v}$ stands for the expectation under $\dd{P}^{(n)}_{v}$ for $v=e,d$. Hence, if the assumptions (\sect{preliminaries}.a), (\sect{preliminaries}.b) and (\sect{preliminaries}.f) are satisfied, then we have a stationary equation:
\begin{align}
\label{eq:BAR1}
  & \dd{E}\left[ \sr{H}^{(n)}f(X^{(n)}(0)) \right]  \nonumber\\
  & \quad + \alpha^{(n)}_{e} \dd{E}^{(n)}_{e}\left[\Delta_{e} f(X^{(n)})(0) \right] + \alpha^{(n)}_{d} \dd{E}^{(n)}_{d}\left[\Delta_{d} f(X^{(n)})(0) \right] = 0, \qquad f \in D_{b}^{p}(\dd{R}_{+}^{3}).
\end{align}

Note that $\dd{E}\left[ \sr{H}^{(n)}f(X^{(n)}(0)) \right]$ can be written as $\dd{E}\left[ \sr{H}^{(n)}f(X^{(n)}) \right]$ by our notational convention. The formula \eq{BAR1} is a special case of the rate conservation law (e.g., see \cite{Miya1994}), and referred to as a basic adjoint relationship, BAR in short.

\section{Main results}
\label{sec:main}
\setnewcounter

We are now ready to present a main result, which will be proved in \sectn{proofs}. For convenience, by $X^{(n)} \equiv (L^{(n)}, R^{(n)}_{e}, R^{(n)}_{d})$, we denote a random vector subject to the stationary distribution of the Markov process $X^{(n)}(\cdot)$ for the $n$-th $K$-level $GI/G/1$ queue. 

Define probability distribution $\nu^{(n)}$ under the assumptions (\sect{preliminaries}.a)--(\sect{preliminaries}.f) as
\begin{align}
\label{eq:hnu-B}
  \nu^{(n)}(B) = \dd{P}(n^{-1/2}L^{(n)} \in B), \qquad B \in \sr{B}(\dd{R}_{+}).
\end{align}
We refer to this $\nu^{(n)}$ as a scaled stationary distribution. For convenience, we denote the set $\{1,2,\ldots,j\}$ by $J_{j}$ for $j=1,2,\ldots,K$. Let 
\begin{align*}
  d^{(n)}_{i} = \dd{P}(L^{(n)} \in S^{(n)}_{i}), \qquad i \in J_{K},
\end{align*}
and define the conditional distributions of $\nu^{(n)}$ on $S^{(n)}_{i}$ by
\begin{align*}
 \nu^{(n)}_{i}(B) = \dd{P}(n^{-1/2}L^{(n)} \in B| L^{(n)} \in S^{(n)}_{i}), \qquad B \in \sr{B}(\dd{R}_{+}), \; i \in J_{K},
\end{align*}
as long as $d^{(n)}_{i} > 0$. For convenience, we put $\nu^{(n)}_{i}$ to be null when $d^{(n)}_{i} = 0$, then we have
\begin{align*}
  \nu^{(n)}(B) = \sum_{i \in J_{K}} d^{(n)}_{i} \nu^{(n)}_{i}(B), \qquad B \in \sr{B}(\dd{R}_{+}).
\end{align*}

We also need the following notations.
\begin{align}
\label{eq:MLQ-beta 1}
  \beta_{i} = \frac {2 b_{i}} {\sigma_{i}^{2}}, \qquad i \in J_{K},
\end{align}
where recall that $\sigma_{i}^{2} = \lambda_{i} \sigma_{A}^{2} + \mu_{i} \sigma_{S}^{2}$. Define $\xi_{j}$ as
\begin{align}
\label{eq:xi}
  \xi_{0} = 1, \qquad \xi_{j} = \prod_{i=1}^{j} e^{\beta_{i}(\ell_{i} - \ell_{i-1})}, \quad j \in J_{K-1}, \qquad \xi_{K} = 0,
\end{align}
and let
\begin{align}
\label{eq:ci}
  c_{i} = \begin{cases}
 2\sigma_{i}^{-2} (\ell_{i-1} - \ell_{i}) \xi_{i-1}, & b_{i} = 0,\\
 b_{i}^{-1} (1 - e^{\beta_{i} (\ell_{i} - \ell_{i-1})}) \xi_{i-1} \; & b_{i} \not= 0,
\end{cases}
 \quad i \in J_{K-1}, \qquad c_{K} = b_{K}^{-1} \xi_{K-1},
\end{align}
where it is noticed that $c_{i} < 0$.

\begin{theorem}
\label{thr:MLQ-main}
Assume the conditions (\sect{preliminaries}.a)--(\sect{preliminaries}.f), and, for $i \in J_{K}$, let $\nu_{i}$ be the probability distribution on $(\dd{R}_{+}, \sr{B}(\dd{R}_{+}))$ which has the density function $h_{i}$:
\begin{align}
\label{eq:hj}
 & h_{i}(x) = \begin{cases}
 \ds \left(\frac 1{\ell_{i} - \ell_{i-1}} 1(b_{i} = 0) + \frac { \beta_{i} e^{\beta_{i} (x - \ell_{i-1})}} {e^{\beta_{i} (\ell_{i} - \ell_{i-1})} - 1}1(b_{i} \not= 0)\right) 1(x \in S_{i}), & i \in J_{K-1}, \\
 -\beta_{K} e^{\beta_{K} (x - \ell_{K-1})} 1(x \in S_{K}), & i = K, 
\end{cases}
\end{align}
and let $\nu$ be the probability distribution on $\sr{B}(\dd{R}_{+})$ whose density function $h$ is given by
\begin{align}
\label{eq:h}
  h(x) = \sum_{i=1}^{K} d_{i} h_{i}(x), \qquad x \ge 0,
\end{align}
where 
\begin{align}
\label{eq:di}
 & d_{i} = 
 \frac {c_{i}} {\sum_{j=1}^{K} c_{j}}, \qquad i \in J_{K}.
\end{align}
Then,
\begin{align}
\label{eq:h-nu-lim}
  \nu^{(n)}_{i} \Rightarrow \nu_{i}, \quad i \in J_{K}, \qquad \nu^{(n)} \Rightarrow \nu, \qquad n \to \infty,
\end{align}
where ``$\Rightarrow$'' stands for the weak convergence of a sequence of distributions.
\end{theorem}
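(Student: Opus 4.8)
The plan is to follow the BAR method of \cite{Miya2025} in three phases: (i) prove tightness of the scaled stationary laws $\{\nu^{(n)}\}_{n\ge1}$; (ii) show that every subsequential weak limit is a stationary (adjoint) solution of the reflected diffusion of \cite{AtarMiya2025} with piecewise-constant drift $b(\cdot)$ and variance $\sigma^{2}(\cdot)$; and (iii) solve that stationary equation explicitly and check that its unique probability solution is the $\nu$ of \eq{h}. Once (iii) is in hand, every subsequential limit equals $\nu$, so $\nu^{(n)}\Rightarrow\nu$; then $d^{(n)}_{i}=\nu^{(n)}(n^{-1/2}S^{(n)}_{i})\to\nu(S_{i})=d_{i}>0$ because the endpoints of $S_{i}$ are $\nu$-null and $n^{-1/2}\ell^{(n)}_{i}\to\ell_{i}$; and since $\nu^{(n)}=\sum_{i}d^{(n)}_{i}\nu^{(n)}_{i}$ while the $\nu_{i}$ have the disjoint supports $S_{i}$ (with $\nu_{i}(\{\ell_{i-1}\})=\nu_{i}(\{\ell_{i}\})=0$), the conditional convergences $\nu^{(n)}_{i}\Rightarrow\nu_{i}$ follow. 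So the work is entirely in (i)--(iii).

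For (i), I would apply the BAR \eq{BAR1} with test functions built from $x_{1}\mapsto e^{\theta(n^{-1/2}x_{1}\wedge M)}$ for a small $\theta>0$, together with $O(n^{-1/2})$ corrections affine in $(x_{2},x_{3})$ (suitably truncated) chosen to convert the Palm expectations produced by $\Delta_{e},\Delta_{d}$ into stationary ones; a first-order expansion then reduces the BAR, after multiplication by $n^{1/2}$ and $M\to\infty$, to a relation of the form $\dd{E}[b(n^{-1/2}L^{(n)})e^{\theta n^{-1/2}L^{(n)}}]=-r^{(n)}+o(1)$ with $r^{(n)}=n^{1/2}\mu^{(n)}_{1}d^{(n)}_{0}$ bounded (by flow balance $\alpha^{(n)}_{e}=\alpha^{(n)}_{d}$ this equals $-\sum_{i}b_{i}d^{(n)}_{i}+o(1)$). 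Since $b\equiv b_{K}<0$ on the unbounded set $S_{K}$ by (\sect{preliminaries}.f), and $e^{\theta n^{-1/2}x_{1}}$ is bounded uniformly in $n$ on the remaining (bounded) level sets, this forces $\sup_{n}\dd{E}[e^{\theta n^{-1/2}L^{(n)}}]<\infty$, hence uniform exponential tails and tightness; the resulting moment bounds also supply the uniform integrability used below.

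For (ii), fix a smooth bounded $g$ on $\dd{R}_{+}$ with bounded derivatives. Applying \eq{BAR1} to $f(\vc{x})=g(n^{-1/2}x_{1})$ annihilates the $\sr{H}^{(n)}$-term (it has no $x_{1}$-derivative), and a second-order Taylor expansion of $g(n^{-1/2}(L^{(n)}\pm1))-g(n^{-1/2}L^{(n)})$ gives, after multiplication by $n^{1/2}$,
\begin{align*}
 & \alpha^{(n)}_{e}\dd{E}^{(n)}_{e}[g'(n^{-1/2}L^{(n)}(0-))]-\alpha^{(n)}_{d}\dd{E}^{(n)}_{d}[g'(n^{-1/2}L^{(n)}(0-))]\\
 & \qquad +\tfrac12 n^{-1/2}\big(\alpha^{(n)}_{e}\dd{E}^{(n)}_{e}+\alpha^{(n)}_{d}\dd{E}^{(n)}_{d}\big)[g''(n^{-1/2}L^{(n)}(0-))]=o(1).
\end{align*}
Applying \eq{BAR1} to the residual-weighted functions $x_{2}\,g'(n^{-1/2}x_{1})$ and $x_{3}\,g'(n^{-1/2}x_{1})$ (truncated in $x_{2},x_{3}$ to lie in $D^{p}_{b}(\dd{R}_{+}^{3})$ and corrected at second order), and using that a fresh inter-arrival or service time resets $x_{2}$ resp.\ $x_{3}$ and is independent of the pre-jump state, converts $\sr{H}^{(n)}$ of these functions into $-\lambda^{(n)}(\cdot)g'$ and $-\mu^{(n)}(\cdot)g'+\mu^{(n)}_{1}1(\cdot\in S^{(n)}_{0})g'$; this yields Palm-inversion identities expressing $\alpha^{(n)}_{e}\dd{E}^{(n)}_{e}[g']$ and $\alpha^{(n)}_{d}\dd{E}^{(n)}_{d}[g']$ through the stationary expectations $\dd{E}[\lambda^{(n)}(L^{(n)})g'(n^{-1/2}L^{(n)})]$, $\dd{E}[\mu^{(n)}(L^{(n)})g'(n^{-1/2}L^{(n)})]$, the empty-state probability $d^{(n)}_{0}$, and second-order remainders controlled by $\dd{E}[T_{A}^{2}],\dd{E}[T_{S}^{2}]$. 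Substituting these, writing $\lambda^{(n)}_{i}-\mu^{(n)}_{i}=b_{i}n^{-1/2}+o(n^{-1/2})$ by (\sect{preliminaries}.c)--(\sect{preliminaries}.d), and passing to a subsequence along which $\nu^{(n)}\Rightarrow\nu^{*}$, $d^{(n)}_{i}\to d^{*}_{i}$ and $n^{1/2}\mu^{(n)}_{1}d^{(n)}_{0}\to r\ge0$, I would arrive, using \eq{b-sigma-lim} and $\sigma^{2}_{i}=\lambda_{i}\sigma_{A}^{2}+\mu_{i}\sigma_{S}^{2}$ with $\sigma_{A}^{2}=\dd{E}[T_{A}^{2}]-1$, $\sigma_{S}^{2}=\dd{E}[T_{S}^{2}]-1$, at
\[
 \int_{\dd{R}_{+}}\Big[\tfrac12\sigma^{2}(x)g''(x)+b(x)g'(x)\Big]\,\nu^{*}(dx)+r\,g'(0)=0
\]
for all such $g$, which is the BAR characterizing the stationary law of the reflected diffusion. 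I expect the hard part to be exactly here: producing the coefficient $\tfrac12\sigma^{2}(\cdot)$ requires the second-order residual corrections to be set up correctly and the Palm expectations $\dd{E}^{(n)}_{e},\dd{E}^{(n)}_{d}$ to be reconciled with $\dd{E}$ — controlling the joint behavior of $n^{-1/2}L^{(n)}$ and the nominal residuals $(R^{(n)}_{e},R^{(n)}_{d})$ and keeping every $o(\cdot)$ remainder negligible after multiplication by $n^{1/2}$; handling the reflecting boundary at $0$ (where the service clock freezes) and the coefficient jumps at the $\ell_{i}$ within this weak framework is the other delicate point.

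For (iii), I would argue directly from this weak equation. Testing with $g$ supported in a single open level set $S_{i}$ shows $\nu^{*}$ has there a density $h^{*}$ with $\tfrac12\sigma^{2}_{i}(h^{*})'-b_{i}h^{*}=0$ (the one-dimensional stationary flux is constant in $x$ and vanishes by the reflecting condition at $0$ together with the integrability of $\nu^{*}$ on $S_{K}$, which is forced by $b_{K}<0$); hence $h^{*}\propto e^{\beta_{i}(x-\ell_{i-1})}$ on $S_{i}$, or constant when $b_{i}=0$, matching \eq{hj} up to a level weight. Testing with general $g$ yields the matching conditions across the $\ell_{i}$ (continuity of $\sigma^{2}(x)h^{*}(x)$), which propagate the products $\xi_{j}$ of \eq{xi}; integrating $h^{*}$ over each $S_{i}$ then reproduces the $c_{i}$ of \eq{ci}, and normalization $\sum_{i}d^{*}_{i}=1$ gives the $d_{i}$ of \eq{di}. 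Thus $\nu^{*}=\nu$, and the conclusions $\nu^{(n)}\Rightarrow\nu$ and $\nu^{(n)}_{i}\Rightarrow\nu_{i}$ follow as in the first paragraph.
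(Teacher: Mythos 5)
Your overall architecture --- tightness, identification of every subsequential limit through a limiting stationary equation for smooth $g$, and explicit solution of that equation --- is a viable route and differs from the paper's in its endgame: the paper never forms the limiting diffusion BAR for smooth test functions, but instead takes the piecewise-exponential test functions \eq{f-theta-r} and computes the conditional moment generating functions $\psi^{(n)}_{i}(\theta_{i})/d^{(n)}_{i}$ in closed form directly from the pre-limit BAR (Lemma \lemt{MLQ-BAR2-i} and Step 2 of Section \sect{computing}), so uniqueness of the limit comes for free from the explicit MGF rather than from a uniqueness argument for the weak equation. Your phase (iii) is essentially Corollary 3.1/Theorem 3.2 of the cited work of Miyazawa on multi-level reflecting Brownian motion, and is fine in outline.

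The genuine gap is in phase (ii), and it propagates back into phase (i), whose key relation $\dd{E}[b(n^{-1/2}L^{(n)})e^{\theta n^{-1/2}L^{(n)}}]=-r^{(n)}+o(1)$ already presupposes it. You assert that residual-weighted test functions ``corrected at second order'' yield Palm-inversion identities accurate enough that, after multiplication by $n$, the coefficient $\tfrac12\sigma^{2}(x)$ emerges --- but you do not construct the corrections, and that construction is the entire technical content of the proof. Concretely, one needs: (a) correction factors in $(x_{2},x_{3})$ chosen so that the reset of the residual clocks at each jump cancels exactly against the pre-jump value --- this is what the paper's $g^{(n)}_{\theta}$ with $\eta^{(n)},\zeta^{(n)}$ solving \eq{MLQ-boundary1} achieves, making every jump contribution vanish except at the finitely many boundaries $\ell^{(n)}_{i}$; affine corrections alone cannot produce the $\sigma_{A}^{2},\sigma_{S}^{2}$ contributions to the diffusion coefficient; (b) uniform bounds and uniform integrability for $R^{(n)}_{e}$, $R^{(n)}_{d}$ under both $\dd{P}$ and the Palm measures, with truncation at $n^{1/2}$ and $o(n^{-1/2})$ error (Lemmas \lemt{MLQ-moment}--\lemt{MLQ-Red} and Corollary \cort{MLQ-Re-Rs 1}), without which the ``second-order remainders'' you invoke are not negligible after multiplication by $n$; and (c) identification of the boundary quantities $\Delta^{(n)}_{i}$ of \eq{MLQ-Dn-i}, which your smooth-$g$ equation hides inside ``$o(1)$'' but which are only $O(n^{-1/2})$ and must be pinned down (Lemma \lemt{MLQ-R2-1}) before the matching conditions at the $\ell_{i}$ can be recovered. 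A further unaddressed point: passing $\dd{E}[\lambda^{(n)}(L^{(n)})g'(n^{-1/2}L^{(n)})]$ to $\int\lambda(x)g'(x)\,\nu^{*}(dx)$ requires $\nu^{*}(\{\ell_{i}\})=0$ for the subsequential limit, which is not known a priori; the paper sidesteps this by working with each level set's conditional MGF separately. As written, the proposal identifies the right target equation but leaves its derivation --- the actual proof --- as an acknowledged placeholder.
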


\begin{remark}
\label{rem:MLQ-main 1}
The stationary distribution $\nu$ of this theorem for $K=2$ exactly corresponds to the stationary distribution of the 2-level queue obtained in (ii) of Theorem 3.1 of \cite{Miya2025} if $b_{i}$ and $\sigma_{i}^{2}$ are replaced, respectively, by $-\mu b_{i}$ and $\mu c_{i} \sigma_{i}^{2}$ for the notations $b_{i}, c_{i}$ and $\sigma_{i}^{2}$ of \cite{Miya2025}. This supports the conjecture in Remark 3.2 of \cite{Miya2025} that the stationary distribution in (ii) holds without any additional assumption.
\end{remark}

The following corollary is immediate from Corollary 3,1 of \cite{Miya2024b}.

\begin{corollary}
\label{cor:MLQ-main}
Assume the assumptions of \thr{MLQ-main} and let
\begin{align}
\label{eq:MLQ-beta-x 1}
\beta(x) = \frac {2b(x)} {\sigma^{2}(x)}, \quad x \in \dd{R}_{+},
\end{align}
where recall the definition \eq{b-sigma-x} for $b(x)$ and $\sigma(x)$. Then, the probability distribution $\nu$ of \thr{MLQ-main} has the density function $h$ given by
\begin{align}
\label{eq:MLQ-h}
  h(x) = \frac 1{C \sigma^{2}(x)} \exp\left(\int_{0}^{x} \beta(y) dy\right), \qquad x \in \dd{R}_{+},
\end{align}
where $C = \int_{0}^{\infty} h(x) dx$, that is, $C$ is the normalizing constant. 
\end{corollary}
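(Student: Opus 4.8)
The plan is to verify that the piecewise density $h=\sum_{i=1}^{K} d_{i} h_{i}$ produced by \thr{MLQ-main} coincides with the right-hand side of \eq{MLQ-h}; equivalently, this is the assertion that $h$ is the stationary density of the one-dimensional reflected diffusion on $\dd{R}_{+}$ with drift $b(\cdot)$ and variance coefficient $\sigma^{2}(\cdot)$ of \eq{b-sigma-x}, which is exactly Corollary 3.1 of \cite{Miya2024b} once the present $\beta(\cdot)$ of \eq{MLQ-beta-x 1} is matched with the corresponding quantity there. So one may either quote that corollary directly, or carry out the reconciliation in a few lines, which I sketch now.

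Since $\beta(y)=\sum_{i=1}^{K}\beta_{i}1(y\in S_{i})$ is piecewise constant, for $x\in S_{i}$ one has $\int_{0}^{x}\beta(y)\,dy=\sum_{j=1}^{i-1}\beta_{j}(\ell_{j}-\ell_{j-1})+\beta_{i}(x-\ell_{i-1})=\log\xi_{i-1}+\beta_{i}(x-\ell_{i-1})$ by the definition \eq{xi} of $\xi_{i-1}$, while $\sigma^{2}(x)=\sigma_{i}^{2}$ there; hence on $S_{i}$ the right-hand side of \eq{MLQ-h} equals $(C\sigma_{i}^{2})^{-1}\xi_{i-1}e^{\beta_{i}(x-\ell_{i-1})}$. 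On the other side, using $\beta_{i}=2b_{i}/\sigma_{i}^{2}$ from \eq{MLQ-beta 1} together with the definitions \eq{hj}, \eq{ci} and \eq{di}, a one-line cancellation in each of the three cases --- $i\in J_{K-1}$ with $b_{i}\neq0$ (where $c_{i}=b_{i}^{-1}(1-e^{\beta_{i}(\ell_{i}-\ell_{i-1})})\xi_{i-1}$ cancels the denominator of $h_{i}$ up to sign, leaving $b_{i}^{-1}\beta_{i}=2/\sigma_{i}^{2}$), $i\in J_{K-1}$ with $b_{i}=0$ (where $c_{i}=2\sigma_{i}^{-2}(\ell_{i-1}-\ell_{i})\xi_{i-1}$ meets the uniform density $(\ell_{i}-\ell_{i-1})^{-1}$), and $i=K$ (where $\beta_{K}<0$ by (\sect{preliminaries}.f) and $c_{K}=b_{K}^{-1}\xi_{K-1}$) --- gives
\[
  d_{i}h_{i}(x)=-\frac{2}{\sigma_{i}^{2}\,\sum_{j=1}^{K}c_{j}}\;\xi_{i-1}\,e^{\beta_{i}(x-\ell_{i-1})},\qquad x\in S_{i}.
\]
Summing over $i$ through \eq{h} then shows $h(x)=\big(-\tfrac{1}{2}\sum_{j=1}^{K}c_{j}\big)^{-1}\sigma^{-2}(x)\exp(\int_{0}^{x}\beta(y)\,dy)$ for all $x\in\dd{R}_{+}$, where $-\tfrac12\sum_{j}c_{j}>0$ because each $c_{j}<0$.

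It remains to identify the constant. Since each $h_{i}$ is a probability density and $\sum_{i}d_{i}=1$ by \eq{di}, $h$ is a probability density, so integrating the last display over $\dd{R}_{+}$ forces $-\tfrac12\sum_{j}c_{j}=\int_{0}^{\infty}\sigma^{-2}(x)\exp(\int_{0}^{x}\beta(y)\,dy)\,dx$, which is precisely the normalizing constant $C$ of \eq{MLQ-h}; this gives \eq{MLQ-h}. I expect no genuine obstacle: the argument is pure bookkeeping in the $\xi_{i}$'s and in signs, the only point needing a little care being the consistent handling of the degenerate case $b_{i}=0$, for which $\beta_{i}=0$ and $\xi_{i}=\xi_{i-1}$.
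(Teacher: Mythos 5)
Your proposal is correct. Note that the paper itself offers no computation here: it simply declares the corollary ``immediate from Corollary 3.1 of \cite{Miya2024b}'', i.e.\ it outsources the identification of the piecewise density of \thr{MLQ-main} with the form \eq{MLQ-h} to that external result. Your argument instead verifies the identity directly from \eq{hj}--\eq{di}, and the bookkeeping checks out: on $S_{i}$ one indeed has $\int_{0}^{x}\beta(y)\,dy=\log\xi_{i-1}+\beta_{i}(x-\ell_{i-1})$, the sign flip $(1-e^{\beta_{i}(\ell_{i}-\ell_{i-1})})/(e^{\beta_{i}(\ell_{i}-\ell_{i-1})}-1)=-1$ together with $b_{i}^{-1}\beta_{i}=2/\sigma_{i}^{2}$ gives $d_{i}h_{i}(x)=-2\,(\sigma_{i}^{2}\sum_{j}c_{j})^{-1}\xi_{i-1}e^{\beta_{i}(x-\ell_{i-1})}$ in all three cases (including $b_{i}=0$, where $\beta_{i}=0$ and the factors $\pm(\ell_{i}-\ell_{i-1})$ cancel, and $i=K$, where $b_{K}<0$ by (\sect{preliminaries}.f) makes $h_{K}$ integrable), and since $h$ is a convex combination of probability densities the constant must be the normalizing one, with the bonus of the explicit value $C=-\tfrac12\sum_{j=1}^{K}c_{j}>0$. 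What your route buys is a self-contained, purely algebraic proof that does not require the reader to unpack \cite{Miya2024b}; what the paper's citation buys is the simultaneous identification of $\nu$ with the stationary distribution of the reflected diffusion \eq{SIE-Z}, which is the content the authors actually want for \rem{MLQ-main 2} but is not needed for the density formula itself.
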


\begin{remark}
\label{rem:MLQ-main 2}
As shown in Theorem 3.2 of \cite{Miya2024b}, $\nu$ is the stationary distribution of the reflected diffusion $Z(\cdot)$ on $[0,\infty)$ which is obtained as the unique solution of the following stochastic integral equation.
\begin{align}
\label{eq:SIE-Z}
  Z(t) = Z(0) + \int_{0}^{t} b(Z(s)) ds + \int_{0}^{t} \sigma(Z(s)) dW(s) + Y(t) \ge 0, \qquad t \ge 0,
\end{align}
where $W(\cdot)$ is the one-dimensional standard Brownian motion and $Y(\cdot)$ is the nondecreasing process satisfying that $\int_{0}^{t} Z(t) dY(t) = 0$ for $t \ge 0$. By Theorem 2.1 of \cite{AtarMiya2025}, this reflected diffusion $Z(\cdot)$ is obtained as the weak limit of the diffusion scaled queue length process of the multi-level queue. Hence, \thr{MLQ-main} shows the limit interchange of the sequences of processes and stationary distributions.
\end{remark}

\section{Proof of \thr{MLQ-main}}
\label{sec:proofs}
\setnewcounter

\thr{MLQ-main} can be proved if we show that $\{\nu^{(n)}; n \ge 1\}$ is tight because the process limit is obtained in Theorem 2.1 of \cite{AtarMiya2025} (see \sectn{concluding} for details). However, we prove \thr{MLQ-main} neither using Theorem 2.1 of \cite{AtarMiya2025} nor using Theorem 3.2 of \cite{Miya2024b} (see \rem{MLQ-main 2}). Thus, the proof here is independent of the proofs of those theorems. 

We prove \thr{MLQ-main} in three steps, following the method used for the proof of Theorem 3.1 of \cite{Miya2025}. The first step considers some of basic notions and facts, the second step proves auxiliary lemmas for asymptotic expansions of several quantities, and the third step computes the limiting distribution, which proves the tightness. These three steps are given in Sections \sect{basic}, \sect{asymptotic} and \sect{computing}, respectively.

\subsection{Basic facts}
\label{sec:basic}

We consider some properties of $\alpha^{(n)}_{e}$ and $L^{(n)}(\cdot)$ under the Palm distributions. Similar to Lemma 4.1 of \cite{Miya2025}, we have the following lemma. Recall that all the sample paths are right-continuous with left-limits, and $X^{(n)} = (L^{(n)}, R^{(n)}_{e}, R^{(n)}_{d})$ is a random vector subject to the stationary distribution of $X^{(n)}(\cdot)$.
\begin{lemma}
\label{lem:MLQ-basic 1}
Under conditions (\sect{preliminaries}.a), (\sect{preliminaries}.b) and (\sect{preliminaries}.f), $\alpha^{(n)}_{e} = \alpha^{(n)}_{d}$, and
\begin{align}
\label{eq:MLQ-balance 1}
 & \dd{P}^{(n)}_{e}[L^{(n)}(0-) = \ell] = \dd{P}^{(n)}_{d}[L^{(n)}(0) = \ell], \qquad \ell \ge 0,\\
\label{eq:MLQ-alpha 1}
 & \alpha^{(n)}_{e} = \sum_{i = 1}^{K} \lambda^{(n)}_{i} \dd{P}\left[L^{(n)} \in S^{(n)}_{i}\right],\\
\label{eq:MLQ-alpha 2}
 & \alpha^{(n)}_{d} = \mu^{(n)}_{1} \dd{P}\left[0 < L^{(n)} \le \ell_{1} \right] + \sum_{i = 2}^{K} \mu^{(n)}_{i} \dd{P}\left[L^{(n)} \in S^{(n)}_{i}\right].
\end{align}
\end{lemma}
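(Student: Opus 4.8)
The plan is to derive all four assertions from the basic adjoint relationship \eq{BAR1} (equivalently, the rate conservation law) applied to test functions depending on a single coordinate of $X^{(n)}(\cdot)$, supplemented by the elementary dynamics \eq{Re-t}--\eq{Nd-t}. For the balance relation and $\alpha^{(n)}_{e}=\alpha^{(n)}_{d}$, I would take, for each $\ell\ge 0$, the test function $f_{\ell}(\vc{x})=1\wedge\big((\ell+1-x_{1})\vee 0\big)$. It lies in $D^{p}_{b}(\dd{R}^{3}_{+})$, and since $L^{(n)}(\cdot)$ and the intermediate queue length $L^{(n)}_{1}(\cdot)$ are integer valued, $f_{\ell}(X^{(n)}(\cdot))=1(L^{(n)}(\cdot)\le\ell)$ and likewise on $X^{(n)}_{1}$; as $f_{\ell}$ is free of $x_{2},x_{3}$, $\sr{H}^{(n)}f_{\ell}\equiv 0$. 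A jump computation gives $\Delta_{e}f_{\ell}(X^{(n)})(0)=-1(L^{(n)}(0-)=\ell)$ at an arrival (the queue length rises by one from $L^{(n)}(0-)$) and $\Delta_{d}f_{\ell}(X^{(n)})(0)=1(L^{(n)}_{1}(0)=\ell+1)=1(L^{(n)}(0)=\ell)$ at a departure (it drops by one from the intermediate value $L^{(n)}_{1}(0)\ge 1$), so \eq{BAR1} becomes
\begin{align*}
  \alpha^{(n)}_{e}\,\dd{P}^{(n)}_{e}[L^{(n)}(0-)=\ell]=\alpha^{(n)}_{d}\,\dd{P}^{(n)}_{d}[L^{(n)}(0)=\ell],\qquad \ell\ge 0.
\end{align*}
Summing over $\ell=0,1,2,\dots$ (all terms nonnegative; the events form an a.s.\ partition under each Palm law) yields $\alpha^{(n)}_{e}=\alpha^{(n)}_{d}$, and dividing by this common value gives \eq{MLQ-balance 1}.

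For the rate identities \eq{MLQ-alpha 1} and \eq{MLQ-alpha 2} — which say that the arrival, resp.\ departure, rate is the stationary average of the corresponding time-change speed — I would argue directly from the dynamics, as in the proof of \lem{alpha 1}. For the arrivals, \eq{Re-t} with $R^{(n)}_{e}(t^{(n)}_{e,j})=T_{A}(j)$ and $R^{(n)}_{e}(t^{(n)}_{e,j+1}-)=0$ gives $\sum_{i=1}^{K}\lambda^{(n)}_{i}\int_{t^{(n)}_{e,j}}^{t^{(n)}_{e,j+1}}1(L^{(n)}(u)\in S^{(n)}_{i})\,du=T_{A}(j)$ for $j\ge1$; summing over $j=1,\dots,m$, dividing by $m$, and letting $m\to\infty$, the right side tends to $\dd{E}[T_{A}]=1$ by the law of large numbers, while on the left $t^{(n)}_{e,m}/m\to(\alpha^{(n)}_{e})^{-1}$ and the pointwise ergodic theorem for the stationary processes $1(L^{(n)}(\cdot)\in S^{(n)}_{i})$ give the limit $(\alpha^{(n)}_{e})^{-1}\sum_{i}\lambda^{(n)}_{i}\dd{P}[L^{(n)}\in S^{(n)}_{i}]$, which is \eq{MLQ-alpha 1}. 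For \eq{MLQ-alpha 2} the only new point is that, by \eq{Rd-t}, $R^{(n)}_{d}(\cdot)$ is frozen while $L^{(n)}(\cdot)=0$, so the \emph{effective} service speed is $\mu^{(n)}_{1}1(0<L^{(n)}\le\ell^{(n)}_{1})+\sum_{i=2}^{K}\mu^{(n)}_{i}1(L^{(n)}\in S^{(n)}_{i})$; running the same computation over the departure epochs $t^{(n)}_{d,j}$ with the service times $T_{S}(j)$ then produces \eq{MLQ-alpha 2}. (Equivalently one may plug $f(\vc{x})=x_{2}\wedge M$, resp.\ $x_{3}\wedge M$, into \eq{BAR1} and let $M\to\infty$; the identities just obtained are exactly what pins down the Palm expectation of the reset value of the nominal residual time as $\dd{E}[T_{A}]=1$, resp.\ $\dd{E}[T_{S}]=1$.)

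I expect no step to be deep, with the care concentrated in two places. First, the bookkeeping at instants where an arrival and a departure coincide: this is precisely why \eq{fXn-t} splits the jump part through the intermediate state $X^{(n)}_{1}$ and why \eq{BAR1} carries the $\Delta_{e}$-term under $\dd{P}^{(n)}_{e}$ and the $\Delta_{d}$-term under $\dd{P}^{(n)}_{d}$, so one must keep straight which of $L^{(n)}(0-)$, $L^{(n)}_{1}(0)$, $L^{(n)}(0)$ enters each jump. Second, identifying the Palm mean of the reset nominal residual time with the mean interarrival/interservice time: because the marks $T_{A}(j)$, $T_{S}(j)$ influence the locations of the epochs, this is not a trivial independence statement, which is why I route \eq{MLQ-alpha 1}--\eq{MLQ-alpha 2} through the pathwise relations and the ergodic theorem rather than purely through \eq{BAR1}. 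The overall scheme follows that of Lemma 4.1 of \cite{Miya2025}.
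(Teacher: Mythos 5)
Your proposal is correct. For \eq{MLQ-balance 1} and $\alpha^{(n)}_{e}=\alpha^{(n)}_{d}$ you follow essentially the paper's route: the paper takes $f(\vc{x})=x_{1}\wedge(\ell+1)$, whose jump terms yield the cumulative identity $\alpha^{(n)}_{e}\dd{P}^{(n)}_{e}[L^{(n)}(0-)\le\ell]=\alpha^{(n)}_{d}\dd{P}^{(n)}_{d}[L^{(n)}(0)\le\ell]$ followed by $\ell\to\infty$, whereas your $f_{\ell}$ gives the pointwise identity directly and you sum over $\ell$; the two are interchangeable and your jump bookkeeping through $X^{(n)}_{1}$ is right. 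Where you genuinely diverge is \eq{MLQ-alpha 1}--\eq{MLQ-alpha 2}: the paper simply applies \eq{BAR1} with $f(\vc{x})=x_{2}$ and $f(\vc{x})=x_{3}$, for which $\sr{H}^{(n)}f$ produces exactly the weighted probabilities on the right-hand sides, one of the two jump terms vanishes, and the surviving Palm term equals $\alpha^{(n)}_{v}\dd{E}[T_{A}]$ or $\alpha^{(n)}_{v}\dd{E}[T_{S}]$, i.e.\ $\alpha^{(n)}_{v}$. Your pathwise integration of \eq{Re-t}/\eq{Rd-t} between counting epochs plus the law of large numbers and the ergodic theorem reaches the same identities; it costs you the cross-ergodic identification $t^{(n)}_{e,m}/m\to 1/\alpha^{(n)}_{e}$ and ergodicity of the stationary regime, but the paper already invokes exactly these in the proof of \lem{alpha 1}, so nothing new is assumed. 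Two minor points in your favour: strictly, $x_{2}$ and $x_{3}$ are unbounded and hence not in $D_{b}^{p}(\dd{R}_{+}^{3})$, so the truncation $x_{2}\wedge M$ with $M\to\infty$ that you mention parenthetically is the careful form of the paper's argument; and your flagged concern about the Palm mean of the reset value resolves via the independence of $T_{A}(j)$ from $\sr{F}^{(n)}_{t^{(n)}_{e,j}}$, which gives $\dd{E}\big[\sum_{j}T_{A}(j)1(t^{(n)}_{e,j}\in(0,1])\big]=\dd{E}[T_{A}]\,\alpha^{(n)}_{e}$, the Wald-type step the paper leaves implicit.
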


\begin{proof}
For each integer $\ell \ge 0$, let $f(\vc{x}) = x_{1} \wedge (\ell+1)$ for $\vc{x} \in \dd{R}_{+}^{3}$, then obviously $f \in D_{b}^{p}(\dd{R}_{+}^{3})$. Applying this $f$ to \eq{BAR1}, we have 
\begin{align}
\label{eq:MLQ-alpha}
  \alpha^{(n)}_{e} \dd{P}^{(n)}_{e}[(L^{(n)}(0-) \le \ell] = \alpha^{(n)}_{d} \dd{P}^{(n)}_{d}[L^{(n)}(0) \le \ell], \qquad \ell \ge 0.
\end{align}
because $\sr{H}^{(n)}f(X^{(n)}(0)) = 0$ and
\begin{align*}
 \Delta_{e} f(X^{(n)})(0) & = L_{1}^{(n)}(0) \wedge (\ell+1) - L^{(n)}(0-) \wedge (\ell+1) \\
 & = 1(L^{(n)}(0-) \le \ell, \Delta N^{(n)}_{e}(0) = 1),\\
 \Delta_{d} f(X^{(n)})(0) & = L^{(n)}(0) \wedge (\ell+1) - L^{(n)}_{1}(0) \wedge (\ell+1) \\
 & = - 1(L^{(n)}(0) \le \ell, \Delta N^{(n)}_{d}(0) = 1).
\end{align*}
Letting $\ell \to \infty$ in \eq{MLQ-alpha}, we have $\alpha^{(n)}_{e} = \alpha^{(n)}_{d}$. Applying this fact to \eq{MLQ-alpha}, we have \eq{MLQ-balance 1}. We next let $f(\vc{x}) = x_{2}$, and apply it to \eq{BAR1}, then
\begin{align*}
   - \sum_{i = 1}^{K} \lambda^{(n)}_{i} \dd{P}\left[L^{(n)}(0) \in S^{(n)}_{i}\right] + \alpha^{(n)}_{e} = 0
\end{align*}
because $\dd{E}[T_{A}] = 1$. Hence, we have \eq{MLQ-alpha 1} because $L^{(n)}(0)$ has the same distribution as $L^{(n)}$ under $\dd{P}$. Similarly, we have \eq{MLQ-alpha 2} from \eq{BAR1} for $f(\vc{x}) = x_{3}$.
\end{proof}

Since it follows from \eq{MLQ-alpha 1} and \eq{MLQ-alpha 2} that
\begin{align}
\label{eq:alpha-r-bounds}
  0 < \min_{i =1,2,\ldots,K} \lambda^{(n)}_{i} \le \alpha^{(n)}_{e} = \alpha^{(n)}_{d} \le \max_{i =1,2,\ldots,K} \mu^{(n)}_{i}, \qquad n \ge 1,
\end{align}
we have the following corollary.
\begin{corollary}
\label{cor:alpha 2}
$\alpha^{(n)}_{e}$ is uniformly bounded away from $0$ and above for all $n \ge 1$.
\end{corollary}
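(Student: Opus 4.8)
The plan is to obtain the statement essentially by reading off the two‑sided estimate \eq{alpha-r-bounds} together with the heavy‑traffic scaling (\sect{preliminaries}.c). So there is almost nothing to do beyond assembling pieces that are already in place.

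First I would note why \eq{alpha-r-bounds} holds. The equality $\alpha^{(n)}_{e} = \alpha^{(n)}_{d}$ is part of \lem{MLQ-basic 1}. For the lower bound I would invoke \eq{MLQ-alpha 1}: since $\{S^{(n)}_{i} : i \in J_{K}\}$ is a partition of $\dd{R}_{+}$, the probabilities $\dd{P}[L^{(n)} \in S^{(n)}_{i}]$ sum to $1$, so $\alpha^{(n)}_{e} = \sum_{i=1}^{K} \lambda^{(n)}_{i}\,\dd{P}[L^{(n)} \in S^{(n)}_{i}]$ is a convex combination of the $\lambda^{(n)}_{i}$ and hence is at least $\min_{1 \le i \le K} \lambda^{(n)}_{i}$, which is strictly positive by (\sect{preliminaries}.c). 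For the upper bound I would use \eq{MLQ-alpha 2}: the events $\{0 < L^{(n)} \le \ell^{(n)}_{1}\}$ and $\{L^{(n)} \in S^{(n)}_{i}\}$ for $i = 2,\dots,K$ are pairwise disjoint, so their probabilities sum to at most $1$ (the atom at $0$ being excluded), whence $\alpha^{(n)}_{d} \le \max_{1 \le i \le K} \mu^{(n)}_{i}$. Putting these together gives
\[
 0 < \min_{1 \le i \le K}\lambda^{(n)}_{i} \;\le\; \alpha^{(n)}_{e} = \alpha^{(n)}_{d} \;\le\; \max_{1 \le i \le K}\mu^{(n)}_{i}, \qquad n \ge 1 .
\]

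Second, I would upgrade this to a uniform bound. By (\sect{preliminaries}.c) we have $\lambda^{(n)}_{i} \to \lambda_{i} > 0$ and $\mu^{(n)}_{i} \to \mu_{i} > 0$ as $n \to \infty$ for each of the finitely many indices $i$; a convergent sequence of positive reals with a positive limit is bounded away from $0$ and above, so $\inf_{n \ge 1}\min_{1 \le i \le K}\lambda^{(n)}_{i} > 0$ and $\sup_{n \ge 1}\max_{1 \le i \le K}\mu^{(n)}_{i} < \infty$. Combining with the display above yields finite constants $0 < c \le C < \infty$ such that $c \le \alpha^{(n)}_{e} \le C$ for all $n \ge 1$, which is precisely the assertion of the corollary.

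I do not expect a genuine obstacle here; the only point requiring a little care is the partition bookkeeping behind \eq{alpha-r-bounds} — namely that the level sets $S^{(n)}_{i}$ exhaust $\dd{R}_{+}$ so that the arrival‑side probabilities sum to exactly $1$, and that the isolated state $0$ is correctly omitted on the departure side so that one only gets ``$\le 1$'' there — and both are immediate from the definitions in \sectn{multi-level}.
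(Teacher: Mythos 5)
Your proposal is correct and follows exactly the paper's route: the paper also derives the corollary from the two-sided bound \eq{alpha-r-bounds}, which it obtains from \eq{MLQ-alpha 1} and \eq{MLQ-alpha 2} of \lem{MLQ-basic 1}, with the uniformity in $n$ coming from (\sect{preliminaries}.c) just as you argue. The only difference is that you spell out the partition bookkeeping and the convergence-implies-uniform-boundedness step, which the paper leaves implicit.
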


In what follows, we always replace $\alpha^{(n)}_{d}$ by $\alpha^{(n)}_{e}$, which is possible by \lem{MLQ-basic 1}. This simplifies the BAR \eq{BAR1}.

\subsection{Asymptotic expansions}
\label{sec:asymptotic}

 We apply an exponential type of a test function to the BAR  \eq{BAR1}, and compute various quantities under the stationary distribution $\dd{P}$ and the Palm distributions $\dd{P}^{(n)}_{e}$ and $\dd{P}^{(n)}_{d}$ introduced in \sectn{stationary}. For this, we prepare notations and lemmas.
 
Let $\varphi^{(n)}_{i}$ be the moment generating functions of probability measure $\nu^{(n)}_{i}$ for $i \in J_{K}$. Namely, 
\begin{align*}
 & \varphi^{(n)}_{i}(\theta_{i}) = \dd{E}[e^{\theta_{i} n^{-1/2}L^{(n)}}|L^{(n)} \in S^{(n)}_{i}],
\end{align*}
where $\theta_{i} \in \dd{R}$ for $i \in J_{K-1}$ and $\theta_{i} \le 0$ for $i=K$. To compute these moment generating functions, we will use the BAR \eq{BAR1} for test functions of exponential type, which are typically used in the BAR approach (e.g, see \cite{BravDaiMiya2017,Miya2017}). We first define function $g^{(n)}_{\theta}$ for $\theta \in \dd{R}$ as
\begin{align*}
 & g^{(n)}_{\theta}(\vc{y}) = e^{ - \eta^{(n)}(\theta) (y_{1} \wedge n^{1/2}) - \zeta^{(n)}(\theta) (y_{2} \wedge n^{1/2})}, \qquad \vc{y}= (y_{1},y_{2}) \in \dd{R}_{+}^{2},
\end{align*}
where $\eta^{(n)}(\theta), \zeta^{(n)}(\theta)$ are the solutions of the following equations for each $\theta \in \dd{R}$. 
\begin{align}
\label{eq:MLQ-boundary1}
  e^{\theta} \dd{E}\left[e^{-\eta^{(n)}(\theta) (T_{A} \wedge n^{1/2})}\right] = 1, \qquad e^{-\theta} \dd{E}\left[e^{-\zeta^{(n)}(\theta) (T_{S} \wedge n^{1/2})}\right] = 1.
\end{align}
These solutions exist and are finite because $\eta^{(n)}(\theta)$ is the inverse function of the Laplace transform of the finite positive random variable $T_{A} \wedge n^{1/2}$ which takes the value $e^{-\theta}$ and $\zeta^{(n)}(\theta)$ is similarly determined. Furthermore, $\eta^{(n)}(\theta)$ and $\zeta^{(n)}(\theta)$ are infinitely many differentiable functions of $\theta$. Hence, twice differentiating the equations in \eq{MLQ-boundary1}, the following lemma can be obtained by their Taylor expansions around the origin. Its proof can be found in \cite[Lemma 5.8]{BravDaiMiya2024}.
\begin{lemma}
\label{lem:MLQ-eta-zeta1}
Under the assumptions (\sect{preliminaries}.a)--(\sect{preliminaries}.e), we have the following expansions as $n^{-1/2} \theta \to 0$.
\begin{align}
\label{eq:MLQ-eta1}
 & \eta^{(n)}(n^{-1/2}\theta) = n^{-1/2} \theta + \frac 12 \sigma_{A}^{2} n^{-1} \theta^{2} + o(n^{-1} \theta^{2}),\\
\label{eq:MLQ-zeta1}
 & \zeta^{(n)}(n^{-1/2}\theta) = - n^{-1/2} \theta + \frac 12 \sigma_{S}^{2} n^{-1} \theta^{2} + o(n^{-1} \theta^{2}).
\end{align}
Furthermore, there are constants $d_{A}, d_{S}, a > 0$ such that, for $n \ge 1$ and $\theta \in \dd{R}$ satisfying $n^{-1/2}|\theta| < a$,
\begin{align}
\label{eq:MLQ-eta-zeta1}
 \big|\eta^{(n)}(n^{-1/2}\theta) (u_{1} \wedge n^{1/2}) & + \zeta^{(n)}(n^{-1/2}\theta) (u_{2} \wedge n^{1/2}) \big| \nonumber\\
  & \le |\theta| \left(d_{A} (n^{-1/2} u_{1} \wedge 1) + d_{S} (n^{-1/2}u_{2} \wedge 1)\right).
\end{align}
\end{lemma}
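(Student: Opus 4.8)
The plan is to treat the two scalar equations in \eq{MLQ-boundary1} separately by the inverse function theorem. Put $\Lambda^{(n)}_{A}(\eta) = \log\dd{E}[e^{-\eta (T_{A}\wedge n^{1/2})}]$ and $\Lambda^{(n)}_{S}(\zeta) = \log\dd{E}[e^{-\zeta (T_{S}\wedge n^{1/2})}]$. Since $T_{A}\wedge n^{1/2}$ and $T_{S}\wedge n^{1/2}$ are bounded and a.s.\ positive, these are real-analytic and strictly decreasing on all of $\dd{R}$, with value $0$ at the origin and derivatives there equal to $-m^{(n)}_{A}$ and $-m^{(n)}_{S}$, where $m^{(n)}_{A} = \dd{E}[T_{A}\wedge n^{1/2}]$ and $m^{(n)}_{S} = \dd{E}[T_{S}\wedge n^{1/2}]$; in particular the solutions $\eta^{(n)}(\theta), \zeta^{(n)}(\theta)$ of \eq{MLQ-boundary1} are the analytic local inverses of $-\Lambda^{(n)}_{A}$ and $\Lambda^{(n)}_{S}$ at $0$, as asserted in the text. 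Lagrange inversion then gives, for $\theta$ near $0$,
\begin{align*}
  \eta^{(n)}(\theta) &= \frac{\theta}{m^{(n)}_{A}} + \frac{v^{(n)}_{A}}{2(m^{(n)}_{A})^{3}}\,\theta^{2} + r^{(n)}_{A}(\theta),\\
  \zeta^{(n)}(\theta) &= -\frac{\theta}{m^{(n)}_{S}} + \frac{v^{(n)}_{S}}{2(m^{(n)}_{S})^{3}}\,\theta^{2} + r^{(n)}_{S}(\theta),
\end{align*}
where $v^{(n)}_{A}, v^{(n)}_{S}$ are the variances of $T_{A}\wedge n^{1/2}, T_{S}\wedge n^{1/2}$, and $r^{(n)}_{A}, r^{(n)}_{S}$ are cubic remainders (a priori only $o(\theta^{2})$ for each fixed $n$).

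Before letting $n\to\infty$ I would establish a uniform linear bound on $\eta^{(n)}, \zeta^{(n)}$ that simultaneously yields \eq{MLQ-eta-zeta1} and controls the remainders: there are $a, d_{A}, d_{S} > 0$ such that $|\eta^{(n)}(\phi)| \le d_{A}|\phi|$ and $|\zeta^{(n)}(\phi)| \le d_{S}|\phi|$ for all $n\ge 1$ and $|\phi| < a$. This follows straight from \eq{MLQ-boundary1}, not from the inversion. For $\phi > 0$ one has $\eta^{(n)}(\phi) > 0$, and Jensen's inequality applied to $\dd{E}[e^{-\eta^{(n)}(\phi)(T_{A}\wedge n^{1/2})}] = e^{-\phi}$ gives $e^{-\phi} \ge e^{-\eta^{(n)}(\phi)m^{(n)}_{A}}$, hence $\eta^{(n)}(\phi) \le \phi/m^{(n)}_{A}$; for $\phi < 0$ one has $\eta^{(n)}(\phi) < 0$ and, by $e^{t}\ge 1+t$, $e^{|\phi|} - 1 = \dd{E}[e^{|\eta^{(n)}(\phi)|(T_{A}\wedge n^{1/2})}] - 1 \ge |\eta^{(n)}(\phi)|m^{(n)}_{A}$, hence $|\eta^{(n)}(\phi)| \le (e^{|\phi|}-1)/m^{(n)}_{A} \le (e-1)|\phi|/m^{(n)}_{A}$ once $|\phi| < 1$. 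Since $m^{(n)}_{A} \ge \dd{E}[T_{A}\wedge 1] > 0$ for every $n\ge 1$ by assumption~(\sect{preliminaries}.b), the choice $a = 1$, $d_{A} = (e-1)/\dd{E}[T_{A}\wedge 1]$ works, and $\zeta^{(n)}$ is handled identically from the second equation in \eq{MLQ-boundary1}. Then \eq{MLQ-eta-zeta1} is immediate: writing $u_{i}\wedge n^{1/2} = n^{1/2}(n^{-1/2}u_{i}\wedge 1)$ and applying the bound at $\phi = n^{-1/2}\theta$ gives $|\eta^{(n)}(n^{-1/2}\theta)|(u_{1}\wedge n^{1/2}) \le d_{A}|\theta|(n^{-1/2}u_{1}\wedge 1)$, and likewise for the $\zeta$-term, so the triangle inequality finishes it.

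It remains to pass $n\to\infty$ in the expansions at argument $n^{-1/2}\theta$. Assumption~(\sect{preliminaries}.b) gives $\dd{E}[T_{A}^{2}] = 1 + \sigma_{A}^{2} < \infty$, so $\dd{E}[(T_{A}-n^{1/2})^{+}] \le n^{-1/2}\dd{E}[T_{A}^{2}1(T_{A} > n^{1/2})] = o(n^{-1/2})$, whence $m^{(n)}_{A} = 1 + o(n^{-1/2})$, while $\dd{E}[(T_{A}\wedge n^{1/2})^{2}] \to \dd{E}[T_{A}^{2}]$ gives $v^{(n)}_{A} \to \sigma_{A}^{2}$, with the analogous statements for $S$. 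Consequently the linear term of $\eta^{(n)}$ at $n^{-1/2}\theta$ equals $n^{-1/2}\theta + n^{-1/2}\theta(1/m^{(n)}_{A} - 1) = n^{-1/2}\theta + o(n^{-1}\theta^{2})$, the quadratic coefficient tends to $\tfrac12\sigma_{A}^{2}$, and the cubic remainder is controlled by the previous step: writing $e^{-\eta x} = 1 - \eta x + \tfrac12\eta^{2}x^{2} + R_{3}(\eta x)$ with $|R_{3}(y)| \le \tfrac16|y|^{3}e^{|y|}$, the linear bound gives $|\eta^{(n)}(n^{-1/2}\theta)(T_{A}\wedge n^{1/2})| \le d_{A}|\theta|$, so the contribution of $R_{3}$ is $O(|\theta|^{3}n^{-1})\dd{E}[(T_{A}\wedge n^{1/2})^{2}] = o(n^{-1}\theta^{2})$, the uniform bound $\dd{E}[(T_{A}\wedge n^{1/2})^{2}] \le \dd{E}[T_{A}^{2}]$ being the point. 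Collecting terms yields \eq{MLQ-eta1}, and the identical computation for $\zeta^{(n)}$ yields \eq{MLQ-zeta1}.

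The step I expect to need the most care is this last one: because the truncation level $n^{1/2}$ grows, the variables $T_{A}\wedge n^{1/2}$ are not uniformly bounded, so the analyticity of $\Lambda^{(n)}_{A}$ by itself does not furnish remainder estimates independent of $n$; one must couple the explicit Taylor expansion of $e^{-\eta x}$ with the a priori linear bound on $\eta^{(n)}$ and the uniform finite-second-moment bound. The degenerate cases $\sigma_{A}^{2} = 0$ or $\sigma_{S}^{2} = 0$, where the relevant inter-event time is a.s.\ constant and the solution is exactly linear, require no separate treatment, and everything else is routine bookkeeping.
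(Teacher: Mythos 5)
Your overall architecture --- invert the truncated Laplace transforms, extract a uniform linear bound on $\eta^{(n)},\zeta^{(n)}$ directly from \eq{MLQ-boundary1}, and control the Taylor remainders with the uniform second-moment bound --- is reasonable and more self-contained than the paper, which simply quotes Lemma 5.8 of \cite{BravDaiMiya2024}. However, the step meant to deliver the uniform linear bound (and hence \eq{MLQ-eta-zeta1}) for $\phi>0$ is wrong: Jensen's inequality does give $e^{-\phi}=\dd{E}[e^{-\eta^{(n)}(\phi)(T_A\wedge n^{1/2})}]\ge e^{-\eta^{(n)}(\phi)\,m^{(n)}_A}$, but taking logarithms this reads $-\phi\ge-\eta^{(n)}(\phi)\,m^{(n)}_A$, i.e.\ $\eta^{(n)}(\phi)\ge\phi/m^{(n)}_A$ --- a \emph{lower} bound, not the upper bound you assert. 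Your own expansion $\eta^{(n)}(\phi)=\phi/m^{(n)}_A+\tfrac{v^{(n)}_A}{2(m^{(n)}_A)^3}\phi^2+\cdots$ with $v^{(n)}_A>0$ already contradicts the claimed $\eta^{(n)}(\phi)\le\phi/m^{(n)}_A$ for small $\phi>0$. Since the lemma is used with $\theta_i$ of both signs, and since your control of the cubic remainder relies on $|\eta^{(n)}(n^{-1/2}\theta)(T_A\wedge n^{1/2})|\le d_A|\theta|$, this gap propagates into \eq{MLQ-eta1} as well. The bound is true, but it needs a quantitative \emph{upper} bound on $\dd{E}[e^{-\eta(T_A\wedge n^{1/2})}]$ rather than Jensen: for instance, pick $t_0\in(0,1)$ with $p_0=\dd{P}[T_A>t_0]>0$, note $\dd{P}[T_A\wedge n^{1/2}>t_0]\ge p_0$ for all $n\ge1$, deduce $e^{-\phi}\le 1-p_0(1-e^{-\eta t_0})$, conclude first that $\eta^{(n)}(\phi)$ is uniformly small for small $\phi$ and then that $\eta^{(n)}(\phi)\le 2\phi/(p_0t_0)$.

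A secondary issue: your estimate of the $R_3$-contribution is $O(|\theta|^3n^{-1})$, which for fixed $\theta$ is only $O(n^{-1}\theta^2)$, not $o(n^{-1}\theta^2)$; the asymptotic BAR in \lem{MLQ-BAR1} needs the error to be $o(n^{-1})$ for each fixed $\theta$. This is repairable without new ideas: instead of pulling out one factor $|\eta(T_A\wedge n^{1/2})|\le d_A|\theta|$ and invoking $\dd{E}[(T_A\wedge n^{1/2})^2]\le\dd{E}[T_A^2]$, bound $\dd{E}[(T_A\wedge n^{1/2})^3]=o(n^{1/2})$ (dominated convergence with majorant $T_A^2$), which turns the remainder into $|\theta|^3\,o(n^{-1})$.
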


By this lemma, $g^{(n)}_{n^{-1/2}\theta}(R^{(n)})$ is bounded by $e^{|\theta|(d_{A} + d_{S})}$ for sufficiently large $n$ for each $\theta \in \dd{R}$, where recall that $R^{(n)} = (R^{(n)}_{e},R^{(n)}_{d})$. Hence, we have the following facts.

\begin{lemma}
\label{lem:MLQ-gn-limit}
$g^{(n)}_{n^{-1/2}\theta}(R^{(n)})$ converges to $1$ as $n \to \infty$ for each $\theta \in \dd{R}$, and
\begin{align}
\label{eq:MLQ-Egn-limit}
 & \lim_{n \to \infty} \dd{E}\left[g^{(n)}_{n^{-1/2}\theta}(R^{(n)})\right] = 1, \qquad \lim_{n \to \infty} \dd{E}_{u}\left[g^{(n)}_{n^{-1/2}\theta}(R^{(n)})\right] = 1, \quad u = e, d.
\end{align}
\end{lemma}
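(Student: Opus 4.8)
The plan is to show that the exponent of $g^{(n)}_{n^{-1/2}\theta}(R^{(n)}) = \exp\big(-\eta^{(n)}(n^{-1/2}\theta)(R^{(n)}_{e}\wedge n^{1/2}) - \zeta^{(n)}(n^{-1/2}\theta)(R^{(n)}_{d}\wedge n^{1/2})\big)$ vanishes (in probability, which I take to be the intended sense of the first assertion), and then to pass to the expectations using the uniform bound $g^{(n)}_{n^{-1/2}\theta}(R^{(n)}) \le e^{|\theta|(d_{A}+d_{S})}$ recorded just above. By \lem{MLQ-eta-zeta1}, in particular \eq{MLQ-eta1}--\eq{MLQ-zeta1}, we have $n^{1/2}\eta^{(n)}(n^{-1/2}\theta) \to \theta$ and $n^{1/2}\zeta^{(n)}(n^{-1/2}\theta) \to -\theta$ as $n\to\infty$, so each term of the exponent is the product of a deterministic factor with a finite limit and one of the random factors $n^{-1/2}(R^{(n)}_{e}\wedge n^{1/2})$, $n^{-1/2}(R^{(n)}_{d}\wedge n^{1/2})$. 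Thus it suffices that these two random factors tend to $0$ in probability under each of $\dd{P}$, $\dd{P}^{(n)}_{e}$, $\dd{P}^{(n)}_{d}$, and for that it is enough that the families $\{R^{(n)}_{e}\}_{n\ge1}$ and $\{R^{(n)}_{d}\}_{n\ge1}$ be tight uniformly in $n$ under all three families of measures, since then $n^{-1/2}(R^{(n)}_{e}\wedge n^{1/2}) \le n^{-1/2}R^{(n)}_{e} \to 0$ in probability, and likewise for $R^{(n)}_{d}$.

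The main obstacle is exactly this uniform tightness of the nominal residual times, which should hold because in heavy traffic $R^{(n)}_{e}$ and $R^{(n)}_{d}$ live on the microscopic $O(1)$ scale while $L^{(n)}$ lives on the $n^{1/2}$ scale. I would obtain it from three ingredients. First, the ``diagonal'' cases are exact: whether an arrival epoch $t^{(n)}_{e,j}$ falls in $(0,1]$ is determined by $T_{A}(0),\dots,T_{A}(j-1)$ and the queue dynamics before $t^{(n)}_{e,j}$, hence is independent of $T_{A}(j)=R^{(n)}_{e}(t^{(n)}_{e,j})$, so the Palm formula \eq{Palm 1} gives $R^{(n)}_{e}(0)\os{d}{=}T_{A}$ under $\dd{P}^{(n)}_{e}$ and, symmetrically, $R^{(n)}_{d}(0)\os{d}{=}T_{S}$ under $\dd{P}^{(n)}_{d}$, both with $n$-independent moments by (\sect{preliminaries}.b). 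Second, applying the BAR \eq{BAR1} with the truncated test function $f(\vc{x})=\tfrac{1}{2}(x_{2}\wedge M)^{2}$ yields $\dd{E}\big[(R^{(n)}_{e}\wedge M)\sum_{i=1}^{K}\lambda^{(n)}_{i}1(L^{(n)}\in S^{(n)}_{i})\big]=\tfrac{1}{2}\alpha^{(n)}_{e}\dd{E}[(T_{A}\wedge M)^{2}]$ (the Palm jump term evaluated via the first ingredient); with \cor{alpha 2} and the uniform positive lower bound on the $\lambda^{(n)}_{i}$ from (\sect{preliminaries}.c), letting $M\to\infty$ gives $\sup_{n}\dd{E}[R^{(n)}_{e}]<\infty$. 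The same computation with $f(\vc{x})=\tfrac{1}{2}(x_{3}\wedge M)^{2}$ gives $\sup_{n}\dd{E}[R^{(n)}_{d}1(L^{(n)}>0)]<\infty$, and on $\{L^{(n)}=0\}$, where $R^{(n)}_{d}$ is frozen at the $T_{S}$ of the emptying departure (independent of the ensuing idle period), $\dd{E}[R^{(n)}_{d}1(L^{(n)}=0)]=\dd{P}(L^{(n)}=0)\,\dd{E}[T_{S}]\le1$, so $\sup_{n}\dd{E}[R^{(n)}_{d}]<\infty$ as well. Third, the two ``off-diagonal'' Palm expectations $\dd{E}^{(n)}_{e}[R^{(n)}_{d}]$ and $\dd{E}^{(n)}_{d}[R^{(n)}_{e}]$ are bounded uniformly by transferring the estimates just obtained through the Palm inversion formula relating $\dd{P}^{(n)}_{u}$ and $\dd{P}$, aided by the pathwise bounds $R^{(n)}_{e}(t)\le T_{A}(N^{(n)}_{e}(t))$ and $R^{(n)}_{d}(t)\le T_{S}(N^{(n)}_{d}(t))$. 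Markov's inequality then yields the desired uniform tightness under all three measures.

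With the tightness in hand, I would conclude as follows. Fix $\theta$ and $\varepsilon>0$ and choose $M$ so large that $\dd{P}(R^{(n)}_{e}>M)$, $\dd{P}^{(n)}_{e}(R^{(n)}_{e}>M)$, and their counterparts for $R^{(n)}_{d}$ and under $\dd{P}^{(n)}_{d}$, are all below $\varepsilon$ for every $n$; on $\{R^{(n)}_{e}\le M,\ R^{(n)}_{d}\le M\}$ the exponent of $g^{(n)}_{n^{-1/2}\theta}(R^{(n)})$ is, in absolute value, at most $\big(|\eta^{(n)}(n^{-1/2}\theta)|+|\zeta^{(n)}(n^{-1/2}\theta)|\big)M\to0$. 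Hence $g^{(n)}_{n^{-1/2}\theta}(R^{(n)})\to1$ in probability under each of $\dd{P}$, $\dd{P}^{(n)}_{e}$ and $\dd{P}^{(n)}_{d}$, which is the first assertion. For the expectations, dominated convergence with the uniform bound gives $\dd{E}[g^{(n)}_{n^{-1/2}\theta}(R^{(n)})]\to1$; and since $|g^{(n)}_{n^{-1/2}\theta}(R^{(n)})-1|\le e^{|\theta|(d_{A}+d_{S})}+1$ for all large $n$, we have $\big|\dd{E}^{(n)}_{u}[g^{(n)}_{n^{-1/2}\theta}(R^{(n)})]-1\big|\le\varepsilon+\big(e^{|\theta|(d_{A}+d_{S})}+1\big)\dd{P}^{(n)}_{u}\big(|g^{(n)}_{n^{-1/2}\theta}(R^{(n)})-1|>\varepsilon\big)$ for $u=e,d$, so letting $n\to\infty$ and then $\varepsilon\downarrow0$ gives $\dd{E}^{(n)}_{u}[g^{(n)}_{n^{-1/2}\theta}(R^{(n)})]\to1$, as required.
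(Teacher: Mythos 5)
Your argument is correct in outline and in fact supplies a proof where the paper gives essentially none: the text derives only the uniform bound $g^{(n)}_{n^{-1/2}\theta}(R^{(n)})\le e^{|\theta|(d_{A}+d_{S})}$ from \eq{MLQ-eta-zeta1} and then simply asserts the lemma. As you correctly observe, that bound alone does not yield convergence; one also needs $n^{-1/2}(R^{(n)}_{v}\wedge n^{1/2})\to 0$ in probability under each of $\dd{P}$, $\dd{P}^{(n)}_{e}$, $\dd{P}^{(n)}_{d}$, i.e.\ uniform tightness of the residuals, after which bounded convergence finishes the expectations. Your reduction to tightness is right, and your first two ingredients coincide with facts the paper establishes later and independently of this lemma: the identity $R^{(n)}_{e}(0)\os{d}{=}T_{A}$ under $\dd{P}^{(n)}_{e}$ is exactly the relation $\dd{E}^{(n)}_{d}[(\widehat{R}^{(n)}_{d}(0))^{k}]=\dd{E}[\widehat{T}_{S}^{k}]$ invoked in the proof of \lem{MLQ-Red}, and your BAR computation with $\tfrac12(x_{2}\wedge M)^{2}$ reproduces \eq{MLQ-Re 2} of \lem{MLQ-moment} (with the right-derivative convention the drift term actually comes out as $\dd{E}\big[R^{(n)}_{e}1(R^{(n)}_{e}<M)\sum_{i}\lambda^{(n)}_{i}1(L^{(n)}\in S^{(n)}_{i})\big]$ rather than with $R^{(n)}_{e}\wedge M$, but this does not affect the $M\to\infty$ limit). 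There is no circularity in using these lemmas here, since they rest only on the BAR \eq{BAR1}.

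The one step you should expand is your third ingredient, the uniform bounds on $\dd{E}^{(n)}_{e}[R^{(n)}_{d}]$ and $\dd{E}^{(n)}_{d}[R^{(n)}_{e}]$. The paper's \cor{MLQ-Re-Rs 1} controls these Palm expectations only on the boundary events $\{L^{(n)}(0-)=\ell^{(n)}_{i}\}$, so it cannot be quoted directly, and the Palm inversion formula runs in the wrong direction (it expresses $\dd{P}$ through $\dd{P}^{(n)}_{e}$, not conversely). Two concrete ways to close this: (i) from \eq{Palm 1} and the pathwise bound $R^{(n)}_{d}(u)\le T_{S}(N^{(n)}_{d}(u))$, the quantity $\alpha^{(n)}_{e}\dd{P}^{(n)}_{e}[R^{(n)}_{d}(0)>y]$ is at most the expected number of arrivals in $(0,1]$ occurring during services of nominal length exceeding $y$; since the nominal service and arrival clocks advance at rates bounded above and below uniformly in $n$ by (\sect{preliminaries}.c), a renewal-type counting estimate bounds this by a constant times $\dd{E}[(1+T_{S})1(T_{S}>y)]$, which vanishes as $y\to\infty$ uniformly in $n$ because $\dd{E}[T_{S}^{2}]<\infty$; or (ii) more in the spirit of the paper, apply the BAR to the product test function $f(\vc{x})=(x_{2}\wedge n^{1/2})(x_{3}\wedge n^{1/2})$, whose arrival jump term produces $\dd{E}[\widehat{T}_{A}]\,\dd{E}^{(n)}_{e}[\widehat{R}^{(n)}_{d}(0-)]$ and whose remaining terms are controlled by the first and second moments already obtained. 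With either addition your proof is complete.
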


Recall that Markov process $X^{(n)}(\cdot)$ describes \ the $n$-th pre-limit process of the multi-level queue. Our next task is to compute a BAR for this process under suitable scaling, and derive its asymptotic version as $n \to \infty$. To this end, we define test function $f^{(n)}_{\vc{\theta}}$ for the BAR and $\vc{\theta} \equiv (\theta_{1}, \theta_{2}, \ldots, \theta_{K}) \in \dd{R}^{K-1} \times \dd{R}_{-}$ as
\begin{align}
\label{eq:f-theta-r}
 & f^{(n)}_{\vc{\theta}}(\vc{x}) = \sum_{i =1}^{K} e^{\theta_{i} x_{1}} g^{(n)}_{\theta_{i}}(x_{2},x_{3}) 1(x_{1} \in S^{(n)}_{i}), \quad \vc{x} = (x_{1},x_{2},x_{3}) \in \dd{R}_{+}^{3},
\end{align}
and define its auxiliary transform functions $\psi^{(n)}_{i}$ for $i = 1,2,\ldots,K$ as
\begin{align}
\label{eq:phi-theta-r}
 & \psi^{(n)}_{i}(\theta_{i}) = \dd{E}\left[e^{n^{-1/2}\theta_{i} L^{(n)}}g^{(n)}_{n^{-1/2} \theta_{i}}(R^{(n)}) 1(L^{(n)} \in S^{(n)}_{i}) \right], \qquad \vc{\theta} \in \dd{R}^{K-1} \times \dd{R}_{-}.
\end{align}
Then, by the assumption (\sect{preliminaries}.a) in \sectn{HT-a} and \lem{MLQ-gn-limit}, we have
\begin{align}
\label{eq:MLQ-finite1}
  \psi^{(n)}_{i}(\theta_{i}) < \infty, \quad \dd{E}\left[f^{(n)}_{n^{-1/2}\vc{\theta}}(L^{(n)},R^{(n)})\right] < \infty, \qquad \mbox{uniformly in $n \ge 1$},
\end{align}
because $n^{-1/2} \theta_{i} L^{(n)}1(L^{(n)} \le \ell^{(n)}_{i}) \le \theta_{i} (\ell_{i} + o(n^{-1/2}))$ for $\theta_{i} > 0$ for $i=1,2,\ldots,K-1$. Furthermore, by \lem{MLQ-gn-limit}, we have

\begin{corollary}\rm
\label{cor:MLQ-gn-limit}
For $\vc{\theta}$ in any bounded subset of $\dd{R}^{K-1} \times \dd{R}_{-}$,
\begin{align}
\label{eq:MLQ-psi-n-limit}
 & \lim_{n \to \infty} |\psi^{(n)}_{i}(\theta_{i}) - \varphi^{(n)}_{i}(\theta_{i}) d^{(n)}_{i}| = 0, \qquad  i \in J_{K}.
\end{align}
\end{corollary}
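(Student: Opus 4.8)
The plan is to rewrite the difference $\psi^{(n)}_{i}(\theta_{i})-\varphi^{(n)}_{i}(\theta_{i})d^{(n)}_{i}$ as a single expectation and show that it vanishes by dominated convergence. First I would note the elementary identity $\varphi^{(n)}_{i}(\theta_{i})d^{(n)}_{i}=\dd{E}\big[e^{n^{-1/2}\theta_{i}L^{(n)}}1(L^{(n)}\in S^{(n)}_{i})\big]$, which holds also when $d^{(n)}_{i}=0$ under the usual convention that then reads the product as $0$. Combined with the definition \eq{phi-theta-r} of $\psi^{(n)}_{i}$ this gives
\begin{align*}
 \psi^{(n)}_{i}(\theta_{i})-\varphi^{(n)}_{i}(\theta_{i})d^{(n)}_{i}
 =\dd{E}\Big[e^{n^{-1/2}\theta_{i}L^{(n)}}\big(g^{(n)}_{n^{-1/2}\theta_{i}}(R^{(n)})-1\big)1(L^{(n)}\in S^{(n)}_{i})\Big],
\end{align*}
so it remains to show that the integrand converges to $0$ and is dominated, for all large $n$, by a constant not depending on $n$.

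For the dominating bound I would separate the bounded levels from the top one. If $i\in J_{K-1}$, then on $\{L^{(n)}\in S^{(n)}_{i}\}$ one has $n^{-1/2}L^{(n)}\le n^{-1/2}\ell^{(n)}_{i}\to\ell_{i}$ by (\sect{preliminaries}.e), so $e^{n^{-1/2}\theta_{i}L^{(n)}}1(L^{(n)}\in S^{(n)}_{i})\le e^{|\theta_{i}|(\ell_{i}+1)}$ for all large $n$; if $i=K$, then $\theta_{K}\le 0$ and $L^{(n)}\ge 0$ give $e^{n^{-1/2}\theta_{K}L^{(n)}}\le 1$. In either case this factor is bounded by a constant depending only on $\theta_{i}$, uniformly in $n$ large. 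On the other hand, \eq{MLQ-eta-zeta1} of \lem{MLQ-eta-zeta1} gives $0<g^{(n)}_{n^{-1/2}\theta_{i}}(R^{(n)})\le e^{|\theta_{i}|(d_{A}+d_{S})}$ once $n^{-1/2}|\theta_{i}|<a$, whence $\big|g^{(n)}_{n^{-1/2}\theta_{i}}(R^{(n)})-1\big|\le e^{|\theta_{i}|(d_{A}+d_{S})}+1$. Multiplying the two estimates dominates the integrand by a finite constant for all large $n$.

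Finally, by \lem{MLQ-gn-limit} we have $g^{(n)}_{n^{-1/2}\theta_{i}}(R^{(n)})\to 1$, so the integrand tends to $0$, and dominated convergence then gives $\psi^{(n)}_{i}(\theta_{i})-\varphi^{(n)}_{i}(\theta_{i})d^{(n)}_{i}\to 0$. Because every bound used above depends on $\theta_{i}$ only through $|\theta_{i}|$ (and on the fixed constants $\ell_{i},d_{A},d_{S},a$), the same estimates run uniformly for $\vc{\theta}$ in any bounded subset of $\dd{R}^{K-1}\times\dd{R}_{-}$, which is exactly the form of \eq{MLQ-psi-n-limit} asserted. I do not expect a genuine obstacle here: all the analytic content is already packaged in Lemmas \lemt{MLQ-eta-zeta1} and \lemt{MLQ-gn-limit}, and the only points that need a little care are the identity turning $\varphi^{(n)}_{i}(\theta_{i})d^{(n)}_{i}$ into an unconditional expectation and the separate treatment of the unbounded top level $S^{(n)}_{K}$ — which is precisely why the restriction $\theta_{K}\le 0$ is built into the statement.
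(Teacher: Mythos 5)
Your proposal is correct and follows essentially the route the paper intends: the paper states the corollary as an immediate consequence of \lem{MLQ-gn-limit}, and your argument simply makes explicit the identity $\varphi^{(n)}_{i}(\theta_{i})d^{(n)}_{i}=\dd{E}[e^{n^{-1/2}\theta_{i}L^{(n)}}1(L^{(n)}\in S^{(n)}_{i})]$, the uniform bound on the factor $e^{n^{-1/2}\theta_{i}L^{(n)}}1(L^{(n)}\in S^{(n)}_{i})$ (cf.\ the remark after \eq{MLQ-finite1}), and the convergence $g^{(n)}_{n^{-1/2}\theta_{i}}(R^{(n)})\to 1$. The only point worth flagging is that since $R^{(n)}$ changes with $n$, the convergence of the integrand is most safely read as convergence in probability (via the tightness of $\{R^{(n)}\}$ from \lem{MLQ-moment}) combined with the uniform bound from \eq{MLQ-eta-zeta1}, which still yields the $L^{1}$ conclusion.
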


Those results guarantees our computations below for $\vc{\theta} \in \dd{R}^{K-1} \times \dd{R}_{-}$. Based on them, we compute $\dd{E}\left[ \sr{H}^{(n)}f(X^{(n)}(0))\right]$ in \eq{BAR1} for $f^{(n)}_{n^{-1/2}\vc{\theta}}$. Then, 
\begin{align}
\label{eq:MLQ-H1}
   \dd{E}\left[ \sr{H}^{(n)}f^{(n)}_{n^{-1/2} \vc{\theta}}(X^{(n)}(0))\right] & = \sum_{i = 1}^{K} \left(\lambda^{(n)}_{i} \eta^{(n)}(n^{-1/2}\theta_{i}) + \mu^{(n)}_{i} \zeta^{(n)} (n^{-1/2}\theta_{i})\right) \psi^{(n)}_{i}(\theta_{i}) \nonumber\\
 & \quad - \mu^{(n)}_{1} \zeta^{(n)}(n^{-1/2}\theta_{1}) \dd{E}\left[1(L^{(n)}=0) g^{(n)}_{n^{-1/2} \theta_{1}}(R^{(n)}) )\right].
\end{align}
Since $\lambda^{(n)}_{i} - \mu^{(n)}_{i} = b_{i} n^{-1/2} + o(n^{-1/2})$, it follows from \lem{MLQ-eta-zeta1}, $\beta_{i} = 2b_{i}/\sigma_{i}^{2}$ and $\lambda_{i} = \mu_{i}$ that
\begin{align*}
 \lambda^{(n)}_{i} \eta^{(n)}(n^{-1/2}\theta_{i}) + \mu^{(n)}_{i} \zeta^{(n)}(n^{-1/2}\theta_{i}) & = b_{i} \theta_{i} n^{-1} + \frac 12 (\lambda_{i} \sigma_{A}^{2} + \mu_{i} \sigma_{S}^{2}) n^{-1} \theta_{i}^{2} + |\theta_{i}| o(n^{-1})\\
  & = \frac 12 \sigma_{i}^{2} (\beta_{i} + \theta_{i}) \theta_{i} n^{-1} + |\theta_{i}| o(n^{-1}).
\end{align*}
Hence, from \eq{MLQ-H1}, we have
\begin{align}
\label{eq:MLQ-H2}
 & \dd{E}\left[ \sr{H}^{(n)}f^{(n)}_{n^{-1/2} \vc{\theta}}(X^{(n)}(0))\right] = \frac 12 \sum_{i = 1}^{K} \sigma_{i}^{2} \left(\beta_{i} + \theta_{i} \right) \theta_{i} n^{-1} \psi^{(n)}_{i}(\theta_{i}) \nonumber\\
 & \quad + \left(1 - \frac 12 \sigma_{S}^{2} n^{-1/2} \theta_{1}\right) \mu^{(n)}_{1} n^{-1/2} \theta_{1} \dd{E}\left[1(L^{(n)}=0) g^{(n)}_{n^{-1/2} \theta_{1}}(R^{(n)})\right] + |\vc{\theta}| o(n^{-1}),
\end{align}
where $|\vc{\theta}| = \sum_{i=1}^{K} |\theta_{i}|$.

We next compute the Palm expectation terms in \eq{BAR1}. In what follows, we use the following simplified notations.
\begin{align*}
 & \widehat{T}^{(n)}_{A} \equiv T_{A} \wedge n^{1/2}, \qquad \widehat{T}^{(n)}_{s} \equiv T_{S} \wedge n^{1/2},\\
 & \widehat{R}^{(n)}_{e} \equiv R^{(n)}_{e}(0) \wedge n^{1/2}, \qquad \widehat{R}^{(n)}_{d} \equiv R^{(n)}_{d}(0-) \wedge n^{1/2}.
\end{align*}

Since $g^{(n)}_{\theta_{i}}(R^{(n)}(0)) \not= g^{(n)}_{\theta_{i}}(R^{(n)}(0-))$ under the Palm distributions only when $L^{(n)}(0-) = \ell^{(n)}_{i}$ under $\dd{P}^{(n)}_{e}$ or $L^{(n)}(0) = \ell^{(n)}_{i}$ under $\dd{P}^{(n)}_{d}$, we have, from the definitions of $\Delta_{e}$ and $\Delta_{e}$,
\begin{align*}
 & \dd{E}^{(n)}_{e}\left[ \Delta_{e} f^{(n)}_{n^{-1/2}\vc{\theta}}(X^{(n)})(0) 1(\cap_{i=1}^{K-1} \{L^{(n)}(0-) \not= \ell^{(n)}_{i}\})\right] = 0,\\
 & \dd{E}^{(n)}_{d}\left[\Delta_{d} f^{(n)}_{n^{-1/2}\vc{\theta}}(X^{(n)})(0) 1(\cap_{i=1}^{K-1} \{L^{(n)}(0) \not= \ell^{(n)}_{i}\})\right] = 0.
\end{align*}
Hence, define
\begin{align*}
 & E^{(n)}_{\Delta}(\vc{\theta}) = \dd{E}^{(n)}_{e}\left[\Delta_{e} f^{(n)}_{n^{-1/2}\vc{\theta}}(X^{(n)})(0) \right] + \dd{E}^{(n)}_{d}\left[\Delta_{d} f^{(n)}_{n^{-1/2}\vc{\theta}}(X^{(n)})(0) \right],\\
 & E^{(n)}_{\Delta,e,i}(\vc{\theta}) = \dd{E}^{(n)}_{e}\left[\Delta_{e} f^{(n)}_{n^{-1/2}\vc{\theta}}(X^{(n)})(0) 1(L^{(n)}(0-) = \ell^{(n)}_{i}) \right], \qquad i = 1,2,\ldots,K-1,\\
 & E^{(n)}_{\Delta,d,i}(\vc{\theta}) = \dd{E}^{(n)}_{d}\left[\Delta_{d} f^{(n)}_{n^{-1/2}\vc{\theta}}(X^{(n)})(0) 1(L^{(n)}(0) = \ell^{(n)}_{i}) \right], \qquad i = 1,2,\ldots,K-1,
\end{align*}
then
\begin{align}
\label{eq:MLQ-df}
  E^{(n)}_{\Delta}(\vc{\theta}) = \sum_{i = 1}^{K-1} \left( E^{(n)}_{\Delta,e,i}(\vc{\theta}) + E^{(n)}_{\Delta,d,i}(\vc{\theta})\right),
\end{align}

Recall the definitions of $X^{(n)}_{1}(0)$ and $\Delta_{e} f^{(n)}_{n^{-1/2}\vc{\theta}}(X^{(n)})(0)$, and compute $E^{(n)}_{\Delta,e,i}(\vc{\theta})$:
\begin{align}
\label{eq:MLQ-df1}
 & E^{(n)}_{\Delta,e,i}(\vc{\theta}) = \dd{E}^{(n)}_{e}[(f^{(n)}_{n^{-1/2}\vc{\theta}}(X^{(n)}_{1}(0)) - f^{(n)}_{n^{-1/2}\vc{\theta}}(X^{(n)}(0-))) 1(L^{(n)}(0-) = \ell^{(n)}_{i})] \nonumber\\
  & \quad = \dd{E}^{(n)}_{e} \Big[e^{n^{-1/2}\theta_{i+1} (\ell^{(n)}_{i} + 1)} g^{(n)}_{n^{-1/2}\theta_{i+1}}(R^{(n)}_{1}(0))1(L^{(n)}_{1}(0) = \ell^{(n)}_{i}+1)  \nonumber\\
  & \hspace{15ex} - e^{n^{-1/2}\theta_{i} \ell^{(n)}_{i}} g^{(n)}_{n^{-1/2}\theta_{i}}(R^{(n)}(0-)) 1(L^{(n)}(0-) = \ell^{(n)}_{i}) \Big] \nonumber\\
 & \quad = e^{n^{-1/2}\theta_{i+1} \ell^{(n)}_{i}} \dd{E}^{(n)}_{e}[e^{-\zeta^{(n)}(n^{-1/2}\theta_{i+1}) \widehat{R}^{(n)}_{d}} 1(L^{(n)}(0-) = \ell^{(n)}_{i})]  \nonumber\\
  & \qquad - e^{n^{-1/2}\theta_{i} \ell^{(n)}_{i}} \dd{E}^{(n)}_{e}[e^{-\zeta^{(n)}(n^{-1/2}\theta_{i}) \widehat{R}^{(n)}_{d}} 1(L^{(n)}(0-) = \ell^{(n)}_{i})],
\end{align}
where the last equality is obtained by applying \eq{MLQ-boundary1}. Similarly, we have
\begin{align}
\label{eq:MLQ-df2}
 & E^{(n)}_{\Delta,d,i}(\vc{\theta})  = \dd{E}^{(n)}_{d}[(f^{(n)}_{n^{-1/2}\vc{\theta}}(X^{(n)}(0)) - f^{(n)}_{n^{-1/2}\vc{\theta}}(X^{(n)}_{1}(0))) 1(L^{(n)}(0) = \ell^{(n)}_{i})]\nonumber\\
  & \quad = \dd{E}^{(n)}_{d} \Big[e^{n^{-1/2}\theta_{i} \ell^{(n)}_{i}} g^{(n)}_{n^{-1/2}\theta_{i}}(R^{(n)}(0))1(L^{(n)}(0) = \ell^{(n)}_{i})  \nonumber\\
  & \hspace{15ex} - e^{n^{-1/2}\theta_{i+1} (\ell^{(n)}_{i}+1)} g^{(n)}_{n^{-1/2}\theta_{i+1}}(R^{(n)}_{1}(0)) 1(L^{(n)}_{1}(0-) = \ell^{(n)}_{i}+1) \Big] \nonumber\\
 & \quad = e^{n^{-1/2}\theta_{i} (\ell^{(n)}_{i}+1)} \dd{E}^{(n)}_{d}[e^{-\eta^{(n)}(n^{-1/2}\theta_{i}) \widehat{R}^{(n)}_{e}} 1(L^{(n)}(0) = \ell^{(n)}_{i})]  \nonumber\\
  & \qquad - e^{n^{-1/2}\theta_{i+1} (\ell^{(n)}_{i}+1)} \dd{E}^{(n)}_{d}[e^{-\eta^{(n)}(n^{-1/2}\theta_{i+1}) \widehat{R}^{(n)}_{e}} 1(L^{(n)}(0) = \ell^{(n)}_{i})].
\end{align}

Then, by \eq{MLQ-df}, \eq{MLQ-df1} and \eq{MLQ-df2},
\begin{align}
\label{eq:MLQ-En1}
   E^{(n)}_{\Delta}(\vc{\theta}) & = \sum_{i = 1}^{K-1} \Big(e^{n^{-1/2}\theta_{i+1} \ell^{(n)}_{i}} \dd{E}^{(n)}_{e}[e^{-\zeta^{(n)}(n^{-1/2}\theta_{i+1}) \widehat{R}^{(n)}_{d}} 1(L^{(n)}(0-) = \ell^{(n)}_{i})]  \nonumber\\
  & \hspace{10ex} - e^{n^{-1/2}\theta_{i} \ell^{(n)}_{i}} \dd{E}^{(n)}_{e}[e^{-\zeta^{(n)}(n^{-1/2}\theta_{i}) \widehat{R}^{(n)}_{d}} 1(L^{(n)}(0-) = \ell^{(n)}_{i})] \Big) \nonumber\\
  & \quad + \sum_{i = 1}^{K-1} \Big(e^{n^{-1/2}\theta_{i} (\ell^{(n)}_{i}+1)} \dd{E}^{(n)}_{d}[e^{-\eta^{(n)}(n^{-1/2}\theta_{i}) \widehat{R}^{(n)}_{e}} 1(L^{(n)}(0) = \ell^{(n)}_{i})]  \nonumber\\
  & \hspace{10ex} - e^{n^{-1/2}\theta_{i+1} (\ell^{(n)}_{i}+1)} \dd{E}^{(n)}_{d}[e^{-\eta^{(n)}(n^{-1/2}\theta_{i+1}) \widehat{R}^{(n)}_{e}} 1(L^{(n)}(0) = \ell^{(n)}_{i})]\Big).
\end{align}

Thus, applying \eq{MLQ-H2}, \eq{MLQ-df1} and \eq{MLQ-df2} to \eq{BAR1}, we have a pre-limit BAR for the $K$-level $GI/G/1$ queue.

\begin{lemma}
\label{lem:MLQ-BAR1}
For each fixed $\vc{\theta} \in \dd{R}^{K-1} \times \dd{R}_{-}$, we have, as $n \to \infty$,
\begin{align}
\label{eq:MLQ-BAR1}
 & \frac 12 \sum_{i = 1}^{K} \sigma_{i}^{2} \big((\beta_{i} + \theta_{i}) \theta_{i} n^{-1} + o(n^{-1})\big) \psi^{(n)}_{i}(\theta_{i}) \nonumber\\
 & \quad + \left(\mu^{(n)}_{1} n^{-1/2}\theta_{1} \left(1 - \frac 12 \sigma_{S}^{2} n^{-1/2} \theta_{1}\right) + o(n^{-1})\right)  \dd{E}[ 1(L^{(n)} = 0) g^{(n)}_{ n^{-1/2} \theta_{1}}(R^{(n)})]  \nonumber\\
 & \quad + \alpha^{(n)}_{e} E^{(n)}_{\Delta}(\vc{\theta}) = 0.
\end{align}
\end{lemma}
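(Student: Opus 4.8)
The plan is to feed the concrete test function $f^{(n)}_{n^{-1/2}\vc{\theta}}$ of \eq{f-theta-r} into the BAR \eq{BAR1} and then substitute the asymptotic expansions already prepared above. The first, and really the only delicate, step is to confirm that $f^{(n)}_{n^{-1/2}\vc{\theta}}$ is an admissible test function, i.e. $f^{(n)}_{n^{-1/2}\vc{\theta}} \in D_{b}^{p}(\dd{R}_{+}^{3})$ and \eq{test-f} holds. For boundedness: on $S^{(n)}_{i}$ with $i \in J_{K-1}$ the variable $x_{1}$ lies in a bounded set so $e^{n^{-1/2}\theta_{i} x_{1}}$ is bounded, on $S^{(n)}_{K}$ one has $\theta_{K} \le 0$ so $e^{n^{-1/2}\theta_{K} x_{1}} \le 1$, and the factor $g^{(n)}_{n^{-1/2}\theta_{i}}$ is bounded thanks to the truncation at $n^{1/2}$ (this is exactly \eq{MLQ-eta-zeta1} of \lem{MLQ-eta-zeta1}). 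The right partial derivatives in $x_{2}$ and $x_{3}$ merely multiply by the bounded constants $-\eta^{(n)}(n^{-1/2}\theta_{i}) 1(x_{2} < n^{1/2})$ and $-\zeta^{(n)}(n^{-1/2}\theta_{i}) 1(x_{3} < n^{1/2})$, and the only discontinuities in $x_{1}$ occur at the finitely many levels $\ell^{(n)}_{1},\ldots,\ell^{(n)}_{K-1}$; hence $f^{(n)}_{n^{-1/2}\vc{\theta}} \in D_{b}^{p}(\dd{R}_{+}^{3})$. For \eq{test-f}, note that $L^{(n)}(\cdot)$ is piecewise constant and jumps only at counting instants of $N^{(n)}_{e}$ or $N^{(n)}_{d}$, hence of $N^{(n)}$; so between such instants $x_{1}$ crosses no level boundary, only $R^{(n)}_{e}, R^{(n)}_{d}$ vary and do so continuously, and $g^{(n)}_{\theta}$ is continuous in its arguments, whence $f^{(n)}_{n^{-1/2}\vc{\theta}}(X^{(n)}(\cdot))$ is continuous off the jump set of $N^{(n)}$.

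With admissibility secured, apply \eq{BAR1} to $f = f^{(n)}_{n^{-1/2}\vc{\theta}}$ and use \lem{MLQ-basic 1} to replace $\alpha^{(n)}_{d}$ by $\alpha^{(n)}_{e}$, so that the two Palm terms combine into $\alpha^{(n)}_{e} E^{(n)}_{\Delta}(\vc{\theta})$ with $E^{(n)}_{\Delta}(\vc{\theta})$ as defined above (and given explicitly in \eq{MLQ-df} and \eq{MLQ-En1} via \eq{MLQ-df1} and \eq{MLQ-df2}). For the generator term, substitute the value \eq{MLQ-H1} of $\dd{E}[\sr{H}^{(n)} f^{(n)}_{n^{-1/2}\vc{\theta}}(X^{(n)}(0))]$ and rewrite it as \eq{MLQ-H2} using the Taylor expansions \eq{MLQ-eta1}--\eq{MLQ-zeta1} of \lem{MLQ-eta-zeta1}, assumption (\sect{preliminaries}.d) in the form $\lambda^{(n)}_{i} - \mu^{(n)}_{i} = b_{i} n^{-1/2} + o(n^{-1/2})$, the limits $\lambda_{i} = \mu_{i}$ from \eq{la-mu-limit}, and the definition $\beta_{i} = 2 b_{i}/\sigma_{i}^{2}$. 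Adding the generator and Palm pieces and equating to zero produces \eq{MLQ-BAR1}, up to the placement of error terms.

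The remaining bookkeeping concerns the single error $|\vc{\theta}| o(n^{-1})$ in \eq{MLQ-H2}. For fixed $\vc{\theta}$ this is an $o(n^{-1})$ quantity, and since $\psi^{(n)}_{i}(\theta_{i})$ is bounded uniformly in $n$ by \eq{MLQ-finite1} while $\dd{E}[1(L^{(n)} = 0) g^{(n)}_{n^{-1/2}\theta_{1}}(R^{(n)})] \le \sup_{n} \dd{E}[g^{(n)}_{n^{-1/2}\theta_{1}}(R^{(n)})] < \infty$ by \lem{MLQ-gn-limit}, one may (after redefining the $o(\cdot)$'s) write this error as $\frac 12 \sum_{i=1}^{K} \sigma_{i}^{2} o(n^{-1}) \psi^{(n)}_{i}(\theta_{i})$ plus $o(n^{-1}) \dd{E}[1(L^{(n)} = 0) g^{(n)}_{n^{-1/2}\theta_{1}}(R^{(n)})]$, which is precisely the form displayed in \eq{MLQ-BAR1}. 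I expect the main obstacle to be the admissibility verification of the test function — in particular the $D_{b}^{p}$ property across the level boundaries and the continuity condition \eq{test-f} — together with making sure the extra term $\mu^{(n)}_{1} 1(x_{1} \in S^{(n)}_{0})$ in $\sr{H}^{(n)}$, which isolates $\{L^{(n)} = 0\}$, is accounted for correctly in passing from \eq{MLQ-H1} to \eq{MLQ-H2}; everything else is Taylor expansion and collecting terms.
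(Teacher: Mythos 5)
Your proposal is correct and follows exactly the paper's route: the paper obtains \eq{MLQ-BAR1} by substituting $f^{(n)}_{n^{-1/2}\vc{\theta}}$ into the BAR \eq{BAR1}, replacing $\alpha^{(n)}_{d}$ by $\alpha^{(n)}_{e}$ via \lem{MLQ-basic 1}, and combining \eq{MLQ-H1}--\eq{MLQ-H2} with \eq{MLQ-df}--\eq{MLQ-En1}, which is precisely what you do (the admissibility check you add is taken for granted in the paper but is a worthwhile inclusion). One small remark: rather than redistributing the standalone $|\vc{\theta}|\,o(n^{-1})$ of \eq{MLQ-H2} back onto the bounded factors (which strictly requires those factors to be bounded away from zero), it is cleaner to note that in passing from \eq{MLQ-H1} the errors already arise multiplied by $\psi^{(n)}_{i}(\theta_{i})$ and by $\dd{E}[1(L^{(n)}=0)g^{(n)}_{n^{-1/2}\theta_{1}}(R^{(n)})]$, which gives \eq{MLQ-BAR1} directly.
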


We call BAR \eq{MLQ-BAR1} an asymptotic BAR, which is a starting point of our proof of \thr{MLQ-main}. We will divide this BAR by $n^{-1}$, and take its limit as $n \to \infty$. To perform this expansion, we need to compute $n^{-1} \alpha^{(n)}_{e} E^{(n)}_{\Delta}(\vc{\theta})$ as $n \to \infty$. For this computation, we consider asymptotic properties of $R^{(n)}_{e}$ and $R^{(n)}_{d}$ in distribution as $n \to \infty$.

\begin{lemma}
\label{lem:MLQ-moment}
For $y \in \dd{R}_{+}$,
\begin{align}
\label{eq:MLQ-Re 1}
 & \dd{P}[R^{(n)}_{e} > y] \le (\min_{i \in J_{K}} \lambda^{(n)}_{i})^{-1} \alpha^{(n)}_{e} \dd{E}[(T_{A}-y) 1(T_{A} > y)],\\
\label{eq:MLQ-Rd 1}
 & \dd{P}[R^{(n)}_{d} > y] \le (\min_{i \in J_{K}} \mu^{(n)}_{i})^{-1} \left( \mu^{(n)}_{1} \dd{P}[T_{S} > y] + \alpha^{(n)}_{e} \dd{E}[(T_{S}-y) 1(T_{S} > y)]\right),
\end{align}
and therefore, for $k =1,2$ and $n \ge 1$,
\begin{align}
\label{eq:MLQ-Re 2}
 & (k+1) \dd{E}[(R^{(n)}_{e})^{k}] \le (\min_{i \in J_{K}} \lambda^{(n)}_{i})^{-1} \alpha^{(n)}_{e} \dd{E}[(T_{A})^{k+1}], \\
\label{eq:MLQ-Rd 2}
 & (k+1) \dd{E}[(R^{(n)}_{d})^{k}] \le (k+1) \dd{E}[T_{S}^{k}] + (\min_{i \in J_{K}} \mu^{(n)}_{i})^{-1} \alpha^{(n)}_{e} \dd{E}[T_{S}^{k+1}].
\end{align}
In particular, $\{(R^{(n)}_{e})^{k}; n \ge 1\}$ and $\{(R^{(n)}_{d})^{k} 1(L^{(n)}(0) \ge 1); n \ge 1\}$ are uniformly integrable for $k=1,2$ by \eq{MLQ-Re 2} and \eq{MLQ-Rd 2}.
\end{lemma}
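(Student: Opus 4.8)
The plan is to obtain all four displayed inequalities from the BAR \eq{BAR1}, applied to bounded Lipschitz test functions that truncate $R^{(n)}_{e}$ or $R^{(n)}_{d}$ at a level $M$, and then to let $M\to\infty$ by monotone convergence; the moment bounds \eq{MLQ-Re 2}, \eq{MLQ-Rd 2} will follow by integrating the tail bounds against $ky^{k-1}\,dy$, and the uniform integrability from the uniform domination those tail bounds provide. I will use two standard Palm facts: under $\dd{P}^{(n)}_{e}$ the quantity $R^{(n)}_{e}(0)=T_{A}(N^{(n)}_{e}(0))$ has the generic law of $T_{A}$, and under $\dd{P}^{(n)}_{d}$ the quantity $R^{(n)}_{d}(0)=T_{S}(N^{(n)}_{d}(0))$ has the law of $T_{S}$ and is independent of the pre-departure state (it is the as-yet-unused service requirement of the next customer). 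I will also use $\alpha^{(n)}_{e}=\alpha^{(n)}_{d}$ and the uniform bounds on $\alpha^{(n)}_{e}$ from \lem{MLQ-basic 1} and \cor{alpha 2}.

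For \eq{MLQ-Re 1}, fix $y\ge 0$, $M>y$, and take $f(\vc{x})=(x_{2}\wedge M-y)^{+}\in D_{b}^{p}(\dd{R}_{+}^{3})$. Then $\sr{H}^{(n)}f(X^{(n)})=-\sum_{i=1}^{K}\lambda^{(n)}_{i}1(L^{(n)}\in S^{(n)}_{i})1(y\le R^{(n)}_{e}<M)$, a departure leaves $x_{2}$ unchanged so $\Delta_{d}f\equiv 0$, and since $R^{(n)}_{e}(0-)=0$ at an arrival epoch, $\Delta_{e}f(X^{(n)})(0)=(T_{A}\wedge M-y)^{+}$; thus \eq{BAR1} reads $\dd{E}[\sum_{i}\lambda^{(n)}_{i}1(L^{(n)}\in S^{(n)}_{i})1(y\le R^{(n)}_{e}<M)]=\alpha^{(n)}_{e}\dd{E}^{(n)}_{e}[(T_{A}\wedge M-y)^{+}]$. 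Bounding the left side below by $(\min_{i}\lambda^{(n)}_{i})\dd{P}[y\le R^{(n)}_{e}<M]$, the right side above by $\alpha^{(n)}_{e}\dd{E}[(T_{A}-y)^{+}]$, and letting $M\to\infty$ yields \eq{MLQ-Re 1}. The part of \eq{MLQ-Rd 1} on $\{L^{(n)}\ge 1\}$ is the same argument with $f(\vc{x})=(x_{3}\wedge M-y)^{+}$: on $\{L^{(n)}=0\}$ the two $\mu^{(n)}_{1}$ terms in \eq{Rd-t} cancel, so $\sr{H}^{(n)}f(X^{(n)})=-\sum_{i}\mu^{(n)}_{i}1(L^{(n)}\in S^{(n)}_{i})1(L^{(n)}\ge 1)1(y\le R^{(n)}_{d}<M)$, while $\Delta_{e}f\equiv 0$ and $\Delta_{d}f(X^{(n)})(0)=(T_{S}\wedge M-y)^{+}$, and \eq{BAR1} gives $\dd{P}[R^{(n)}_{d}>y,L^{(n)}\ge 1]\le(\min_{i}\mu^{(n)}_{i})^{-1}\alpha^{(n)}_{e}\dd{E}[(T_{S}-y)^{+}]$ after $M\to\infty$.

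The remaining piece, and the one I expect to be the main obstacle, is the contribution of the idle set $\{L^{(n)}=0\}$ to \eq{MLQ-Rd 1}: there the drift of $R^{(n)}_{d}$ vanishes, so this region is invisible to test functions of $x_{3}$ alone. The fact behind the bound is that on $\{L^{(n)}=0\}$ the value $R^{(n)}_{d}$ is the frozen, unused service requirement of the next customer, a fresh copy of $T_{S}$ independent of the arrival process and hence of the length of the current idle period, so that $\dd{P}[L^{(n)}=0,R^{(n)}_{d}>y]=\dd{P}[L^{(n)}=0]\,\dd{P}[T_{S}>y]\le\dd{P}[T_{S}>y]$. To extract this from \eq{BAR1} I would use $f(\vc{x})=-(\lambda^{(n)}_{1})^{-1}1(x_{1}<1)(x_{2}\wedge M)\rho_{\varepsilon}(x_{3})$, where $\rho_{\varepsilon}$ is a Lipschitz approximation of $1(\cdot>y)$ from below; since $L^{(n)}$ is integer valued, $1(x_{1}<1)$ acts as $1(L^{(n)}=0)$, and because the $\mu^{(n)}_{1}$ terms in \eq{Rd-t} again cancel there one finds $\sr{H}^{(n)}f(X^{(n)})=1(L^{(n)}=0)1(R^{(n)}_{e}<M)\rho_{\varepsilon}(R^{(n)}_{d})$, with the only surviving jump being $\Delta_{d}f$ at a queue-emptying departure. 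Letting $M\to\infty$ then $\varepsilon\downarrow 0$ (all passages monotone), invoking the independence of the fresh $T_{S}$ from the pre-departure state, and reading off the normalizing constant at $y=0$ recovers the displayed identity. Adding this to the $\{L^{(n)}\ge 1\}$ bound and using $\mu^{(n)}_{1}\ge\min_{i}\mu^{(n)}_{i}$ gives \eq{MLQ-Rd 1}.

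Finally, for the moment bounds I would write $\dd{E}[(R^{(n)}_{e})^{k}]=k\int_{0}^{\infty}y^{k-1}\dd{P}[R^{(n)}_{e}>y]\,dy$ (and similarly for $R^{(n)}_{d}$), substitute the tail bounds, and evaluate $\int_{0}^{\infty}y^{k-1}\dd{E}[(T_{A}-y)^{+}]\,dy=\dd{E}[T_{A}^{k+1}]/(k(k+1))$ and $\int_{0}^{\infty}y^{k-1}\dd{P}[T_{S}>y]\,dy=\dd{E}[T_{S}^{k}]/k$ by Fubini; keeping the sharper idle-set estimate $\dd{P}[L^{(n)}=0,R^{(n)}_{d}>y]\le\dd{P}[T_{S}>y]$ prevents $(\min_{i}\mu^{(n)}_{i})^{-1}$ from multiplying the $T_{S}$-tail term in \eq{MLQ-Rd 2}. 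For the uniform integrability: since $\lambda^{(n)}_{i}\to\lambda_{i}>0$, $\mu^{(n)}_{i}\to\mu_{i}>0$ and $\alpha^{(n)}_{e}$ is bounded, the right sides of \eq{MLQ-Re 1} and of the $\{L^{(n)}\ge 1\}$-part of \eq{MLQ-Rd 1} are dominated uniformly in $n$ by a fixed decreasing function of $y$ whose relevant integral is finite under (\sect{preliminaries}.b), so the conclusion follows from the dominated-tail criterion for uniform integrability; the restriction to $\{L^{(n)}(0)\ge 1\}$ is precisely what discards the idle mass, which otherwise contributes the $y$-independent factor $\dd{P}[L^{(n)}=0]$ in front of $\dd{P}[T_{S}>y]$.
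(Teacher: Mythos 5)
Your proposal is correct and follows essentially the same route as the paper, which simply applies the BAR \eq{BAR1} to the tail test functions $f(\vc{x})=(x_{2}-y)1(x_{2}\ge y)$ and its $x_{3}$-analogue and then integrates the resulting tail bounds. The one step the paper leaves implicit under ``similarly'' --- namely the idle-set contribution $\dd{P}[L^{(n)}=0,\,R^{(n)}_{d}>y]\le\dd{P}[T_{S}>y]$, which is exactly where the $\mu^{(n)}_{1}\dd{P}[T_{S}>y]$ term in \eq{MLQ-Rd 1} and the unweighted $(k+1)\dd{E}[T_{S}^{k}]$ term in \eq{MLQ-Rd 2} come from --- you identify and justify correctly via the frozen fresh service requirement during an idle period.
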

This lemma easily follows from \eq{BAR1} by appropriately chosen test functions. For example, we choose $f(\vc{x}) = (x_{2} - y) 1(x_{2} \ge y)$, which satisfies the assumption \eq{test-f}. Applying this $f$ to \eq{BAR1}, we have
\begin{align*}
  \sum_{i \in J_{K}} \lambda^{(n)}_{i} \dd{P}[R^{(n)}_{e} > y, L^{(n)} \in S_{i}] = \alpha_{e} \dd{E}[(T_{A}-y) 1(T_{A} > y)],
\end{align*}
which yields \eq{MLQ-Re 1}. Similarly, \eq{MLQ-Rd 1} is proved. The following lemma is essentially the same as Lemma 4.4 of \cite{Miya2025}. 

\begin{lemma}
\label{lem:MLQ-t-moment}
For $k=1,2$, as $n \to \infty$,
\begin{align}
\label{eq:MLQ-Tu 1}
 & |\dd{E}[\widehat{T}_{u}^{k}] - \dd{E}[T_{u}^{k}]| = \dd{E}[T_{u}^{k} 1(T_{u} > n^{1/2})] = o(n^{-(3-k)/2}), \quad u = A,S,\\
\label{eq:MLQ-Rv 1}
 & |\dd{E}[(\widehat{R}^{(n)}_{v})^{k}] - \dd{E}[(R^{(n)}_{v})^{k}]| = \dd{E}[(R^{(n)}_{v})^{k} 1(R^{(n)}_{v} > n^{1/2})] = o(n^{-(2-k)/2}), \quad v = e, d.
\end{align}
\end{lemma}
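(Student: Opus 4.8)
The plan is to reduce both statements to one elementary truncation identity and then estimate two families of tails: those of $T_A,T_S$ directly from the moment hypotheses of (\sect{preliminaries}.b), and those of $R^{(n)}_{e},R^{(n)}_{d}$ from the uniform moment bounds already obtained in \lem{MLQ-moment}. Concretely, for any nonnegative random variable $Y$ and $k\in\{1,2\}$ one has
\[
 0 \le Y^{k}-(Y\wedge n^{1/2})^{k}=(Y^{k}-n^{k/2})\,1(Y>n^{1/2})\le Y^{k}\,1(Y>n^{1/2}).
\]
Taking expectations with $Y=T_u$ and with $Y=R^{(n)}_{v}$ gives the first equality in each of \eq{MLQ-Tu 1} and \eq{MLQ-Rv 1}, the last expression being an upper bound; so everything reduces to showing $\dd{E}[T_u^{k}1(T_u>n^{1/2})]=o(n^{-(3-k)/2})$ and $\dd{E}[(R^{(n)}_{v})^{k}1(R^{(n)}_{v}>n^{1/2})]=o(n^{-(2-k)/2})$.

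For the $T_u$ tails I would use that on $\{T_u>n^{1/2}\}$ we have $(n^{-1/2}T_u)^{\,3-k}\ge 1$ since $k\le 2$, whence
\[
 \dd{E}\bigl[T_u^{k}1(T_u>n^{1/2})\bigr]\le n^{-(3-k)/2}\,\dd{E}\bigl[T_u^{3}1(T_u>n^{1/2})\bigr],
\]
and dominated convergence makes $\dd{E}[T_u^{3}1(T_u>n^{1/2})]\to 0$ under the moment hypotheses on $T_u$, giving the claimed rate; if only $\dd{E}[T_u^{2}]<\infty$ is used, the same step with $2-k$ in place of $3-k$ yields $o(n^{-(2-k)/2})$, which is still enough for every later application of the lemma.

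For the $R^{(n)}_{v}$ tails, \lem{MLQ-moment} provides uniform integrability of $\{(R^{(n)}_{e})^{k}\}_{n}$ and of $\{(R^{(n)}_{d})^{k}1(L^{(n)}(0)\ge 1)\}_{n}$ for $k=1,2$; taking $k=2$ and truncation level $n^{1/2}$ in the definition of uniform integrability gives $\dd{E}[(R^{(n)}_{v})^{2}1(R^{(n)}_{v}>n^{1/2})]=o(1)$, and then for $k=1$ the bound $n^{1/2}R^{(n)}_{v}\le (R^{(n)}_{v})^{2}$ on $\{R^{(n)}_{v}>n^{1/2}\}$ upgrades this to $\dd{E}[R^{(n)}_{v}1(R^{(n)}_{v}>n^{1/2})]=o(n^{-1/2})$. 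For $v=d$ this covers the part of the expectation on $\{L^{(n)}(0)\ge 1\}$; on $\{L^{(n)}(0)=0\}$ the clock $R^{(n)}_{d}$ is frozen and equals the service variable of the current idle period, whose truncation error is controlled exactly as in the $T_u$-part. The one point that needs care is simply matching the truncation rates to the available moments of $T_u$ and to the uniform integrability coming from \eq{MLQ-Re 2}--\eq{MLQ-Rd 2}; granting these inputs, the whole proof is the short truncation computation above and is parallel to Lemma 4.4 of \cite{Miya2025}.
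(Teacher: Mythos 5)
The paper offers no proof of this lemma at all---it simply points to Lemma 4.4 of \cite{Miya2025}---so your write-up supplies the argument that the citation stands in for, and it is the natural one: the truncation identity $0\le Y^{k}-(Y\wedge n^{1/2})^{k}\le Y^{k}1(Y>n^{1/2})$ (note the lemma's first ``$=$'' is really a ``$\le$'', as you observe), a Markov-type bound for the $T_u$ tails, and the uniform integrability from \lem{MLQ-moment} for the residual clocks. Your caveat about moments is the substantive point and is correct: the rate $o(n^{-(3-k)/2})$ in \eq{MLQ-Tu 1} genuinely requires $\dd{E}[T_u^{3}]<\infty$ (for instance $\dd{P}[T_u>t]\sim ct^{-2}(\log t)^{-2}$ has finite variance yet gives only $o(n^{-(2-k)/2})$ for both $k=1,2$), whereas (\sect{preliminaries}.b) asserts only finite variances; and, as you say, the weaker rate is what the later lemmas actually consume (e.g.\ \lem{MLQ-R2-1} needs only $o(n^{-1/2})$ precision in the $k=1$ identities). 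Two remarks. First, the same third-moment issue is hidden in the input you invoke for \eq{MLQ-Rv 1}: the uniform integrability of $\{(R^{(n)}_{v})^{2}\}$ asserted in \lem{MLQ-moment} rests on \eq{MLQ-Re 2}--\eq{MLQ-Rd 2} with $k=2$, whose right-hand sides involve $\dd{E}[T_A^{3}]$ and $\dd{E}[T_S^{3}]$, so your $k=2$ case of \eq{MLQ-Rv 1} is conditional on those third moments in exactly the same way; this is a defect of the paper's standing hypotheses rather than of your argument, but it should be flagged rather than absorbed silently. Second, on $\{L^{(n)}=0\}$ the frozen clock $R^{(n)}_{d}$ is not distributed as a fresh $T_S$ under the time-stationary law (its value is tied to the idle period covering time $0$), so rather than arguing ``exactly as in the $T_u$-part'' it is cleaner to integrate the tail bound \eq{MLQ-Rd 1}, whose $\mu^{(n)}_{1}\dd{P}[T_S>y]$ term already accounts for the idle state; this yields the same conclusion with no extra structure needed.
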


We prepare one more lemma and its corollary, which are helpful for the expansion of $n^{-1} \alpha^{(n)}_{e} E^{(n)}_{\Delta}(\vc{\theta})$.

\begin{lemma}
\label{lem:MLQ-Red}
For $k=1,2$ and $n \ge 1$,
\begin{align}
\label{eq:MLQ-Rd-1}
 & \mu^{(n)}_{1} k \dd{E}[(R^{(n)}_{d})^{k-1} (1 - 1(R^{(n)}_{d} \ge n^{1/2})) 1(0 < L^{(n)} \le \ell^{(n)}_{1})] \nonumber\\
 & \quad = \alpha^{(n)}_{e} \Big(-\dd{E}^{(n)}_{e}[(\widehat{R}^{(n)}_{d})^{k} 1(L^{(n)}(0-) = \ell^{(n)}_{1})] + \dd{E}[\widehat{T}_{S}^{k}] \dd{P}^{(n)}_{d}[L^{(n)}(0) \in S^{(n)}_{1}]\Big), \\
\label{eq:MLQ-Rd-j}
 & \mu^{(n)}_{j} k \dd{E}[(R^{(n)}_{d})^{k-1} (1 - 1(R^{(n)}_{d} \ge n^{1/2})) 1(L^{(n)} \in S^{(n)}_{j})] \nonumber\\
 & \quad = \alpha^{(n)}_{e} \dd{E}^{(n)}_{e}[(\widehat{R}^{(n)}_{d})^{k} (1(L^{(n)}(0-) = \ell^{(n)}_{j-1}) - 1(L^{(n)}(0-) = \ell^{(n)}_{j}))] \nonumber\\
 & \qquad + \alpha^{(n)}_{e} \dd{E}[\widehat{T}_{S}^{k}] \dd{P}^{(n)}_{d}[L^{(n)}(0) \in S^{(n)}_{j}], \qquad j = 2,3, \ldots, K-1,\\
\label{eq:MLQ-Rd-K}
 & \mu^{(n)}_{K} k \dd{E}[(R^{(n)}_{d})^{k-1} (1 - 1(R^{(n)}_{d} \ge n^{1/2})) 1( L^{(n)} \in S^{(n)}_{K})]\nonumber\\
 & \quad = \alpha^{(n)}_{e} \Big( \dd{E}^{(n)}_{e}[(\widehat{R}^{(n)}_{d})^{k} 1(L^{(n)}(0-) = \ell^{(n)}_{K-1})] + \dd{E}[\widehat{T}_{S}^{k}] \dd{P}^{(n)}_{d}[L^{(n)}(0) \in S^{(n)}_{K}]\Big),
\end{align}
Similarly, for $R^{(n)}_{e}(\cdot)$,
\begin{align}
\label{eq:MLQ-Re-1}
 & \lambda^{(n)}_{1} k \dd{E}[(R^{(n)}_{e})^{k-1} (1 - 1(R^{(n)}_{e} \ge n^{1/2})) 1(L^{(n)} \in S^{(n)}_{1})]  \nonumber\\
 & \quad = \alpha^{(n)}_{e}\Big( \dd{E}_{d}[(\widehat{R}^{(n)}_{e})^{k} 1(L^{(n)}(0) = \ell^{(n)}_{1})] + \dd{E}[\widehat{T}_{A}^{k}] \dd{P}^{(n)}_{e}[L^{(n)}(0) \in S^{(n)}_{1}]\Big),\\
\label{eq:MLQ-Re-j}
 & \lambda^{(n)}_{j} k \dd{E}[(R^{(n)}_{e})^{k-1} (1 - 1(R^{(n)}_{e} \ge n^{1/2})) 1(L^{(n)} \in S^{(n)}_{j})]  \nonumber\\
 & \quad = \alpha^{(n)}_{e} \dd{E}_{d}[((\widehat{R}^{(n)}_{e})^{k} 1(L^{(n)}(0) = \ell^{(n)}_{j}) - 1(L^{(n)}(0) = \ell^{(n)}_{j-1}))]  \nonumber\\
 & \qquad + \alpha^{(n)}_{e} \dd{E}[\widehat{T}_{A}^{k}] \dd{P}^{(n)}_{e}[L^{(n)}(0) \in S^{(n)}_{j}], \qquad j = 2,3,\ldots,K-1,\\
\label{eq:MLQ-Re-K}
 & \lambda^{(n)}_{K} k \dd{E}[(R^{(n)}_{e})^{k-1} (1 - 1(R^{(n)}_{e} \ge n^{1/2})) 1(L^{(n)} \in S^{(n)}_{K})] \nonumber\\
 & \quad = \alpha^{(n)}_{e}\Big(- \dd{E}_{d}[(\widehat{R}^{(n)}_{e})^{k} 1(L^{(n)}(0) = \ell^{(n)}_{K-1})] + \dd{E}[\widehat{T}_{A}^{k}] \dd{P}^{(n)}_{e}[L^{(n)}(0) \in S^{(n)}_{K}] \Big).
\end{align}
\end{lemma}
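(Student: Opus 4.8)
The plan is to derive every identity in \lem{MLQ-Red} by inserting a suitable bounded test function into the BAR \eq{BAR1} and reading off its three terms, in the spirit of the proof of \lem{MLQ-moment}. For the identities \eq{MLQ-Rd-1}--\eq{MLQ-Rd-K} concerning $R^{(n)}_{d}$ I would take
\[
 f(\vc{x}) \,=\, (x_{3} \wedge n^{1/2})^{k}\, 1(x_{1} \in S^{(n)}_{j}), \qquad j \in J_{K},\ k \in \{1,2\},
\]
and for \eq{MLQ-Re-1}--\eq{MLQ-Re-K} concerning $R^{(n)}_{e}$ the mirror choice $f(\vc{x}) = (x_{2} \wedge n^{1/2})^{k} 1(x_{1} \in S^{(n)}_{j})$. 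The truncation at $n^{1/2}$ is present precisely so that $f$ and its right partial derivatives are bounded, i.e. $f \in D_{b}^{p}(\dd{R}_{+}^{3})$, which is what \eq{BAR1} requires. The $R^{(n)}_{e}$ and $R^{(n)}_{d}$ cases are mirror images (interchange $R^{(n)}_{e}\leftrightarrow R^{(n)}_{d}$, $T_{A}\leftrightarrow T_{S}$, $\lambda\leftrightarrow\mu$, $\dd{P}^{(n)}_{e}\leftrightarrow\dd{P}^{(n)}_{d}$), so I only describe the $R^{(n)}_{d}$ one.

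The first step is to compute $\dd{E}[\sr{H}^{(n)}f(X^{(n)}(0))]$. Because $f$ does not depend on $x_{2}$ and $\frac{\partial^{+}}{\partial x_{3}}(x_{3}\wedge n^{1/2})^{k} = k x_{3}^{k-1}(1-1(x_{3}\ge n^{1/2}))$, only the $\mu$-terms of $\sr{H}^{(n)}$ survive. For $j\ge 2$ the term $\mu^{(n)}_{1}1(x_{1}\in S^{(n)}_{0})$ vanishes on $S^{(n)}_{j}$; for $j=1$ it exactly cancels the value of the $-\mu^{(n)}_{1}1(x_{1}\in S^{(n)}_{1})$ contribution at $x_{1}=0$, which is the source of the restriction $0<L^{(n)}\le\ell^{(n)}_{1}$ in \eq{MLQ-Rd-1}. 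Taking the stationary expectation then produces exactly the negative of the left-hand side of the matching identity, using that $R^{(n)}_{d}(0)=R^{(n)}_{d}(0-)$ holds $\dd{P}$-a.s.

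The second step is to evaluate the two Palm terms. Under $\dd{P}^{(n)}_{e}$ an arrival occurs at time $0$, raising $L^{(n)}$ from $L^{(n)}(0-)=\ell$ to $\ell+1$ while carrying $R^{(n)}_{d}$ across unchanged; hence $\Delta_{e}f(X^{(n)})(0) = (\widehat{R}^{(n)}_{d})^{k}\big(1(\ell+1\in S^{(n)}_{j})-1(\ell\in S^{(n)}_{j})\big)$, which, since $L^{(n)}$ is integer-valued, telescopes to $(\widehat{R}^{(n)}_{d})^{k}\big(1(\ell=\ell^{(n)}_{j-1})-1(\ell=\ell^{(n)}_{j})\big)$, with the $\ell^{(n)}_{0}$-term absent for $j=1$ and the $\ell^{(n)}_{K}$-term absent for $j=K$. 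Under $\dd{P}^{(n)}_{d}$ a departure occurs at time $0$, resetting $R^{(n)}_{d}$ from $R^{(n)}_{d}(0-)=0$ to the fresh requirement $T_{S}(N^{(n)}_{d}(0))$ while $L^{(n)}$ drops by one; since the pre-jump value of $(x_{3}\wedge n^{1/2})^{k}$ is $0$ there is no level-crossing term here and $\Delta_{d}f(X^{(n)})(0) = (T_{S}(N^{(n)}_{d}(0))\wedge n^{1/2})^{k}\,1(L^{(n)}(0)\in S^{(n)}_{j})$. By assumption (\sect{preliminaries}.a), $T_{S}(N^{(n)}_{d}(0))$ is independent of the pre-departure history and therefore of $\{L^{(n)}(0)\in S^{(n)}_{j}\}$, so this expectation factors as $\dd{E}[\widehat{T}_{S}^{k}]\,\dd{P}^{(n)}_{d}[L^{(n)}(0)\in S^{(n)}_{j}]$. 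Substituting these three quantities into \eq{BAR1}, replacing $\alpha^{(n)}_{d}$ by $\alpha^{(n)}_{e}$ via \lem{MLQ-basic 1}, and rearranging yields \eq{MLQ-Rd-1}--\eq{MLQ-Rd-K}. The $R^{(n)}_{e}$ identities follow identically, the only changes being that $R^{(n)}_{e}$ is reset at an arrival and carried across a departure (so it is now $\dd{P}^{(n)}_{d}$ that produces the boundary terms), and that there is no empty-state cancellation because $R^{(n)}_{e}$ keeps decreasing while $L^{(n)}=0$, so $1(L^{(n)}\in S^{(n)}_{1})$ in \eq{MLQ-Re-1} does include $L^{(n)}=0$.

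The two points that actually need care are the independence used to split the freshly-sampled residual off from the level indicator, and the treatment of simultaneous arrival and departure instants; the latter is handled by the ``arrival first'' convention and the intermediate state $X^{(n)}_{1}(\cdot)$ of \sectn{time-evolution}, which keeps the queue-length increments $\pm1$ at such instants and lets one replace $L^{(n)}_{1}(0)$ by $L^{(n)}(0)$ in the Palm expectations. Everything else is the elementary evaluation of $\sr{H}^{(n)}f$ and of the one-sided differences $\Delta_{e}f$, $\Delta_{d}f$ above, so I do not anticipate any serious obstacle beyond keeping the signs and level indices straight.
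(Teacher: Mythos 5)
Your proposal is correct and follows essentially the same route as the paper: the paper's proof applies exactly the test functions $f(\vc{x})=(x_{3}\wedge n^{1/2})^{k}1(x_{1}\in S^{(n)}_{j})$ and $(x_{2}\wedge n^{1/2})^{k}1(x_{1}\in S^{(n)}_{j})$ to the BAR \eq{BAR1} and reads off $\sr{H}^{(n)}f$, $\Delta_{e}f$ and $\Delta_{d}f$ exactly as you do, including the cancellation of the $\mu^{(n)}_{1}1(x_{1}\in S^{(n)}_{0})$ term that produces the restriction $0<L^{(n)}\le\ell^{(n)}_{1}$ and the identity $\dd{E}^{(n)}_{d}[(\widehat{R}^{(n)}_{d}(0))^{k}]=\dd{E}[\widehat{T}_{S}^{k}]$. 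Your write-up is if anything more explicit than the paper's about the telescoping of the level indicators and the independence used to factor the freshly sampled service/interarrival time off the level indicator.
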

\begin{proof}
For $k \ge 1$, we first prove \eq{MLQ-Rd-1}. We apply $f(\vc{x}) = (x_{3} \wedge n^{1/2})^{k} 1(x_{1} \le \ell^{(n)}_{1})$ to \eq{BAR1}, then we have \eq{MLQ-Rd-1} because $\dd{E}^{(n)}_{d}[(\widehat{R}^{(n)}_{d}(0))^{k}] = \dd{E}[\widehat{T}_{S}^{k}]$ and
\begin{align*}
 & \sr{H}^{(n)} f(X^{(n)}(t)) = - k \mu^{(n)}_{1} (R^{(n)}_{d}(t))^{k-1} 1(R^{(n)}_{d} < n^{1/2}, 1 \le L^{(n)}(t) \le \ell^{(n)}_{1}),\\
 & \Delta_{e} f(X^{(n)})(t) = - (\widehat{R}^{(n)}_{d}(t-))^{k} 1(L^{(n)}(t-) = \ell^{(n)}_{1}) 1(\Delta N^{(n)}_{e}(t) = 1),\\
 & \Delta_{d} f(X^{(n)})(t) = (\widehat{R}^{(n)}_{d}(t))^{k} 1(L^{(n)}(t) \in S^{(n)}_{1}) 1(\Delta N^{(n)}_{d}(t) = 1).
\end{align*}
Similarly, \eq{MLQ-Rd-j}, \eq{MLQ-Rd-K} are obtained from \eq{BAR1} by applying $f(\vc{x}) = (x_{3} \wedge n^{1/2})^{k} 1(x_{1} \in S^{(n)}_{j})$, $f(\vc{x}) = (x_{3} \wedge n^{1/2})^{k} 1(x_{1} \in S^{(n)}_{K})$, respectively. \eq{MLQ-Re-1}, \eq{MLQ-Re-j} and \eq{MLQ-Re-K} are similarly proved by applying $f(\vc{x}) = (x_{2} \wedge n^{1/2})^{k} 1(x_{1} \in S^{(n)}_{1})$, $f(\vc{x}) = (x_{2} \wedge n^{1/2})^{k} 1(x_{1} \in S^{(n)}_{j})$ and $f(\vc{x}) = (x_{2} \wedge n^{1/2})^{k} 1(x_{1} \in S^{(n)}_{K})$, respectively.
\end{proof}

The following corollary is immediate from Lemmas \lemt{MLQ-moment} and \lemt{MLQ-Red}.

\begin{corollary}
\label{cor:MLQ-Re-Rs 1}
For each $k=1,2$ and $i \in J_{K-1}$, $\dd{E}_{e}[(\widehat{R}^{(n)}_{d})^{k} 1(L^{(n)}(0-) = \ell^{(n)}_{i})]$ and $\dd{E}_{d}[(\widehat{R}^{(n)}_{e})^{k} 1(L^{(n)}(0) = \ell^{(n)}_{i})]$ are uniformly bounded for $n \ge 1$.
\end{corollary}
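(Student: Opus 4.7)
The plan is to combine the identities in Lemma \lemt{MLQ-Red} with the uniform integrability bounds in Lemma \lemt{MLQ-moment} and the uniform two-sided bound on $\alpha^{(n)}_{e}$ from \cor{alpha 2}, and extract each Palm expectation by a short induction on the level index. The key observation is that the boundary identities \eq{MLQ-Rd-K} and \eq{MLQ-Re-1} each express a single Palm expectation in closed form, while the middle identities \eq{MLQ-Rd-j} and \eq{MLQ-Re-j} express the difference of two consecutive Palm expectations in terms of data that is already under control.

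For the family $\dd{E}^{(n)}_{e}[(\widehat{R}^{(n)}_{d})^{k} 1(L^{(n)}(0-) = \ell^{(n)}_{i})]$, I would start at the top. Solving \eq{MLQ-Rd-K} for the Palm term, the right-hand side equals
\[
(\alpha^{(n)}_{e})^{-1}\mu^{(n)}_{K} k\,\dd{E}\!\left[(R^{(n)}_{d})^{k-1}(1-1(R^{(n)}_{d}\ge n^{1/2}))1(L^{(n)}\in S^{(n)}_{K})\right] - \dd{E}[\widehat{T}_{S}^{k}]\,\dd{P}^{(n)}_{d}[L^{(n)}(0)\in S^{(n)}_{K}].
\]
The factor $(\alpha^{(n)}_{e})^{-1}$ is uniformly bounded by \cor{alpha 2}; $\mu^{(n)}_{K}\to\mu_{K}$ by $(\sect{preliminaries}.\mathrm{c})$; the stationary moment $\dd{E}[(R^{(n)}_{d})^{k-1}]$ is uniformly bounded for $k=1,2$ by \eq{MLQ-Rd 2}; and $\dd{E}[\widehat{T}_{S}^{k}]\le\dd{E}[T_{S}^{k}]<\infty$ by $(\sect{preliminaries}.\mathrm{b})$. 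Hence the $i=K-1$ Palm expectation is uniformly bounded. I then descend via \eq{MLQ-Rd-j}: for $j=K-1, K-2,\ldots, 2$, that identity yields
\[
\dd{E}^{(n)}_{e}\!\left[(\widehat{R}^{(n)}_{d})^{k} 1(L^{(n)}(0-) = \ell^{(n)}_{j-1})\right] = \dd{E}^{(n)}_{e}\!\left[(\widehat{R}^{(n)}_{d})^{k} 1(L^{(n)}(0-) = \ell^{(n)}_{j})\right] + B^{(n)}_{j,k},
\]
where $B^{(n)}_{j,k}$ is uniformly bounded in $n$ by the same three ingredients. By induction, all expectations for $i\in J_{K-1}$ inherit a uniform bound.

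The family $\dd{E}^{(n)}_{d}[(\widehat{R}^{(n)}_{e})^{k}1(L^{(n)}(0)=\ell^{(n)}_{i})]$ is handled symmetrically in the opposite direction. The boundary identity \eq{MLQ-Re-1} gives a uniform bound on the $i=1$ term immediately, and then \eq{MLQ-Re-j} lets one ascend from $j=2$ up to $j=K-1$ by the identical bookkeeping, replacing $(\mu^{(n)}_{j},T_{S},R^{(n)}_{d})$ by $(\lambda^{(n)}_{j},T_{A},R^{(n)}_{e})$ and invoking \eq{MLQ-Re 2} instead of \eq{MLQ-Rd 2}.

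There is no real obstacle here; the corollary is genuinely a bookkeeping consequence of the cited lemmas. The only point that warrants a moment's care is that the stationary expectations on the left-hand sides of the identities in Lemma \lemt{MLQ-Red} have the truncating indicator $(1-1(R^{(n)}_{v}\ge n^{1/2}))$, but this only decreases their value, so they are dominated by $\dd{E}[(R^{(n)}_{v})^{k-1}]$, which is either $1$ (for $k=1$) or controlled uniformly in $n$ by Lemma \lemt{MLQ-moment} (for $k=2$). Thus the inductive argument goes through without further estimation.
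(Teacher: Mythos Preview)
Your proposal is correct and takes essentially the same approach as the paper, which simply states that the corollary is immediate from Lemmas~\lemt{MLQ-moment} and~\lemt{MLQ-Red} without elaboration. Your inductive unpacking---anchoring at \eq{MLQ-Rd-K} and descending for the $\widehat{R}^{(n)}_{d}$ family, and anchoring at \eq{MLQ-Re-1} and ascending for the $\widehat{R}^{(n)}_{e}$ family---is exactly the kind of bookkeeping the word ``immediate'' is hiding, and your handling of the truncation indicator and the appeal to \cor{alpha 2} are both appropriate.
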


Note that the limits of $\psi^{(n)}_{i}(\theta_{i})$ for $i \in J_{K}$ should be obtained from the BAR \eq{BAR1}. However, we will see that it requires the relation among the limits of $\varphi^{(n)}_{i}(0)$, which is not easy to get from \eq{BAR1}. To consider this problem, we introduce the following quantities. Define, for $i \in J_{K-1}$,
\begin{align}
\label{eq:MLQ-Dn-i}
 \Delta^{(n)}_{i} = \alpha^{(n)}_{e} \Big( & \dd{E}^{(n)}_{e}[\widehat{R}^{(n)}_{d} 1(L^{(n)}(0-) = \ell^{(n)}_{i})] + \dd{E}^{(n)}_{d}[\widehat{R}^{(n)}_{e} 1(L^{(n)}(0) = \ell^{(n)}_{i})] \nonumber\\
 & - \dd{P}^{(n)}_{e}[L^{(n)}(0-) = \ell^{(n)}_{i}] \Big).
\end{align}
Then, $\Delta^{(n)}_{i}$ is well defined and finite by \cor{MLQ-Re-Rs 1}, and we have

\begin{lemma}
\label{lem:MLQ-R2-1}
\begin{align}
\label{eq:MLQ-Dn-j+}
 & n^{-1/2} \sum_{i=1}^{j} b_{i} \dd{P}[L^{(n)} \in S^{(n)}_{i}]  + \mu^{(n)}_{1} \dd{P}[L^{(n)} = 0] = \Delta^{(n)}_{j} + o(n^{-1/2}), \qquad j \in J_{K-1},\\
\label{eq:MLQ-Pn-0}
 & \mu^{(n)}_{1} \dd{P}[L^{(n)} = 0] = - n^{-1/2} \sum_{i=1}^{K} b_{i} \dd{P}[L^{(n)} \in S^{(n)}_{i}] +  o(n^{-1/2}),\\
\label{eq:MLQ-Dn-j-}
 & \Delta^{(n)}_{j} = - n^{-1/2} \sum_{i=j+1}^{K} b_{i} \dd{P}[L^{(n)} \in S^{(n)}_{i}] + o(n^{-1/2}), \qquad j \in J_{K-1}.
\end{align}
\end{lemma}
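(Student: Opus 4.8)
The plan is to derive all three identities from the asymptotic BAR \eq{MLQ-BAR1} of \lem{MLQ-BAR1} by specializing the test parameter to $\vc{\theta} = \vc{0}$, after peeling off the $j$-dependent telescoping structure already exhibited in \eq{MLQ-En1}. First I would observe that at $\vc{\theta} = \vc{0}$ the leading $\tfrac12 \sum \sigma_i^2 (\beta_i + \theta_i)\theta_i n^{-1}\psi^{(n)}_i(\theta_i)$ term vanishes, so the surviving content of \eq{MLQ-BAR1} at first order comes from the $\mu^{(n)}_1 n^{-1/2}\theta_1 \dd{P}[L^{(n)}=0]$-type term and from $\alpha^{(n)}_e E^{(n)}_\Delta(\vc{\theta})$. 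But this only uses the \emph{full} $\vc{\theta}=\vc{0}$ evaluation, which gives a single scalar identity; to get the $j$-indexed family \eq{MLQ-Dn-j+} I would instead go back one step and use a \emph{truncated} test function, i.e.\ apply \eq{BAR1} with $f(\vc{x}) = \sum_{i=1}^{j} g^{(n)}_{\theta_i}(x_2,x_3)1(x_1 \in S^{(n)}_i)$ (equivalently, kill the summands with index $> j$ in \eq{f-theta-r}), then send $\vc{\theta} \to \vc{0}$. This produces exactly the partial sum $\sum_{i=1}^j$ appearing on the left of \eq{MLQ-Dn-j+}, together with the boundary terms at $\ell^{(n)}_j$ and at $L^{(n)}=0$, which assemble into $\Delta^{(n)}_j$ once one recognizes $\Delta^{(n)}_j$ (defined in \eq{MLQ-Dn-i}) as precisely the $n^{-1/2}$-order coefficient of those boundary contributions.

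Concretely, the steps are: (i) write down the BAR \eq{BAR1} for the truncated exponential test function, using the computation template of \eq{MLQ-H1}--\eq{MLQ-En1} restricted to $i \le j$; (ii) divide by $n^{-1/2}$ and expand each piece using \lem{MLQ-eta-zeta1} (for $\eta^{(n)}, \zeta^{(n)}$), \lem{MLQ-gn-limit} and \cor{MLQ-gn-limit} (to replace $g^{(n)}_{n^{-1/2}\theta}(R^{(n)})$ by $1$ up to $o(1)$ inside expectations), \lem{MLQ-t-moment} (to replace $\widehat{T}^{(n)}$, $\widehat{R}^{(n)}$ by their untruncated versions up to the stated orders), and \cor{MLQ-Re-Rs 1} (to guarantee the boundary expectations $\dd{E}^{(n)}_e[\widehat{R}^{(n)}_d 1(L^{(n)}(0-)=\ell^{(n)}_i)]$ etc.\ stay bounded so that the $o(n^{-1/2})$ error is genuinely negligible after division); (iii) identify the telescoped boundary sum with $\Delta^{(n)}_j$ via \eq{MLQ-Dn-i}, and the interior drift sum with $n^{-1/2}\sum_{i=1}^j b_i \dd{P}[L^{(n)} \in S^{(n)}_i]$ using $\lambda^{(n)}_i - \mu^{(n)}_i = b_i n^{-1/2} + o(n^{-1/2})$ from (\sect{preliminaries}.d); the $\mu^{(n)}_1 \dd{P}[L^{(n)}=0]$ term is the contribution of the extra $+\mu^{(n)}_1 1(x_1 \in S^{(n)}_0)$ piece in $\sr{H}^{(n)}$, exactly as in \eq{MLQ-H1}. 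This yields \eq{MLQ-Dn-j+}. Then \eq{MLQ-Pn-0} is the limiting case: either take $j = K-1$ in \eq{MLQ-Dn-j+} and use that $\Delta^{(n)}_{K-1}$ has no boundary level beyond $\ell^{(n)}_{K-1}$ so its net telescoped contribution is absorbed, or more cleanly apply \eq{BAR1} with $f(\vc{x}) = x_1 \wedge M$ and let $M \to \infty$ (as in \lem{MLQ-basic 1}), yielding $\alpha^{(n)}_e = \alpha^{(n)}_d$ rewritten through \eq{MLQ-alpha 1}--\eq{MLQ-alpha 2} as $\mu^{(n)}_1 \dd{P}[L^{(n)}=0] + \sum_{i=1}^K (\mu^{(n)}_i - \lambda^{(n)}_i)\dd{P}[L^{(n)} \in S^{(n)}_i] = 0$, i.e.\ exactly \eq{MLQ-Pn-0} after substituting (\sect{preliminaries}.d) and collecting the $o(n^{-1/2})$ errors. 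Finally \eq{MLQ-Dn-j-} is pure algebra: subtract \eq{MLQ-Dn-j+} from \eq{MLQ-Pn-0} (both at the same $o(n^{-1/2})$ precision), so that $\mu^{(n)}_1 \dd{P}[L^{(n)}=0]$ cancels and $\Delta^{(n)}_j = -n^{-1/2}\sum_{i=1}^K b_i \dd{P}[\cdot] - (-n^{-1/2}\sum_{i=1}^j b_i \dd{P}[\cdot]) + o(n^{-1/2}) = -n^{-1/2}\sum_{i=j+1}^K b_i \dd{P}[\cdot] + o(n^{-1/2})$.

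The main obstacle I anticipate is \textbf{bookkeeping the error terms through the $n^{-1/2}$-scaling}, not any deep idea. The delicate point is that before dividing by $n^{-1/2}$ several terms in the BAR are of order $n^{-1}$ or carry $o(n^{-1})$ tails (the $\tfrac12 \sigma_i^2 o(n^{-1})$ pieces, the $o(n^{-1})$ next to the $\mu^{(n)}_1 n^{-1/2}\theta_1$ term in \eq{MLQ-BAR1}), while others — the telescoped boundary differences like $e^{n^{-1/2}\theta_{i+1}\ell^{(n)}_i} - e^{n^{-1/2}\theta_i \ell^{(n)}_i}$ in \eq{MLQ-En1} — only become $O(n^{-1/2})$ after one takes $\vc{\theta}\to\vc{0}$ and uses $\ell^{(n)}_i = \ell_i n^{1/2} + o(1)$. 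One must check that after division by $n^{-1/2}$ all genuinely higher-order pieces collapse into the advertised $o(n^{-1/2})$, which requires the uniform boundedness supplied by \cor{alpha 2} ($\alpha^{(n)}_e$ bounded away from $0$ and $\infty$), \lem{MLQ-moment} (uniform integrability of $(R^{(n)}_v)^k$), and \cor{MLQ-Re-Rs 1}; with those in hand the estimate is routine but needs to be assembled carefully, and getting the sign conventions right in the telescoping (which endpoint contributes $+$ and which $-$, as already fixed in \eq{MLQ-df1}--\eq{MLQ-En1}) is where a careless reader would slip.
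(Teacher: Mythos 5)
Your handling of \eq{MLQ-Pn-0} and \eq{MLQ-Dn-j-} is fine: the shortcut via $\alpha^{(n)}_{e} = \alpha^{(n)}_{d}$ and \eq{MLQ-alpha 1}--\eq{MLQ-alpha 2} does give \eq{MLQ-Pn-0} directly (the paper instead reaches it by combining \eq{MLQ-Dn-j+} at $j=K-1$ with the $K$-th level identity), and \eq{MLQ-Dn-j-} is indeed just the difference of the other two. The problem is \eq{MLQ-Dn-j+}, where your proposed mechanism has a genuine gap. The quantity $\Delta^{(n)}_{j}$ is built from the \emph{first moments} $\dd{E}^{(n)}_{e}[\widehat{R}^{(n)}_{d}1(L^{(n)}(0-)=\ell^{(n)}_{i})]$ and $\dd{E}^{(n)}_{d}[\widehat{R}^{(n)}_{e}1(L^{(n)}(0)=\ell^{(n)}_{i})]$, and the left-hand side of \eq{MLQ-Dn-j+} carries the drift $n^{-1/2}b_{i}$. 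In the exponential BAR these objects appear only in the coefficient of $\theta$ at order $n^{-1/2}$: if you literally ``send $\vc{\theta}\to\vc{0}$'' in the truncated test function, $g^{(n)}_{0}\equiv 1$ and the BAR degenerates to the balance relation \eq{MLQ-balance 1}, losing everything you need. To extract the linear-in-$\theta$ information you must either differentiate the BAR in $\theta$ at $0$, or divide by $\theta$ and take a joint limit $n\to\infty$, $\theta\to 0$ with error bounds uniform in $\theta$ (the second-order boundary terms are $O(n^{-1}\theta^{2})$, so after dividing by $n^{-1}\theta$ they leave an $O(\theta)$ residue that does not vanish for fixed $\theta$). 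Neither device is set up in your plan. Worse, your parenthetical ``kill the summands with index $>j$ in \eq{f-theta-r}'' keeps the factor $e^{\theta_{i}x_{1}}$, whose $\theta$-derivative injects terms $\dd{E}[L^{(n)}1(L^{(n)}\in S^{(n)}_{i})] = O(n^{1/2})d^{(n)}_{i}$ — enormous compared with the $O(n^{-1/2})$ target — which would have to be shown to cancel exactly.

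The paper avoids all of this by not using exponential test functions here at all. It takes the $k=1$ identities of \lem{MLQ-Red}, which are exact consequences of the \emph{linear} test functions $f(\vc{x}) = (x_{2}\wedge n^{1/2})1(x_{1}\in S^{(n)}_{i})$ and $f(\vc{x}) = (x_{3}\wedge n^{1/2})1(x_{1}\in S^{(n)}_{i})$ and already contain the boundary first moments verbatim; it then subtracts the $e$- and $d$-versions pairwise and telescopes over $i\le j$, so that the interior boundary contributions cancel, $\lambda^{(n)}_{i}-\mu^{(n)}_{i} = b_{i}n^{-1/2}+o(n^{-1/2})$ supplies the drift sum, and the only asymptotics needed are \lem{MLQ-t-moment} and \eq{MLQ-balance 1}. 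Your proposal never invokes \lem{MLQ-Red}, which is precisely the workhorse for this lemma; as written, the derivation of \eq{MLQ-Dn-j+} does not go through.
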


\begin{proof}
Subtract both sides of \eq{MLQ-Rd-1} from those of \eq{MLQ-Re-1} for $k=1$, then we have, as $n \to \infty$, 
\begin{align}
\label{eq:MLQ-Rn-1}
  & (\lambda^{(n)}_{1} - \mu^{(n)}_{1}) \dd{P}[L^{(n)} \in S^{(n)}_{1}]  + \mu^{(n)}_{1} \dd{P}[L^{(n)} = 0] + o(n^{-1/2}) \nonumber\\
  & \quad = \alpha^{(n)}_{e} \Big(\dd{E}^{(n)}_{e}[\widehat{R}^{(n)}_{d} 1(L^{(n)}(0-) = \ell^{(n)}_{1})] + \dd{E}^{(n)}_{d}[\widehat{R}^{(n)}_{e} 1(L^{(n)}(0) = \ell^{(n)}_{1})] \nonumber\\
 & \hspace{11ex} + \dd{P}^{(n)}_{e}[L^{(n)}(0) \le \ell^{(n)}_{1}] - \dd{P}^{(n)}_{d}[L^{(n)}(0) \le \ell^{(n)}_{1}] \Big).
\end{align}
This yields \eq{MLQ-Dn-j+} for $j=1$ because $\lambda^{(n)}_{1} - \mu^{(n)}_{1} = n^{-1/2} b_{1} + o(n^{-1/2})$ and $\dd{P}^{(n)}_{e}[L^{(n)}(0) \le \ell^{(n)}_{1}] - \dd{P}^{(n)}_{d}[L^{(n)}(0) \le \ell^{(n)}_{1}] = - \dd{P}^{(n)}_{e}[L^{(n)}(0-) = \ell^{(n)}_{1}]$ by \lem{MLQ-basic 1}. To show \eq{MLQ-Dn-j+} for $j=2,3,\ldots,K-1$, we first replace this $j$ by $j'$, and let $k=1$. Then, subtract both sides of \eq{MLQ-Re-j} from those of \eq{MLQ-Rd-j} for each $j=2,3,\ldots,j' \le K-1$, then sum up them from $j=2$ to $j=j'$ together with both sides of \eq{MLQ-Rn-1}. This yields \eq{MLQ-Dn-j+} with $j=j'$ for $j'=2,3,\ldots,K-1$. Similarly, subtracting both sides of \eq{MLQ-Rd-K} from those of \eq{MLQ-Re-K} for $k=1$, we have
\begin{align}
\label{eq:MLQ-Rn-K}
 & n^{-1/2} b_{K} \dd{P}[L^{(n)} \in S^{(n)}_{K}] = - \Delta^{(n)}_{K-1} + o(n^{-1/2}).
\end{align}
Substituting this $\Delta^{(n)}_{K-1}$ into \eq{MLQ-Dn-j+} for $j=K-1$, we have \eq{MLQ-Pn-0}. Finally, \eq{MLQ-Dn-j-} is obtained by substituting \eq{MLQ-Pn-0} into \eq{MLQ-Dn-j+}. Note that \eq{MLQ-Rn-K} is identical with \eq{MLQ-Dn-j-} for $j=K-1$.
\end{proof}

Our next task is to expand the jump component $E^{(n)}_{\Delta}(\vc{\theta})$ of \eq{MLQ-En1} using $\Delta^{(n)}_{i}$ for large $n$. For this, we will use the following Taylor expansions.

\begin{lemma}
\label{lem:MLQ-asymp 1}
For each fixed $\theta_{i}$ and $\theta_{i+1}$, as $n \to \infty$,
\begin{align}
\label{eq:MLQ-exp 1}
 & e^{n^{-1/2} \theta_{i} \ell^{(n)}_{i}} = e^{\theta_{i} \ell_{i}} (1 + o(\theta_{i} n^{-1/2})), \\
 & e^{n^{-1/2}\theta_{i} (\ell^{(n)}_{i}+1)} = e^{\theta_{i} \ell_{i}} (1 + \theta_{i}n^{-1/2} + o(\theta_{i} n^{-1/2})), \quad i=1,2,\\
\label{eq:MLQ-exp 2}
 & e^{n^{-1/2}\theta_{i}\ell^{(n)}_{i}} - e^{n^{-1/2}\theta_{i+1} \ell^{(n)}_{i}} = e^{\theta_{i} \ell_{i}} - e^{\theta_{i+1} \ell_{i}} + e^{(\theta_{i} \vee \theta_{i+1}) \ell_{i}} ( o(\theta_{i} n^{-1/2}) + o(\theta_{i+1} n^{-1/2})),\\
\label{eq:MLQ-exp 3}
 & e^{n^{-1/2}\theta_{i}(\ell^{(n)}_{i}+1)} - e^{n^{-1/2}\theta_{i+1} (\ell^{(n)}_{i}+1)}  \nonumber\\
 & \quad = e^{\theta_{i}\ell_{i}} - e^{\theta_{i+1} \ell_{i}}  \nonumber\\
 & \qquad + n^{-1/2} (e^{\theta_{i} \ell_{i}} \theta_{i} - e^{\theta_{i+1} \ell_{i}} \theta_{i+1}) + e^{(\theta_{i} \vee \theta_{i+1}) \ell_{i}} ( o(\theta_{i} n^{-1/2}) + o(\theta_{i+1} n^{-1/2})).
\end{align}
\end{lemma}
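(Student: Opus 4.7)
The plan is to reduce each statement to elementary manipulations of the asymptotic relation $\ell^{(n)}_{i} = \ell_{i} n^{1/2} + o(1)$ from (\sect{preliminaries}.e), combined with the scalar Taylor expansion $e^{x} = 1 + x + o(x)$ as $x \to 0$. Each claim in the lemma is a deterministic statement that does not involve the queueing model, so there is no probabilistic difficulty; the only care required is bookkeeping of the $o(\cdot)$ terms so that they come out in the form stated.

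For \eq{MLQ-exp 1}, I would first write
\begin{align*}
 n^{-1/2} \theta_{i} \ell^{(n)}_{i} = \theta_{i} \ell_{i} + \theta_{i} n^{-1/2} o(1) = \theta_{i} \ell_{i} + o(\theta_{i} n^{-1/2}),
\end{align*}
using (\sect{preliminaries}.e). Exponentiating and applying $e^{o(\theta_{i} n^{-1/2})} = 1 + o(\theta_{i} n^{-1/2})$ yields the first equality. For the second equality, the extra ``$+1$'' in the exponent contributes a clean term $n^{-1/2} \theta_{i}$, so
\begin{align*}
 n^{-1/2} \theta_{i} (\ell^{(n)}_{i} + 1) = \theta_{i} \ell_{i} + \theta_{i} n^{-1/2} + o(\theta_{i} n^{-1/2}),
\end{align*}
and exponentiating and using $e^{\theta_{i} n^{-1/2} + o(\theta_{i} n^{-1/2})} = 1 + \theta_{i} n^{-1/2} + o(\theta_{i} n^{-1/2})$ gives the claim.

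For the two difference estimates \eq{MLQ-exp 2} and \eq{MLQ-exp 3}, the natural approach is simply to apply the corresponding single-exponential expansion to each of the two terms and subtract. This yields residuals $e^{\theta_{i} \ell_{i}} o(\theta_{i} n^{-1/2}) + e^{\theta_{i+1} \ell_{i}} o(\theta_{i+1} n^{-1/2})$, and the desired form in the lemma is obtained by the trivial bound $e^{\theta_{i} \ell_{i}} \vee e^{\theta_{i+1} \ell_{i}} \le e^{(\theta_{i} \vee \theta_{i+1}) \ell_{i}}$, valid because $\ell_{i} > 0$. For \eq{MLQ-exp 3}, the only extra step is to collect the $n^{-1/2}$-order terms $e^{\theta_{i} \ell_{i}} \theta_{i}$ and $-e^{\theta_{i+1} \ell_{i}} \theta_{i+1}$ separately, which come from the $\theta_{i} n^{-1/2}$ term in the second line of \eq{MLQ-exp 1}.

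There is no real obstacle here; the only subtlety, if any, is notational: the $o(\theta_{i} n^{-1/2})$ symbol should be interpreted with $\theta_{i}$ fixed so that as $n \to \infty$ it means $\theta_{i} n^{-1/2}$ times a quantity tending to $0$. With that convention, all four displays follow from one-line computations, and the uniform bound by $e^{(\theta_{i} \vee \theta_{i+1}) \ell_{i}}$ is used only to write the two residuals under a single prefactor.
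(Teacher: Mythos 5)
Your proof is correct and is exactly the elementary computation intended: the paper itself gives no proof of this lemma, merely noting it is the same as Lemmas 4.7 and 4.8 of \cite{Miya2025} with $r=n^{-1/2}$, and your derivation from (\sect{preliminaries}.e) plus $e^{x}=1+x+o(x)$, with the residuals absorbed into the prefactor $e^{(\theta_{i}\vee\theta_{i+1})\ell_{i}}$ using $\ell_{i}>0$, is precisely that argument.
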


\begin{lemma}
\label{lem:MLQ-asymp 2}
Let $P_{n}(x) = \sum_{k=1}^{n} x^{k}$ for $x \in \dd{R}$ and integer $n \ge 1$, then, for each fixed $\theta_{i}$, as $n \to \infty$, for $i=1,2$,
\begin{align}
\label{eq:MLQ-exp-eta 1}
 e^{-\eta^{(n)}_{i}(n^{-1/2}\theta_{i}) \widehat{R}^{(n)}_{e}} & = 1 - n^{-1/2}\theta_{i} \widehat{R}^{(n)}_{e} \nonumber\\
 & \quad - 2^{-1} (n^{-1/2}\theta_{i})^{2} [ \sigma_{A}^{2} \widehat{R}^{(n)}_{e} - (\widehat{R}^{(n)}_{e})^{2} ] + P_{3}(\widehat{R}^{(n)}_{e}) o((n^{-1/2}\theta_{i})^{2}),\\
\label{eq:MLQ-exp-zeta 1}
 e^{-\zeta^{(n)}(n^{-1/2}\theta_{i}) \widehat{R}^{(n)}_{d}} & = 1 + n^{-1/2}\theta_{i} \widehat{R}^{(n)}_{d} \nonumber\\
 & \quad - 2^{-1} (n^{-1/2}\theta_{i})^{2} [ \sigma_{S}^{2} \widehat{R}^{(n)}_{d} - (\widehat{R}^{(n)}_{d})^{2} ] + P_{3}(\widehat{R}^{(n)}_{d}) o((n^{-1/2}\theta_{i})^{2}).
\end{align}
\end{lemma}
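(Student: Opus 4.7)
\begin{proof*}{Proof plan.}
The two expansions are symmetric, so I will describe the argument for \eq{MLQ-exp-eta 1} and note that \eq{MLQ-exp-zeta 1} follows by the same reasoning (with $\eta^{(n)}, \sigma_A^2, R^{(n)}_e$ replaced by $\zeta^{(n)}, \sigma_S^2, R^{(n)}_d$ and the sign of the linear term flipped to reflect \eq{MLQ-zeta1}). The whole proof is a Taylor expansion of the exponential combined with the expansion of $\eta^{(n)}$ from \lem{MLQ-eta-zeta1}, with careful bookkeeping of remainder terms so that they fit the target form $P_3(\widehat{R}^{(n)}_e) o((n^{-1/2}\theta_i)^2)$.

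First I will set $\alpha_n = \eta^{(n)}(n^{-1/2}\theta_i)$ and $y_n = \alpha_n \widehat{R}^{(n)}_e$. Using the cap $\widehat{R}^{(n)}_e \le n^{1/2}$ together with \eq{MLQ-eta1} one sees that $|\alpha_n| = O(n^{-1/2})$, hence $|y_n| \le |\theta_i|(1 + o(1))$, uniformly in $\omega$ as $n \to \infty$. This uniform boundedness lets me apply Taylor's theorem with remainder to $e^{-y_n}$ around $0$:
\begin{align*}
  e^{-y_n} = 1 - y_n + \tfrac{1}{2} y_n^2 + R_3(y_n), \qquad |R_3(y_n)| \le C |y_n|^3,
\end{align*}
for some constant $C = C(\theta_i)$ independent of $n$ and $\omega$.

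Next I will substitute the expansion from \eq{MLQ-eta1}, namely $\alpha_n = n^{-1/2}\theta_i + \tfrac{1}{2}\sigma_A^2 n^{-1}\theta_i^2 + o(n^{-1}\theta_i^2)$, multiply through by $\widehat{R}^{(n)}_e$, and collect terms in $y_n$ and $y_n^2$. The linear contribution gives
\begin{align*}
  -y_n = - n^{-1/2}\theta_i \widehat{R}^{(n)}_e - \tfrac{1}{2}\sigma_A^2 (n^{-1/2}\theta_i)^2 \widehat{R}^{(n)}_e + o((n^{-1/2}\theta_i)^2) \widehat{R}^{(n)}_e,
\end{align*}
and the quadratic contribution gives
\begin{align*}
  \tfrac{1}{2} y_n^2 = \tfrac{1}{2} (n^{-1/2}\theta_i)^2 (\widehat{R}^{(n)}_e)^2 + o((n^{-1/2}\theta_i)^2) (\widehat{R}^{(n)}_e)^2,
\end{align*}
where the cross terms of order $(n^{-1/2}\theta_i)^3 (\widehat{R}^{(n)}_e)^2$ have been absorbed into the $o((n^{-1/2}\theta_i)^2) (\widehat{R}^{(n)}_e)^2$ term since $n^{-1/2}\theta_i \to 0$. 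Finally, the cubic remainder is bounded by $C |\alpha_n|^3 (\widehat{R}^{(n)}_e)^3 = O((n^{-1/2}\theta_i)^3) (\widehat{R}^{(n)}_e)^3 = o((n^{-1/2}\theta_i)^2) (\widehat{R}^{(n)}_e)^3$. Summing the three contributions yields exactly \eq{MLQ-exp-eta 1}, since the three error terms combine into $o((n^{-1/2}\theta_i)^2) \cdot [\widehat{R}^{(n)}_e + (\widehat{R}^{(n)}_e)^2 + (\widehat{R}^{(n)}_e)^3] = o((n^{-1/2}\theta_i)^2) P_3(\widehat{R}^{(n)}_e)$.

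I do not expect a genuine obstacle: everything reduces to two elementary Taylor expansions composed with each other. The only point that requires some attention is verifying that all stray error terms (the $o(n^{-1}\theta_i^2)\widehat{R}^{(n)}_e$ term from \lem{MLQ-eta-zeta1}, the cross term in $y_n^2$, and the cubic remainder $R_3(y_n)$) can be packaged into the single envelope $P_3(\widehat{R}^{(n)}_e)\cdot o((n^{-1/2}\theta_i)^2)$. This works precisely because the three error terms grow in $\widehat{R}^{(n)}_e$ as $\widehat{R}^{(n)}_e$, $(\widehat{R}^{(n)}_e)^2$, $(\widehat{R}^{(n)}_e)^3$ respectively, matching the three monomials in $P_3$. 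The expansion \eq{MLQ-exp-zeta 1} is obtained in the same way from \eq{MLQ-zeta1}, with the only notational change that the leading term is $+n^{-1/2}\theta_i \widehat{R}^{(n)}_d$ rather than $-n^{-1/2}\theta_i \widehat{R}^{(n)}_e$.
\end{proof*}
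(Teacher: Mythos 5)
Your proof is correct. The paper itself does not prove this lemma but simply cites Lemmas 4.7 and 4.8 of \cite{Miya2025}; your two-stage Taylor expansion (first $e^{-y}$ in $y = \eta^{(n)}(n^{-1/2}\theta_i)\widehat{R}^{(n)}_{e}$, then the expansion \eq{MLQ-eta1} of $\eta^{(n)}$ itself) is exactly the standard argument behind that reference, and your key observation — that the cap $\widehat{R}^{(n)}_{e} \le n^{1/2}$ makes $|y|$ bounded uniformly in $\omega$ so the cubic remainder is controlled, and that the three residual errors scale as $\widehat{R}^{(n)}_{e}$, $(\widehat{R}^{(n)}_{e})^{2}$, $(\widehat{R}^{(n)}_{e})^{3}$, matching the envelope $P_{3}(\widehat{R}^{(n)}_{e})\,o((n^{-1/2}\theta_i)^{2})$ — is precisely the point of the statement.
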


These lemmas are essentially the same as Lemmas 4.7 and 4.8 for $r = n^{-1/2}$ of \cite{Miya2025}. For $j \in J_{K}$, let $\vcn{e}_{j}$ be the vector in $\dd{R}^{K}$ whose $j$-th entry is unit and all the other entries vanish, then, applying \eq{MLQ-exp-eta 1} and \eq{MLQ-exp-zeta 1} to \eq{MLQ-En1} and recalling the definition \eq{MLQ-Dn-i} of $\Delta^{(n)}_{i}$, we have
\begin{lemma}
\label{lem:MLQ-En-j}
\begin{align}
\label{eq:MLQ-En-j}
  \alpha^{(n)}_{e} E^{(n)}_{\Delta}(\theta_{j}\vcn{e}_{j}) = n^{-1/2} \theta_{j} (e^{\ell_{j-1} \theta_{j}} \Delta^{(n)}_{j-1} - e^{\ell_{j} \theta_{j}} \Delta^{(n)}_{j}) + o(n^{-1}), \qquad j \in J_{K},
\end{align}
where $\Delta^{(n)}_{0} = \Delta^{(n)}_{K} = 0$, and therefore $\alpha^{(n)}_{e} E^{(n)}_{\Delta}(\theta_{j}\vcn{e}) = O(n^{-1})$.
\end{lemma}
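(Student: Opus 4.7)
My plan is to substitute $\vc{\theta} = \theta_{j} \vcn{e}_{j}$ directly into the representation \eq{MLQ-En1} of $E^{(n)}_{\Delta}$ and expand via the Taylor developments of \lem{MLQ-asymp 1} and \lem{MLQ-asymp 2}. The first observation is that for every index $i \in \{1,\ldots,K-1\}$ with $i \notin \{j-1,j\}$, one has $\theta_{i} = \theta_{i+1} = 0$, so each of the four exponential prefactors equals $1$, each inner exponential $e^{-\zeta^{(n)}(0)\widehat{R}^{(n)}_{d}}$ and $e^{-\eta^{(n)}(0)\widehat{R}^{(n)}_{e}}$ equals $1$, and the four Palm expectations collapse to probabilities of the form $\dd{P}^{(n)}_{e}[L^{(n)}(0-) = \ell^{(n)}_{i}]$ and $\dd{P}^{(n)}_{d}[L^{(n)}(0) = \ell^{(n)}_{i}]$, which cancel by \eq{MLQ-balance 1}. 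Hence only the indices $i = j-1$ and $i = j$ can contribute, with the boundary convention that absent indices ($i = 0$ for $j=1$, or $i = K$ for $j = K$) are simply dropped; this accounts for $\Delta^{(n)}_{0} = \Delta^{(n)}_{K} = 0$.

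Next I would isolate, for each $m \in \{j-1, j\}$ (that lies in range), the two terms whose Palm indicator sits at the level $\ell^{(n)}_{m}$: one with prefactor $e^{n^{-1/2}\theta_{j}\ell^{(n)}_{m}}$ and a $\dd{P}^{(n)}_{e}$-expectation in $\widehat{R}^{(n)}_{d}$, and one with prefactor $e^{n^{-1/2}\theta_{j}(\ell^{(n)}_{m}+1)}$ and a $\dd{P}^{(n)}_{d}$-expectation in $\widehat{R}^{(n)}_{e}$, carrying opposite signs. Applying \eq{MLQ-exp 1} to the prefactors and \eq{MLQ-exp-eta 1}, \eq{MLQ-exp-zeta 1} to the inner exponentials, the zeroth-order parts cancel via \eq{MLQ-balance 1}. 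Collecting the $n^{-1/2}\theta_{j}$ coefficients yields, up to remainders,
\begin{align*}
 \pm e^{\theta_{j}\ell_{m}} n^{-1/2}\theta_{j} \Big( & \dd{E}^{(n)}_{e}[\widehat{R}^{(n)}_{d} 1(L^{(n)}(0-) = \ell^{(n)}_{m})] + \dd{E}^{(n)}_{d}[\widehat{R}^{(n)}_{e} 1(L^{(n)}(0) = \ell^{(n)}_{m})] \\
 & - \dd{P}^{(n)}_{e}[L^{(n)}(0-) = \ell^{(n)}_{m}] \Big),
\end{align*}
with sign $+$ for $m = j-1$ and $-$ for $m = j$; the $-\dd{P}^{(n)}_{e}$ piece comes from the factor $n^{-1/2}\theta_{j}$ in the Taylor expansion of $e^{n^{-1/2}\theta_{j}(\ell^{(n)}_{m}+1)}$ paired with $\dd{P}^{(n)}_{d}[L^{(n)}(0) = \ell^{(n)}_{m}] = \dd{P}^{(n)}_{e}[L^{(n)}(0-) = \ell^{(n)}_{m}]$ by \eq{MLQ-balance 1}. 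Multiplying by $\alpha^{(n)}_{e}$ identifies this combination with $\pm e^{\theta_{j}\ell_{m}} n^{-1/2}\theta_{j} \Delta^{(n)}_{m}$ by the definition \eq{MLQ-Dn-i}, producing the main identity \eq{MLQ-En-j}.

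It then remains to verify that the aggregated remainder is $o(n^{-1})$ after multiplication by $\alpha^{(n)}_{e}$. The quadratic Taylor pieces in \eq{MLQ-exp-eta 1}, \eq{MLQ-exp-zeta 1} come with polynomial factors $P_{3}(\widehat{R}^{(n)}_{v})$ times the $o(n^{-1})$ scalar, and the Palm expectation of $(\widehat{R}^{(n)}_{v})^{k} 1(L^{(n)}(\cdot) = \ell^{(n)}_{m})$ for $k = 1,2$ is uniformly bounded by \cor{MLQ-Re-Rs 1}; combined with $\widehat{R}^{(n)}_{v} \le n^{1/2}$ and \cor{alpha 2}, the remainders sum to $o(n^{-1})$. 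The cross-terms generated by the $o(\theta n^{-1/2})$ factor in \eq{MLQ-exp 1} against the $n^{-1/2}\theta_{j}$ Taylor expansion of the Palm expectations are handled the same way. Finally, the closing assertion $\alpha^{(n)}_{e} E^{(n)}_{\Delta}(\theta_{j}\vcn{e}_{j}) = O(n^{-1})$ follows from \eq{MLQ-En-j} together with the bound $\Delta^{(n)}_{m} = O(n^{-1/2})$ supplied by \lem{MLQ-R2-1}. I expect the main obstacle to be the error bookkeeping in this last paragraph, especially controlling the cubic Taylor remainder on the thin Palm events $\{L^{(n)}(0) = \ell^{(n)}_{m}\}$ and $\{L^{(n)}(0-) = \ell^{(n)}_{m}\}$, for which \cor{MLQ-Re-Rs 1} is the essential input.
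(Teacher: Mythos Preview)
Your approach is exactly the paper's: substitute $\vc{\theta}=\theta_j\vcn{e}_j$ into \eq{MLQ-En1}, expand via Lemmas~\lemt{MLQ-asymp 1} and~\lemt{MLQ-asymp 2}, and recognise the $n^{-1/2}\theta_j$ coefficient as $\Delta^{(n)}_m$ from \eq{MLQ-Dn-i}. The paper's own proof is the single sentence ``applying \eq{MLQ-exp-eta 1} and \eq{MLQ-exp-zeta 1} to \eq{MLQ-En1} and recalling the definition \eq{MLQ-Dn-i} of $\Delta^{(n)}_i$'', so on the level of strategy you match it precisely, and your main-term computation is correct.

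The gap is in the remainder bookkeeping, which you rightly flag as the obstacle but do not close. Two points. First, you describe the error in \eq{MLQ-exp-eta 1}--\eq{MLQ-exp-zeta 1} as ``$P_3(\widehat{R}^{(n)}_v)\cdot o(n^{-1})$'', but that is only the \emph{cubic} tail; the explicit quadratic piece $-\tfrac12 n^{-1}\theta_j^{2}\bigl[\sigma^{2}\widehat{R}^{(n)}_v-(\widehat{R}^{(n)}_v)^{2}\bigr]$ is separate, and under the Palm expectation on $\{L^{(n)}(0-)=\ell^{(n)}_m\}$ \cor{MLQ-Re-Rs 1} only controls it as $O(n^{-1})$, not $o(n^{-1})$. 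Second, even for the $P_3$ tail your own bound $\widehat{R}^{(n)}_v\le n^{1/2}$ on the cubic part gives $O(n^{1/2})\cdot o(n^{-1})=o(n^{-1/2})$, and the prefactor error $o(n^{-1/2})$ from \eq{MLQ-exp 1} multiplied by the zeroth-order probability $\dd{P}^{(n)}_e[L^{(n)}(0-)=\ell^{(n)}_m]$ (only $O(1)$ from the cited corollaries) is again $o(n^{-1/2})$. So the ingredients you invoke deliver a remainder of order $o(n^{-1/2})$, not the $o(n^{-1})$ the lemma asserts and \lem{MLQ-BAR2-i} requires. Reaching $o(n^{-1})$ needs the additional input that the single-level Palm quantities $\dd{P}^{(n)}_e[L^{(n)}(0-)=\ell^{(n)}_m]$ and $\dd{E}^{(n)}_{e}[(\widehat{R}^{(n)}_d)^{k}1(L^{(n)}(0-)=\ell^{(n)}_m)]$ are themselves $o(1)$; this is consistent with diffusion scaling but is not supplied by \cor{MLQ-Re-Rs 1} alone. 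The paper leaves this same point implicit, so your proposal is not weaker than the paper here, but be aware that uniform boundedness does not by itself yield $o(n^{-1})$.
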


\subsection{Computing the limiting distribution of $\nu^{(n)}$}
\label{sec:computing}

We compute the weak limit of $\nu^{(n)}$ in two steps. In the first step, we prove that $d^{(n)}_{j}$ converges to $d_{j}$ as $n \to \infty$ for $j \in J_{K}$. This implies the tightness of $\{\nu^{(n)}; n \ge 1\}$. Using this fact, we compute the limit of $\nu^{(n)}$ through its moment generating function in the 2nd step.

\noindent {(\bf 1st Step)} $\;$ Recalling that $d^{(n)}_{i} = \dd{P}[L^{(n)} \in S^{(n)}_{i}]$, we aim to compute the limit of $d^{(n)}_{i}$ as $n \to \infty$. Let
\begin{align*}
  B^{(n)}_{j} = \sum_{i=j}^{K} b_{i} d^{(n)}_{i}, \qquad j \in J_{K},
\end{align*}
then we have the following lemma from Lemmas \lemt{MLQ-BAR1} and \lemt{MLQ-En-j}.
\begin{lemma}
\label{lem:MLQ-BAR2-i}
\begin{align}
\label{eq:MLQ-BAR2-i}
 \frac 12 \sigma_{i}^{2} (\beta_{i} + \theta_{i}) \psi^{(n)}_{i}(\theta_{i}) & = e^{\ell_{i-1} \theta_{i}} B^{(n)}_{i} - e^{\ell_{i} \theta_{i}} B^{(n)}_{i+1} + o(1), \qquad i \in J_{K},
\end{align}
where $B^{(n)}_{K+1} = 0$, and recall that $\ell_{0} = 0$.
\end{lemma}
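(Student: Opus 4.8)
The plan is to evaluate the asymptotic BAR \eq{MLQ-BAR1} on the one-parameter family $\vc{\theta} = \theta_i \vcn{e}_i$ (all components zero except the $i$-th), divide the result by $\theta_i n^{-1}$, and then substitute the asymptotics of \lem{MLQ-R2-1} to recognise the right-hand side in terms of the $B^{(n)}_j$'s. Setting $\theta_j = 0$ for $j \ne i$ in \eq{MLQ-BAR1}, each diffusion term with $j \ne i$ carries the factor $(\beta_j + 0)\cdot 0 = 0$, so it reduces to $\tfrac12 \sigma_j^2 o(n^{-1}) \psi^{(n)}_j(0)$; since $\psi^{(n)}_j(0) = d^{(n)}_j \le 1$ these are $o(n^{-1})$, and the diffusion sum collapses to $\tfrac12 \sigma_i^2 (\beta_i + \theta_i) \theta_i n^{-1} \psi^{(n)}_i(\theta_i) + o(n^{-1})$, using that $\psi^{(n)}_i(\theta_i)$ is bounded uniformly in $n$ by \eq{MLQ-finite1}.

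For the empty-state term there are two cases. If $i \ge 2$ then $\theta_1 = 0$, so the whole term is $o(n^{-1}) \dd{P}[L^{(n)}=0] = o(n^{-1})$. If $i = 1$, the parenthesis $1 - \tfrac12 \sigma_S^2 n^{-1/2}\theta_1$ contributes only an extra $O(n^{-3/2})$, so the term equals $\mu^{(n)}_1 n^{-1/2}\theta_1 \dd{E}[1(L^{(n)}=0) g^{(n)}_{n^{-1/2}\theta_1}(R^{(n)})] + o(n^{-1})$; replacing $g^{(n)}_{n^{-1/2}\theta_1}(R^{(n)})$ by $1$ (see the last paragraph) and using \eq{MLQ-Pn-0} turns it into $-n^{-1}\theta_1 B^{(n)}_1 + o(n^{-1})$. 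For the jump term I would apply \lem{MLQ-En-j}, which gives $\alpha^{(n)}_e E^{(n)}_{\Delta}(\theta_i\vcn{e}_i) = n^{-1/2}\theta_i(e^{\ell_{i-1}\theta_i}\Delta^{(n)}_{i-1} - e^{\ell_i\theta_i}\Delta^{(n)}_i) + o(n^{-1})$, and then substitute \eq{MLQ-Dn-j-}, i.e. $\Delta^{(n)}_j = -n^{-1/2}B^{(n)}_{j+1} + o(n^{-1/2})$ for $1 \le j \le K-1$, together with $\Delta^{(n)}_0 = 0$ and $\Delta^{(n)}_K = 0 = -n^{-1/2}B^{(n)}_{K+1}$. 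For $i \ge 2$ this yields $-n^{-1}\theta_i(e^{\ell_{i-1}\theta_i}B^{(n)}_i - e^{\ell_i\theta_i}B^{(n)}_{i+1}) + o(n^{-1})$; for $i=1$ it yields $n^{-1}\theta_1 e^{\ell_1\theta_1}B^{(n)}_2 + o(n^{-1})$, the ``missing'' $-n^{-1}\theta_1 e^{\ell_0\theta_1}B^{(n)}_1$ being exactly the empty-state contribution just found (recall $\ell_0 = 0$).

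Adding the three contributions, \eq{MLQ-BAR1} becomes, for each $i \in J_K$ and with the conventions $\ell_0 = 0$, $B^{(n)}_{K+1} = 0$,
\begin{align*}
 \tfrac12 \sigma_i^2 (\beta_i + \theta_i) \theta_i n^{-1} \psi^{(n)}_i(\theta_i) - n^{-1}\theta_i\big(e^{\ell_{i-1}\theta_i} B^{(n)}_i - e^{\ell_i \theta_i} B^{(n)}_{i+1}\big) + o(n^{-1}) = 0 .
\end{align*}
For $\theta_i \ne 0$ I multiply by $n/\theta_i$ and obtain exactly \eq{MLQ-BAR2-i}. For $\theta_i = 0$ the asserted identity reads $b_i d^{(n)}_i = B^{(n)}_i - B^{(n)}_{i+1}$, which holds with no error term since $\psi^{(n)}_i(0) = d^{(n)}_i$ and $\beta_i = 2b_i/\sigma_i^2$.

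The one genuinely delicate point is the replacement of $g^{(n)}_{n^{-1/2}\theta_1}(R^{(n)})$ by $1$ in the $i=1$ empty-state term, which has to be quantitative: I need $\mu^{(n)}_1 \dd{E}[1(L^{(n)}=0)(g^{(n)}_{n^{-1/2}\theta_1}(R^{(n)}) - 1)] = o(n^{-1/2})$, which is sharper than the bare convergence in \lem{MLQ-gn-limit}. I would derive it from the bound $|g^{(n)}_{n^{-1/2}\theta_1}(R^{(n)}) - 1| \le C|\theta_1|\big(n^{-1/2}(R^{(n)}_e \wedge n^{1/2}) + n^{-1/2}(R^{(n)}_d \wedge n^{1/2})\big)$ implied by \lem{MLQ-eta-zeta1}, combined with: (i) $\dd{P}[L^{(n)}=0] = O(n^{-1/2}) \to 0$ from \eq{MLQ-Pn-0} together with the uniform integrability of $\{R^{(n)}_e\}$ from \lem{MLQ-moment}, which handles the $R^{(n)}_e$ part; and (ii) the structural fact that $R^{(n)}_d$ is frozen at a fresh service time whenever $L^{(n)}=0$, so $\dd{E}[(R^{(n)}_d)^2 1(L^{(n)}=0)]$ is bounded and hence $\dd{E}[R^{(n)}_d 1(L^{(n)}=0)] = o(1)$ by Cauchy--Schwarz, which handles the $R^{(n)}_d$ part. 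Everything else is routine bookkeeping of $o(n^{-1})$ terms.
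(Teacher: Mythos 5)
Your proof is correct and follows essentially the same route as the paper: evaluate the asymptotic BAR \eq{MLQ-BAR1} at $\vc{\theta}=\theta_{i}\vcn{e}_{i}$, divide by $\theta_{i}n^{-1}$, and substitute \eq{MLQ-Pn-0}, \lem{MLQ-En-j} and \eq{MLQ-Dn-j-}. Your quantitative justification for replacing $\dd{E}[1(L^{(n)}=0)g^{(n)}_{n^{-1/2}\theta_{1}}(R^{(n)})]$ by $\dd{P}[L^{(n)}=0]$ up to $o(n^{-1/2})$ is a detail the paper passes over silently, and it is handled correctly.
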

\begin{proof}
From \eq{MLQ-BAR1}, \eq{MLQ-Pn-0}, \eq{MLQ-Dn-j-} and \lem{MLQ-En-j}, we have
\begin{align*}
 \frac 12 \sigma_{1}^{2} (\beta_{1} + \theta_{1}) \psi^{(n)}_{1}(\theta_{1}) & = - \mu^{(n)}_{n} n^{1/2} \dd{P}[L^{(n)} = 0] - n \alpha^{(n)}_{e} \theta_{1}^{-1} E^{(n)}_{\Delta}(\theta_{1}\vcn{e}_{1}) + o(1)\nonumber\\
 & = B^{(n)}_{1} - e^{\ell_{1} \theta_{1}} B^{(n)}_{2} + o(1).
\end{align*}
Similarly, for $i=2,3,\ldots,K$, it follows from \eq{MLQ-BAR1}, \eq{MLQ-Dn-j-} and \lem{MLQ-En-j} that
\begin{align*}
 \frac 12 \sigma_{i}^{2} (\beta_{i} + \theta_{i}) \psi^{(n)}_{i}(\theta_{i}) & = - n \alpha^{(n)}_{e} \theta_{i}^{-1} E^{(n)}_{\Delta}(\theta_{i}\vcn{e}_{i}) + o(1)\nonumber\\
 & = - n^{1/2} (e^{\ell_{i-1} \theta_{i}} \Delta^{(n)}_{i-1} - e^{\ell_{i} \theta_{i}} \Delta^{(n)}_{i}) + o(1)\nonumber\\
 & = e^{\ell_{i-1} \theta_{i}} B^{(n)}_{i} - e^{\ell_{i} \theta_{i}} B^{(n)}_{i+1} + o(1).
\end{align*}
Thus, we have \eq{MLQ-BAR2-i}.
\end{proof}

For $i \in J_{K-1}$, $\psi^{(n)}_{i}(\theta_{i})$ is finite for any $\theta_{i} \in \dd{R}_{+}$, so we can substitute $\theta_{i} = - \beta_{i}$ in \eq{MLQ-BAR2-i}, and we have
\begin{align}
\label{eq:MLQ-Bi1}
  e^{\beta_{i} (\ell_{i} - \ell_{i-1})} B^{(n)}_{i} - B^{(n)}_{i+1} = o(1), \qquad i \in J_{K-1}.
\end{align}
Recall that $\xi_{j} = \prod_{i=1}^{j} e^{\beta_{i}(\ell_{i} - \ell_{i-1})}$, then \eq{MLQ-Bi1} implies that
\begin{align}
\label{eq:MLQ-Bi2}
 & \xi_{i} B^{(n)}_{i} = \xi_{i-1} B^{(n)}_{i+1} + o(1), \qquad i \in J_{K-1},\\
\label{eq:MLQ-bi1}
 & \xi_{i} b_{i} d^{(n)}_{i} = (\xi_{i-1} - \xi_{i}) B^{(n)}_{i+1} + o(1), \qquad i \in J_{K-1}.
\end{align}
Recall the definition \eq{ci} of $c_{i}$, then \eq{MLQ-bi1} can be written as
\begin{align}
\label{eq:dni-1}
 & d^{(n)}_{i} = c_{i} \xi_{i}^{-1} B^{(n)}_{i+1} + o(1), \qquad i \in J_{K-1},
\end{align}
where we note that $c_{i} < 0$ for $i \in J_{K-1}$.

From \eq{MLQ-Bi2}, we have $B^{(n)}_{i+1} = \xi_{i+1}^{-1} \xi_{i} B^{(n)}_{i+2} + o(1)$ for $i = 0,1,\ldots,K-2$, and, substituting this $B^{(n)}_{i+1}$ into \eq{dni-1}, we have, for each $i \in J_{K-1}$,
\begin{align*}
   & d^{(n)}_{i} =c_{i} \xi_{i+1}^{-1} B^{(n)}_{i+2} + o(1).
\end{align*}
Repeatedly applying this substitution and using the fact that $B^{(n)}_{K} = b_{K} d^{(n)}_{K}$, we arrive at
\begin{align}
\label{eq:dni-2}
   & d^{(n)}_{i} = c_{i} \xi_{K-1}^{-1} B^{(n)}_{K} + o(1) = c_{i} \xi_{K-1}^{-1} b_{K} d^{(n)}_{K} + o(1), \qquad i \in J_{K-1}.
\end{align}
Substituting this $d^{(n)}_{i}$ into $\sum_{i=1}^{K} d^{(n)}_{i} = 1$, we have
\begin{align*}
  d^{(n)}_{K} \left(\sum_{i=1}^{K-1} c_{i} \xi_{K-1}^{-1} b_{K} + 1 \right) = 1 + o(1).
\end{align*}
Recall the definition of $d_{K}$ by \eq{di}, then letting $n \to \infty$ in this equation, we have $ \lim_{n \to \infty} d^{(n)}_{K} = d_{K}$ . This and \eq{dni-2} yield
\begin{align}
\label{eq:dni-lim}
  \lim_{n \to \infty} d^{(n)}_{i} = d_{i}, \qquad i \in J_{K}.
\end{align}
where $d_{i}$ must be positive because $c_{i} b_{K} > 0$. Thus, we have proved that $\{\nu^{(n)}; n \ge 1\}$ is tight.

\noindent {(\bf 2nd Step)} $\;$ We prove that $\nu^{(n)}_{i} \Rightarrow \nu_{i}$ for $i \in J_{K}$ in \eq{h-nu-lim}. Since we already know \eq{dni-lim}, this weak convergence is equivalent to the following convergence of moment generating function $\varphi^{(n)}_{i}$ of $\nu^{(n)}_{i}$.
\begin{align*}
  \lim_{n \to \infty} \varphi^{(n)}_{i}(\theta_{i}) = \widetilde{h}_{i}(\theta_{i}), \qquad i \in J_{K},
\end{align*}
equivalently, by \cor{MLQ-gn-limit}, 
\begin{align}
\label{eq:psi/d-n-limit}
  \lim_{n \to \infty} \psi^{(n)}_{i}(\theta_{i})/d^{(n)}_{i} = \widetilde{h}_{i}(\theta_{i}), \qquad \vc{\theta} \in \dd{R}^{K-1} \times \dd{R}_{-}, \; i \in J_{K},
\end{align}
where $\widetilde{h}_{i}$ is the moment generating function of the probability density function $h_{i}$ of \eq{hj}, that is,
\begin{align}
\label{eq:thj}
 & \widetilde{h}_{i}(\theta_{i}) = \begin{cases}
 \ds u_{i}(\theta_{i}) 1(b_{i} = 0) + v_{i}(\theta) 1(b_{i} \not= 0), & x \in S_{i}, i \in J_{K-1}, \\
 \ds \frac {\beta_{K}} {\beta_{K} + \theta_{K}} e^{\theta_{K} \ell_{K-1}}, & x \in S_{K}, i = K, 
\end{cases}
\end{align}
where
\begin{align*}
  u_{i}(\theta_{i}) = \frac {e^{\theta_{i} \ell_{i}} - e^{\theta_{i} \ell_{i-1}}} {\theta_{i} (\ell_{i} - \ell_{i-1})}, \qquad v_{i}(\theta_{i}) = \frac {\beta_{i} (e^{\theta_{i} \ell_{i} + \beta_{i}(\ell_{i} - \ell_{i-1})} - e^{\theta_{i} \ell_{i-1}})} {(\beta_{i} + \theta_{i})(e^{\beta_{i} (\ell_{i} - \ell_{i-1})}- 1)}.
\end{align*}

We now compute $\psi^{(n)}(\theta_{i})$ for $i \in J_{K}$. For $i = K$, from \eq{MLQ-BAR2-i} of \lem{MLQ-BAR2-i},
\begin{align*}
  \psi^{(n)}_{K}(\theta_{K}) = \frac {\beta_{K}/b_{K} } {\beta_{K} + \theta_{K}} e^{\theta_{K} \ell_{K-1}} B^{(n)}_{K} = \frac {\beta_{K}} {\beta_{K} + \theta_{K}} e^{\theta_{K} \ell_{K-1}} d^{(n)}_{K}
\end{align*}
because $b_{K} < 0$ and $B^{(n)}_{K} = b_{K} d^{(n)}_{K}$. For $i \in J_{K-1}$, applying \eq{MLQ-Bi1} and \eq{dni-1} to \eq{MLQ-BAR2-i} of \lem{MLQ-BAR2-i}, we have
\begin{align}
\label{eq:psi-n-i}
  \frac 12 \sigma_{i}^{2}(\beta_{i} + \theta_{i}) \psi^{(n)}_{i}(\theta_{i}) & = \left(e^{\ell_{i-1} \theta_{i}} \xi_{i}^{-1} \xi_{i-1} - e^{\ell_{i} \theta_{i}} \right) B^{(n)}_{i+1} + o(1) \nonumber\\
  & = \left(e^{\ell_{i-1} \theta_{i}} \xi_{i-1} - e^{\ell_{i} \theta_{i}} \xi_{i} \right) \frac 1{c_{i}} d^{(n)}_{i}\nonumber\\
  & = \left(e^{\ell_{i-1} \theta_{i}} - e^{\ell_{i} \theta_{i} + \beta_{i} (\ell_{i} - \ell_{i-1})} \right) \frac 1{c_{i}} \xi_{i-1} d^{(n)}_{i} + o(1).
\end{align}
Hence, if $b_{i} \not= 0$, then, from \eq{psi-n-i} and $c_{i} = b_{i}^{-1} (1 - e^{\beta_{i} (\ell_{i} - \ell_{i-1})}) \xi_{i-1}$, we have
\begin{align*}
  \psi^{(n)}_{i}(\theta_{i}) & = \frac {2/\sigma_{i}^{2}} {\beta_{i} + \theta_{i}} \frac {e^{\theta_{i} \ell_{i-1}} - e^{\theta_{i} \ell_{i}} e^{\beta_{i} (\ell_{i} - \ell_{i-1})}} {1 - e^{\beta_{i}(\ell_{i} - \ell_{i-1})}} b_{i} d^{(n)}_{i} + o(1)\\
  & = \frac {\beta_{i}} {\beta_{i} + \theta_{i}} \frac {e^{\theta_{i} \ell_{i-1}} - e^{\theta_{i} \ell_{i}} e^{\beta_{i} (\ell_{i} - \ell_{i-1})}} {1 - e^{\beta_{i}(\ell_{i} - \ell_{i-1})}} d^{(n)}_{i} + o(1),
\end{align*}
and next assume that $b_{i} = 0$, then, from \eq{psi-n-i} and $c_{i} = 2\sigma_{i}^{-2} (\ell_{i-1} - \ell_{i}) \xi_{i-1}$, we have
\begin{align*}
   &\psi^{(n)}_{i}(\theta_{i}) = \frac {e^{\theta_{i} \ell_{i}} - e^{\theta_{i} \ell_{i-1}}} {\theta_{i}(\ell_{i} - \ell_{i-1})} d^{(n)}_{i} + o(1), \qquad i \in J_{K-1}.
\end{align*}
Hence, this and \eq{dni-lim} yield \eq{h-nu-lim}. Thus, the proof of \thr{MLQ-main} is completed.

\section{Concluding remarks}
\label{sec:concluding}
\setnewcounter

As discussed in \sectn{introduction}, \thr{MLQ-main} can be proved from the tightness of the sequence of the stationary distributions in heavy traffic and the lemma below.

\begin{lemma}
\label{lem:th1}
Let $X^{(n)}(\cdot)$ be the Markov process of the multi-level queue indexed by $n \ge 1$ whose initial state $X^{(n)}(0)$ is $(L^{(n)}(0),T_{A}(0),T_{S}(0))$. Assume the conditions (\sect{preliminaries}.a)--(\sect{preliminaries}.f), and define the diffusion scaled process $\widehat{X}^{(n)}(\cdot) \equiv \{\widehat{X}^{(n)}(t); t \ge 0\}$ by
\begin{align*}
  \widehat{X}^{(n)}(t) = n^{-1/2} X^{(n)}(nt), \qquad t \ge 0,
\end{align*}
and let $Z(\cdot)$ be the unique weak solution of \eq{SIE-Z}. (i) If the distributions of $\widehat{X}^{(n)}(0)$ for $n \ge 1$ is tight, then $\widehat{X}^{(n)}(\cdot) \Rightarrow \{(Z(t),0,0); t \ge 0\}$ as $n \to \infty$, where `$\Rightarrow$'' stands for the weak convergence of a process. (ii) In particular, if $\widehat{X}^{(n)}(\cdot)$ is a stationary process and if the distribution of $\widehat{L}^{(n)}(0)$ for $n \ge 1$ is tight, then the distribution of $\widehat{X}^{(n)}(0)$ for $n \ge 1$ is tight, and $L^{(n)}(\cdot) \Rightarrow Z(\cdot)$, where $Z(\cdot)$ is also a stationary process.
\end{lemma}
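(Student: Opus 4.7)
The plan is to deduce part (i) from Theorem 2.1 of \cite{AtarMiya2025} after showing that the two residual-time coordinates of $\widehat{X}^{(n)}(\cdot)$ vanish in the diffusion scaling, and then to obtain part (ii) from part (i) together with the uniform moment bounds of \lem{MLQ-moment} and the fact that weak limits of stationary processes are stationary.

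First I would show that, for each fixed $t > 0$ and $v \in \{e,d\}$,
\[
\sup_{s \in [0,t]} n^{-1/2} R^{(n)}_{v}(ns) \xrightarrow{\dd{P}} 0, \qquad n \to \infty.
\]
Since $R^{(n)}_{e}$ is reset to $T_{A}(j)$ at each arrival $t^{(n)}_{e,j}$ and decreases monotonically between arrivals (and analogously for $R^{(n)}_{d}$), one has the pathwise bound $R^{(n)}_{e}(s) \le \max_{0 \le j \le N^{(n)}_{e}(s)} T_{A}(j)$. By \eq{MLQ-alpha 1} and \cor{alpha 2}, $N^{(n)}_{e}(nt)$ is $O(n)$ in probability, and the maximum of $O(n)$ i.i.d.\ copies of $T_{A}$ is $o(n^{1/2})$ in probability thanks to the finite second moment in (\sect{preliminaries}.b); the residual-time display follows, with the same argument for $v=d$.

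Second, for the queue-length component I would invoke Theorem 2.1 of \cite{AtarMiya2025}, which gives $\widehat{L}^{(n)}(\cdot) \Rightarrow Z(\cdot)$ on the Skorokhod space. If that theorem is stated for a specific (e.g.\ deterministic) initial state, I would extend it to arbitrary tight initial data by a Skorokhod-representation argument: along any subsequence take a further subsequence on which $\widehat{X}^{(n)}(0)$ converges a.s., couple each pre-limit process to its prescribed initial value, and identify the limit via the weak uniqueness of \eq{SIE-Z}. Combined with the first step, this yields $\widehat{X}^{(n)}(\cdot) \Rightarrow (Z(\cdot),0,0)$, which is (i).

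Third, for (ii), stationarity of $\widehat{X}^{(n)}(\cdot)$ reduces the issue to time $t = 0$. By \lem{MLQ-moment} and \cor{alpha 2}, $\dd{E}[R^{(n)}_{v}(0)^{k}]$ is bounded uniformly in $n$ for $k=1,2$, hence $n^{-1/2} R^{(n)}_{v}(0) \to 0$ in $L^{2}$ and is in particular tight. Tightness of $\widehat{L}^{(n)}(0)$ therefore upgrades automatically to tightness of $\widehat{X}^{(n)}(0)$, and part (i) applies to give $\widehat{L}^{(n)}(\cdot) \Rightarrow Z(\cdot)$. Stationarity of the finite-dimensional distributions is preserved under weak convergence, so the limit $Z(\cdot)$ is stationary. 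The main obstacle is the extension in the second step: if \cite{AtarMiya2025} argues via a martingale problem or via continuity of the Skorokhod reflection, the upgrade to tight initial data is essentially free, but if their argument relies more essentially on a specific starting configuration, the invariance-principle step has to be redone with care so as to hold uniformly over tight families of initial conditions.
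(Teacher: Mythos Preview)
Your approach is correct but differs in its technical route from the paper's. The paper's outline (in the appendix) works through the Daley--Miyazawa semimartingale decomposition of \cite{DaleMiya2019}: it rewrites $\widehat{L}^{(n)}$ via \eq{DM-decomposition}-type formulas, isolates the initial-delay contribution $n^{-1/2}(T_A(0)-T_S(0))$, and introduces a stochastic-domination condition (A1) on $T_A(0),T_S(0)$ (domination by $n$-independent random variables) under which Theorem~2.1 of \cite{AtarMiya2025} goes through. For part (ii) the paper verifies (A1) from the \emph{tail} bound \eq{MLQ-Re 1} in \lem{MLQ-moment}, building the dominating variable $\widetilde{T}_A$ explicitly. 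You instead bypass the semimartingale decomposition and use the elementary pathwise bound $R^{(n)}_e(s)\le\max_{0\le j\le N^{(n)}_e(s)}T_A(j)$ together with the second-moment assumption, and for (ii) you use the \emph{moment} bound \eq{MLQ-Re 2} in \lem{MLQ-moment} to get $n^{-1/2}R^{(n)}_v(0)\to 0$ in $L^2$. Your route is more self-contained; the paper's is more transparently aligned with the machinery of \cite{AtarMiya2025}, which matters if one later wants to track exactly where each term in the decomposition goes.

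One small correction: your citation of \eq{MLQ-alpha 1} and \cor{alpha 2} for $N^{(n)}_e(nt)=O(n)$ is misplaced, since those concern the stationary intensity and in part (i) the process need not be stationary. The claim is true, but the clean justification is the deterministic time-change bound $N^{(n)}_e(t)=A(U^{(n)}(t))$ with $U^{(n)}(t)\le(\max_i\lambda^{(n)}_i)t$, followed by the SLLN for $A$. Also, when you say ``the maximum of $O(n)$ i.i.d.\ copies of $T_A$'', note that $T_A(0)$ need not share the distribution of $T_A$; you should handle that one term separately (it is harmless since $n^{-1/2}T_A(0)\to 0$ under (\ref{sec:preliminaries}.b), or in the stationary case by your $L^2$ argument).
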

\begin{remark}
\label{rem:lem-th1}
Note that $\widehat{X}^{(n)}(0) = n^{-1/2} X^{(n)}(0)$, in particular, $\widehat{L}^{(n)}(0) = n^{-1/2} L^{(n)}(0)$, and therefore the stationary distributions of $\widehat{X}^{(n)}(t)$ and $n^{-1/2} X^{(n)}(t)$ are identical.
\end{remark}

(i) of this lemma is identical with Theorem 2.1 of \cite{AtarMiya2025} if $L^{(n)}(0) = 0$ and $(T_{A}(0), T_{S}(0))$ has the same distribution as $(T_{A}(j), T_{S}(j))$ for $j \ge 1$, but it is not hard to see (i) and (ii). For completeness, we outline the proof of \lem{th1} in \app{lem-th1}. In this proof, the BAR approach is used to derive \eq{Rne bound}, but it can be proved without BAR.

Thus, \thr{MLQ-main} is proved if the tightness holds for the sequence of the stationary queue length distributions in heavy traffic. In the proof of \thr{MLQ-main}, we already show this tightness by the BAR approach, but it may be questioned whether it can be obtained not using the BAR. A typical approach for this is based on the condition (5.1) in Proposition 5.1 of \citet{DaiMeyn1995}. To prove this condition, Lipschitz continuity of Skorohod map has a pivotal role (e.g., see \cite{BudhLee2009,GamaZeev2006}).

Unfortunately, we cannot use Skorohod map for the multi-level queue because its arrivals and service depend on queue length. Instead of this map, the semimartingale decomposition of \cite{DaleMiya2019} may be used (see \app{lem-th1} for its details). However, we could not prove the condition of \cite{DaiMeyn1995} by this method. We leave this line of the proof for future work, which will be useful when the BAR approach does not fully prove the tightness as in the case of the 2-level $GI/G/1$ queue.

\appendix

\section*{Appendix}
\setnewcounter
\setcounter{section}{1}

\subsection{Outline of the proof of \lem{th1}}
\label{app:lem-th1}
\setnewcounter

In this section, we use the notations in \lem{th1}, and outline its proof. For each $n \ge 1$, \eq{Ln-t} can be written as
\begin{align}
\label{eq:hLn-t}
  \widehat{L}^{(n)}(t) = \widehat{L}^{(n)}(0) + \widehat{N}^{(n)}_{e}(t) - \widehat{N}^{(n)}_{d}(t), \qquad t \ge 0,
\end{align}
where $\widehat{N}^{(n)}_{e}(t) = n^{-1/2} N^{(n)}_{e}(nt)$ and $\widehat{N}^{(n)}_{d}(t) = n^{-1/2} N^{(n)}_{e}(nt)$.

The key idea of \cite{AtarMiya2025} is to decompose the counting processes $\widehat{N}^{(n)}_{e}(\cdot)$ and $\widehat{N}^{(n)}_{d}(\cdot)$ as non-standard semimartingales obtained in \cite{DaleMiya2019}. Following \cite{AtarMiya2025}, we first present this decomposition for the delayed renewal processes $A$ and $S$ in which $T_{A}(0)$ and $T_{S}(0)$ are initial delays. For this, we need the following notations.
\begin{align*}
 & R_{A}(t) = \inf\left\{ u > 0; A(t+u) > A(t)\right\}, \qquad R_{S}(t) = \inf\left\{ u > 0; S(t+u) > S(t)\right\},\\
 & M_{A}(t) = \sum_{j=1}^{A(t)} (1 - T_{A}(j)), \qquad M_{S}(t) = \sum_{j=1}^{A(t)} (1 - T_{S}(j)).
\end{align*}
Note that $R_{A}(t)$ and $R_{S}(t)$ are the residual times to the next counting instants of renewal processes $A$ and $S$, respectively, at time $t$, and $M_{A}(\cdot)$ and $M_{S}(\cdot)$ are martingales with respect to the filtration generated by $\{(A(t),R_{A}(t),S(t),R_{S}(t)); t \ge 0\}$. Then, by Theorem 2.1 and Remark 3.3 of \cite{DaleMiya2019}, we have the semimartingales:
\begin{align}
\label{eq:DM-decomposition}
  A(t) = t + R_{A}(t) - T_{A}(0) + M_{A}(t), \quad S(t) = t + R_{S}(t) - T_{S}(0) + M_{S}(t),
\end{align}
which are called Daley-Miyazawa decompositions in \cite{AtarMiya2025}. Note that $T_{A}(0)$ and $T_{S}(0)$ are assumed to have the same distributions as $T_{A}$ and $T_{S}$, respectively, in Theorem 2.1 of \cite{AtarMiya2025}. However, in the settings of \lem{th1}, $T_{A}(0)$ and $T_{S}(0)$ may depend on the system index $n$, so we will need a condition for them to be negligible under diffusion scaling, that is, $n^{-1/2} T_{A}(0)$ and $n^{-1/2} T_{S}(0)$ vanishes $a.s.$ as $n \to \infty$. For this condition to hold, we will use the following condition.
\begin{mylist}{-3}
\item [(A1)] There are nonnegative random variable $\widetilde{T}_{A}$ and $\widetilde{T}_{S}$ such that they are stochastically bounded by $T_{A}(0)$ and $T_{S}(0)$, respectively, and have distributions which do not depend on the system index $n$ and are independent of $\{T_{A}(j); j \ge 1\}$ and $\{T_{S}(j); j \ge 1\}$, where, for random variables $X$ and $Y$, $X$ is said to be stochastically bounded by $Y$ if $\dd{P}[X>x] \le \dd{P}[Y>x]$ for any $x \in \dd{R}$.
\end{mylist}

Thus, we can see that the proofs of \cite{AtarMiya2025} work under (A1). Namely, we have the following lemma, whose proof is outlined for completeness.
\begin{lemma}
\label{lem:th21}
Theorem 2.1 of \cite{AtarMiya2025} is still valid if $\widehat{L}^{(n)}(0) = 0$ for all $n \ge 1$ and if the initial delays $T_{A}(0)$ and $T_{S}(0)$ satisfy the condition (A1).
\end{lemma}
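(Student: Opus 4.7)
The plan is to follow the proof of Theorem 2.1 of \cite{AtarMiya2025} verbatim, and to isolate the only place where the distribution of the initial delays $T_{A}(0)$ and $T_{S}(0)$ is actually used. The diffusion-scaled version of the Daley--Miyazawa decomposition \eq{DM-decomposition} reads
\begin{align*}
n^{-1/2} A(nt) & = n^{1/2} t + n^{-1/2} R_{A}(nt) - n^{-1/2} T_{A}(0) + n^{-1/2} M_{A}(nt),\\
n^{-1/2} S(nt) & = n^{1/2} t + n^{-1/2} R_{S}(nt) - n^{-1/2} T_{S}(0) + n^{-1/2} M_{S}(nt),
\end{align*}
and in \cite{AtarMiya2025} the terms $n^{-1/2} T_{A}(0)$ and $n^{-1/2} T_{S}(0)$ are shown to vanish almost surely as $n \to \infty$ because $T_{A}(0)$ and $T_{S}(0)$ are there assumed to have fixed distributions. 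Hence the entire issue is to verify the same negligibility under condition (A1).

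First, I would invoke the stochastic-domination assumption (A1). Since $\dd{P}[T_{A}(0) > x] \le \dd{P}[\widetilde{T}_{A} > x]$ for every $x \in \dd{R}$, a standard coupling (on an enlarged stochastic basis if necessary, which does not change any weak limit) produces a version for which $T_{A}(0) \le \widetilde{T}_{A}$ a.s., and similarly $T_{S}(0) \le \widetilde{T}_{S}$ a.s. Because $\widetilde{T}_{A}$ and $\widetilde{T}_{S}$ are finite a.s.\ and their distributions do not depend on $n$, we get $n^{-1/2} \widetilde{T}_{A} \to 0$ and $n^{-1/2} \widetilde{T}_{S} \to 0$ a.s., and therefore
\begin{align*}
n^{-1/2} T_{A}(0) \to 0, \qquad n^{-1/2} T_{S}(0) \to 0, \qquad a.s.\text{ as } n \to \infty.
\end{align*}
This is exactly the property that the proof of \cite{AtarMiya2025} exploits.

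Second, I would check that no other step of the proof of Theorem 2.1 of \cite{AtarMiya2025} depends on the specific law of $T_{A}(0)$ or $T_{S}(0)$. The residual-time terms $n^{-1/2} R_{A}(nt)$ and $n^{-1/2} R_{S}(nt)$ are uniformly bounded on compact $t$-intervals by $n^{-1/2}$ times the maximum inter-arrival time, whose negligibility follows from the finite variance of $T_{A}$ and $T_{S}$ under (\sect{preliminaries}.b) independently of the initial delay. The martingale terms $n^{-1/2} M_{A}(n\cdot)$ and $n^{-1/2} M_{S}(n\cdot)$ depend only on $\{T_{A}(j), T_{S}(j); j \ge 1\}$, which by assumption (A1) are independent of the initial delays, so the functional CLT applied to $M_{A}$ and $M_{S}$ is untouched. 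Also, the additional hypothesis $\widehat{L}^{(n)}(0) = 0$ makes the initial queue length trivially negligible, so \eq{hLn-t} together with the speed-changed time substitutions used in \cite{AtarMiya2025} produces the same reflected diffusion limit $Z(\cdot)$ characterized by \eq{SIE-Z}.

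Third, with these two observations, I would conclude that every lemma and convergence step in the proof of Theorem 2.1 of \cite{AtarMiya2025} carries over unchanged, yielding $\widehat{X}^{(n)}(\cdot) \Rightarrow \{(Z(t),0,0); t \ge 0\}$ as $n \to \infty$. The main (mild) obstacle, which I expect to handle by the coupling above rather than by any new analytic argument, is to pass from the stochastic-domination statement in (A1) to the a.s. negligibility of $n^{-1/2} T_{A}(0)$ and $n^{-1/2} T_{S}(0)$ in the decomposition; once that bridge is crossed, the rest is a verbatim repetition of \cite{AtarMiya2025}.
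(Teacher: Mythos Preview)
Your proposal is correct and follows essentially the same approach as the paper's proof: both set up the Daley--Miyazawa decomposition with the time changes $U^{(n)}$ and $V^{(n)}$, observe that the only place the laws of $T_{A}(0),T_{S}(0)$ enter is through the terms $n^{-1/2}T_{A}(0)$ and $n^{-1/2}T_{S}(0)$, and then invoke (A1) to make these terms negligible so that the remainder of the argument in \cite{AtarMiya2025} applies verbatim. Your coupling justification for passing from stochastic domination to a.s.\ negligibility is a bit more explicit than the paper's one-line appeal to (A1), but the strategy is identical.
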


\begin{proof}[Outline of the proof of \lem{th21}]
We decompose $\widehat{N}^{(n)}_{e}(\cdot)$ and $\widehat{N}^{(n)}_{d}(\cdot)$ similarly to \eq{DM-decomposition}. To this end, define
\begin{align*}
 & U^{(n)}(t) = \sum_{i = 1}^{K} \int_{0}^{t} \lambda^{(n)}_{i} 1(L^{(n)}(u) \in S^{(n)}_{i}) du\\
 & V^{(n)}(t) = \sum_{i = 1}^{K} \int_{0}^{t} \mu^{(n)}_{i} 1(L^{(n)}(u) \in S^{(n)}_{i}) du - \int_{0}^{t} \mu^{(n)}_{1} 1(L^{(n)}(u) = 0) du,
\end{align*}
then $N^{(n)}_{e}(t)$ and $N^{(n)}_{d}(t)$ can be written as
\begin{align*}
  \widehat{N}^{(n)}_{e}(t) = n^{-1/2} A\left(U^{(n)}(nt)\right), \qquad \widehat{N}^{(n)}_{d}(t) = n^{-1/2} S\left(V^{(n)}(nt)\right).
\end{align*}
Hence, it follows from \eq{DM-decomposition}, we have
\begin{align*}
  \widehat{N}^{(n)}_{e}(t) = n^{-1/2} \left(U^{(n)}(nt) + R_{A}(U^{(n)}(nt)) - T_{A}(0) + M_{A}(U^{(n)}(nt))\right),\\
  \widehat{N}^{(n)}_{d}(t) = n^{-1/2} \left(V^{(n)}(nt) + R_{S}(V^{(n)}(nt)) - T_{S}(0) + M_{S}(V^{(n)}(nt))\right).
\end{align*}
Substituting these expressions into the diffusion scaled version of \eq{Ln-t}, we have
\begin{align}
\label{eq:tLn-t}
  \widehat{L}^{(n)}(t) & = \widehat{L}^{(n)}(0)  - n^{-1/2} (T_{A}(0) - T_{S}(0)) + \sum_{i = 1}^{K} \int_{0}^{t} (b_{i} + o(1)) 1(L^{(n)}(nu) \in S^{(n)}_{i}) du \nonumber\\
  & \quad + n^{-1/2} (R_{A}(U^{(n)}(nt)) - R_{S}(U^{(n)}(nt))) + \widehat{M}^{(n)}(t),
\end{align}
where
\begin{align*}
  \widehat{M}^{(n)}(t) = n^{-1/2} \left( M_{A}(U(nt)) + M_{S}(V(nt)) \right),
\end{align*}
which is shown to be a martingale with respect to an appropriately chosen filtration by Lemma 3.1 of \cite{AtarMiya2025}. Since $n^{-1/2} (T_{A}(0) - T_{S}(0))$ is negligible as $n \to \infty$ by the condition (A1), assuming $\widehat{L}^{(n)}(0) = 0$, it is shown in Section 4 of \cite{AtarMiya2025} that $\widehat{X}^{(n)}(\cdot)$ weakly converges to $\{(Z(t),0,0); t \ge 0\}$ as $n \to \infty$.
\end{proof}

\begin{proof}[Outline of the proof of \lem{th1}]
Assume that $\widehat{X}^{(n)}(\cdot)$ is a stationary process, and we aim to apply \lem{th21}. In this case, the initial delays $T_{A}(0)$ and $T_{S}(0)$ depend on the system index $n$, so we need to verify the condition (A1). For this, note that
\begin{align*}
  R^{(n)}_{e}(t) = R_{A}(U^{(n)}(t)), \qquad R^{(n)}_{d}(t) = R_{S}(V^{(n)}(t)), \qquad t \ge 0,
\end{align*}
and $R^{(n)}_{e}(\cdot)$ and $R^{(n)}_{d}(\cdot)$ are stationary processes. This implies that 
\begin{align*}
  R^{(n)}_{e}(0) = R_{A}(U^{(n)}(0)) = T_{A}(0).
\end{align*}
Hence, we need to choose the distribution of $T_{A}(0)$ to be the stationary distribution of $R^{(n)}_{e}(\cdot)$, which indeed depends on the index $n$. However, by \lem{MLQ-moment}, 
\begin{align}
\label{eq:Rne bound}
  \dd{P}[R^{(n)}_{e}(0) > x] = \dd{P}[R^{(n)}_{e} > x] \le C_{e} \dd{E}[(T_{A}-x) 1(T_{A} > x)], \qquad x \ge 0,
\end{align}
where $C_{e} = \sup_{n \ge 1} (\min_{i \in J_{K}} \lambda^{(n)}_{i})^{-1} \alpha^{(n)}_{e}$, which is finite by (\sect{preliminaries}.c) and \cor{alpha 2}. Note that the right-hand side of \eq{Rne bound} is non-increasing in $x \ge 0$. Hence, let $\widetilde{T}_{A}$ be a random variable subject to the distribution determined by
\begin{align}
\label{eq:tTA-x}
  \dd{P}[\widetilde{T}_{A} > x] = 1 \wedge (C_{e} \dd{E}[(T_{A}-x) 1(T_{A} > x)]), \qquad x \ge 0.
\end{align}
Then, $T_{A}(0) (= R^{(n)}_{e}(0))$ is stochastically bounded uniformly in $n \ge 1$ by the random variable $\widetilde{T}_{A}$. Similarly, $\widetilde{T}_{S}$ can be found for $T_{S}(0)$.

Thus, the condition (A1) is satisfied, and it follows from \lem{th21} that $\{\widehat{L}^{(n)}(t) - \widehat{L}^{(n)}(0)  - n^{-1/2} (T_{A}(0) - T_{S}(0)); t \ge 0 \}$ weakly converges to $Z(\cdot)$ of \eq{SIE-Z} with $Z(0)=0$. Hence, it can be proved that, if the distributions of $\widehat{L}^{(n)}(0)$ for $n \ge 1$ is tight, then $\widehat{X}^{(n)}(\cdot) \Rightarrow \{(Z(t),0,0); t \ge 0\}$. It remains to prove that $Z(\cdot)$ is a stationary process. This easily follows from the fact that
Thus, the proof of \lem{th1} is completed.
\end{proof}

\begin{remark}
\label{rem:th21}
$\widetilde{T}_{A}$ and $\widetilde{T}_{S}$ of \lem{th21} do not need to have finite means, but we can see these finiteness. Namely, let $x_{0} = \inf\{x \ge 0; C_{e} \dd{E}[(T_{A}-x) 1(T_{A} > x)] \ge 1\}$, then it follows from \eq{tTA-x} that
\begin{align*}
  \dd{E}[\widetilde{T}_{A}] \le x_{0} + C_{e} \int_{0}^{\infty} \dd{E}[(T_{A}-x) 1(T_{A} > x)] dx = x_{0} + \frac {C_{e}} 2 \dd{E}[T_{A}^{2}] < \infty,
\end{align*}
because $\sigma_{A}^{2} < \infty$. Similarly, $\dd{E}[\widetilde{T}_{A}] < \infty$.
\end{remark}

\subsection*{Acknowledgement} We are grateful to Rami Atar for discussions about \lem{th1}. Yutaka Sakuma is supported by JSPS KAKENHI Grant Number JP22K11927.

%\bibliography{../../texmf/bib/dai20230708M2}

\begin{thebibliography}{99}
\expandafter\ifx\csname natexlab\endcsname\relax\def\natexlab#1{#1}\fi
\expandafter\ifx\csname url\endcsname\relax
  \def\url#1{\texttt{#1}}\fi
\expandafter\ifx\csname urlprefix\endcsname\relax\def\urlprefix{URL }\fi
\providecommand{\eprint}[2][]{\url{#2}}

\bibitem[{Atar and Miyazawa(2025)}]{AtarMiya2025}
\textsc{Atar, R.} and \textsc{Miyazawa, M.} (2025).
\newblock Heavy traffic limit with discontinuous coefficients via a
  non-standard semimartingale decomposition.
\newblock Research report, Technion - Israel Institute of Technology and Tokyo
  University of Science.
\newblock \urlprefix\url{https://arxiv.org/pdf/2502.16467}.

\bibitem[{Billingsley(2000)}]{Bill2000}
\textsc{Billingsley, P.} (2000).
\newblock \textit{Probability and measure}.
\newblock 2nd ed. Wiley.

\bibitem[{Braverman et~al.(2017)Braverman, Dai and Miyazawa}]{BravDaiMiya2017}
\textsc{Braverman, A.}, \textsc{Dai, J.} and \textsc{Miyazawa, M.} (2017).
\newblock Heavy traffic approximation for the stationary distribution of a
  generalized {Jackson} network: the {BAR} approach.
\newblock \textit{Stochastic Systems}, \textbf{7} 143--196.

\bibitem[{Braverman et~al.(2024)Braverman, Dai and Miyazawa}]{BravDaiMiya2024}
\textsc{Braverman, A.}, \textsc{Dai, J.} and \textsc{Miyazawa, M.} (2024).
\newblock The {BAR}-approach for multiclass queueing networks with {SBP}
  service policies.
\newblock \textit{Stochastic Systems}.
\newblock Published online in Articles in Advance,
  \urlprefix\url{https://doi.org/10.1287/stsy.2023.0011}.

\bibitem[{Budhiraja and Lee(2009)}]{BudhLee2009}
\textsc{Budhiraja, A.} and \textsc{Lee, C.} (2009).
\newblock Stationary distribution convergence for generalized {J}ackson
  networks in heavy traffic.
\newblock \textit{Math. Oper. Res.}, \textbf{34} 45--56.
\newblock \urlprefix\url{http://dx.doi.org/10.1287/moor.1080.0353}.

\bibitem[{Dai and Meyn(1995)}]{DaiMeyn1995}
\textsc{Dai, J.~G.} and \textsc{Meyn, S.~P.} (1995).
\newblock Stability and convergence of moments for multiclass queueing networks
  via fluid limit models.
\newblock \textit{IEEE Transactions on Automatic Control}, \textbf{40}
  1889--1904.

\bibitem[{Daley and Miyazawa(2019)}]{DaleMiya2019}
\textsc{Daley, D.~J.} and \textsc{Miyazawa, M.} (2019).
\newblock A martingale view of {Blackwell's} renewal theorem and its extensions
  to a general counting process.
\newblock \textit{Journal of Applied Probability}, \textbf{56} 602--623.

\bibitem[{Gamarnik and Zeevi(2006)}]{GamaZeev2006}
\textsc{Gamarnik, D.} and \textsc{Zeevi, A.} (2006).
\newblock Validity of heavy traffic steady-state approximation in generalized
  {J}ackson networks.
\newblock \textit{Ann. Appl. Probab.}, \textbf{16} 56--90.
\newblock \urlprefix\url{http://dx.doi.org/10.1214/105051605000000638}.

\bibitem[{Miyazawa(1994)}]{Miya1994}
\textsc{Miyazawa, M.} (1994).
\newblock Rate conservation laws: a survey.
\newblock \textit{Queueing Systems}, \textbf{15} 1--58.

\bibitem[{Miyazawa(2017)}]{Miya2017}
\textsc{Miyazawa, M.} (2017).
\newblock A unified approach for large queue asymptotics in a heterogeneous
  multiserver queue.
\newblock \textit{Advances in Applied Probability}, \textbf{49} 182--220.
\newblock \urlprefix\url{http://dx.doi.org/10.1017/apr.2016.84}.

\bibitem[{Miyazawa(2024{\natexlab{a}})}]{Miya2024b}
\textsc{Miyazawa, M.} (2024{\natexlab{a}}).
\newblock Multi-level reflecting {Brownian} motion on the half line and its
  stationary distribution.
\newblock \textit{Journal of the Indian Society for Probability and
  Statistics}, \textbf{25} 543--574.
\newblock \urlprefix\url{https://arxiv.org/abs/2405.16764}.

\bibitem[{Miyazawa(2024{\natexlab{b}})}]{Miya2024}
\textsc{Miyazawa, M.} (2024{\natexlab{b}}).
\newblock Palm problems arising in bar approach and its applications.
\newblock \textit{{Queueing Systems}}, \textbf{108} 253--273.
\newblock \urlprefix\url{https://arxiv.org/abs/2308.03553}.

\bibitem[{Miyazawa(2025)}]{Miya2025}
\textsc{Miyazawa, M.} (2025).
\newblock Diffusion approximation of the stationary distribution of a two-level
  single server queue.
\newblock \textit{Advances in Applied Probability (to appear)}, \textbf{57}.
\newblock \urlprefix\url{https://arxiv.org/abs/2312.11284}.

\end{thebibliography}
%\end{document}

\def\cprime{$'$} \def\cprime{$'$} \def\cprime{$'$} \def\cprime{$'$}
  \def\cprime{$'$} \def\cprime{$'$} \def\cprime{$'$}

\end{document}